\theoremstyle{plain}
\newtheorem{theorem}{Theorem}[section]
\theoremstyle{remark}
\theoremstyle{plain}
\newtheorem{corollary}[theorem]{Corollary}
\newtheorem{lemma}[theorem]{Lemma}
\newtheorem{proposition}[theorem]{Proposition}
\newtheorem{definition}[theorem]{Definition}
\newtheorem{claim}{Claim}
\numberwithin{equation}{section}
\newcommand{\R}{\mathbb{R}}
\newcommand{\N}{\mathbb{N}}
\newcounter{probcount}
\newcommand{\Problem}{\addtocounter{probcount}{1}
                       {\bf Problem \arabic{probcount}. }}
\begin{document}
\title{Large structures made of nowhere $L^q$ functions}

\author[ Sz. G\l \c ab]{ Szymon G\l \c ab}
\address{Institute of Mathematics, Technical University of
\L\'od\'z, W\'olcza\'nska 215, 93-005 \L\'od\'z,   Poland}
\email{szymon.glab@p.lodz.pl}
\thanks{The first named author was supported by the Polish Ministry of Science and Higher Education Grant No. N N201 414939 (2010-2013).}

\author[P. L. Kaufmann]{ Pedro L. Kaufmann }
\address{Instituto de matem\'atica e estat\'istica, Universidade de S\~ao Paulo, Rua do Mat\~ao, 1010, CEP 05508-900, S\~ao Paulo,  Brazil}
\email{plkaufmann@gmail.com}
\thanks{The second named author was supported by CAPES, Research Grant PNPD 2256-2009, and by the Institut de Math\'ematiques de Jussieu.}

\author[L. Pellegrini]{ Leonardo Pellegrini}
\address{Instituto de matem\'atica e estat\'istica, Universidade de S\~ao Paulo, Rua do Mat\~ao, 1010, CEP 05508-900, S\~ao Paulo,  Brazil}
\email{leonardo@ime.usp.br}

\subjclass[2010]{46G12, 15A03}
\keywords{nowhere $L^q$ functions, spaceability, algebrability}

\begin{abstract}
We say that a real-valued function $f$ defined on a positive Borel measure space $(X,\mu)$ is nowhere $q$-integrable if, for each nonvoid open subset $U$ of $X$, the restriction $f|_U$ is not in $L^q(U)$. When $(X,\mu)$ satisfies some natural properties, we show that certain sets of functions defined in $X$ which are $p$-integrable for some $p$'s but nowhere $q$-integrable for some other $q$'s ($0<p,q<\infty$) admit a variety of large linear and algebraic structures within them. 
The presented results answer a question from Bernal-Gonz\'alez,  improve and complement recent spaceability and algebrability results from several authors and motivates new research directions in the field of spaceability.

\end{abstract}
\maketitle

\section{Introduction}

This work is a contribution to the study of large linear and algebraic structures within essentially nonlinear sets of functions which satisfy special properties; the presence of such structures is often described using the terminology \emph{lineable}, \emph{algebrable} and \emph{spaceable}.  Recall that a subset $S$ of a topological vector space $V$ is said to be \emph{lineable} (respectively, \emph{spaceable}) if $S\cup\{0\}$ contains an infinite dimensional vector subspace (respectively, a \emph{closed} infinite dimensional  vector subspace) of $V$. Though results in this field date back to the sixties\footnote{In \cite{gurrus}, Gurariy showed that there  exists in $C([0,1])$ a closed infinite-dimensional subspace consisting, except for the null function, only on nowhere differentiable functions - see also \cite{gur} for a version in english.}, this terminology was not introduced until recently: it first appeared in unpublished notes by Enflo and Gurariy and was firstly published in \cite{ags}. We should mention that Enflo's and Gurariy's unpublished notes were completed in collaboration with Seoane-Sep\'ulveda and will finally be published in \cite{egs}. It is current to say also that $S$ is \emph{dense-lineable} if $S\cup\{0\}$ contains an \emph{dense} infinite dimensional vector subspace of $V$. The adjective \emph{maximal} is often added to \emph{dense-lineable} or \emph{spaceable} when the corresponding space contained in $S\cup\{0\}$ has the same dimension of $V$. We propose in Section \ref{secvsp} a notion of spaceability which is more restrictive than the ``maximal'' spaceability in terms of dimension. For this reason, we choose to use the notation \emph{maximal-dimension spaceable}, \emph{maximal-dimension lineable} and so on when the maximality concerns dimension of the subspace found in $S\cup\{0\}$. 

The term \emph{algebrability} was introduced later in \cite{as}; if $V$ is a linear algebra, $S$ is said to be $\kappa$-algebrable if $S\cup\{0\}$ contains an infinitely generated algebra, with a \emph{minimal} set of generators of cardinality $\kappa$ (see \cite{as} for details). We shall work with a strenghtened notion of $\kappa$-algebrability, namely, \emph{strong $\kappa$-algebrability}. The definition follows:
\begin{definition}
We say that a subset $S$ of an algebra $\mathcal{A}$ is \emph{strongly $\kappa$-algebrable}, where $\kappa$ is a cardinal number, if there exists a $\kappa$-generated free algebra $\mathcal{B}$ contained in $S\cup\{0\}$. 
\end{definition}

We recall that, for a  cardinal number $\kappa$, to say that an algebra $\mathcal{A}$ is a \emph{$\kappa$-generated free algebra}, means that there exists a subset $Z=\{z_\alpha:\alpha<\kappa\}\subset \mathcal{A}$ such that any function $f$ from $Z$ into some algebra $\mathcal{A'}$ can be uniquely extended to a homomorphism from $\mathcal{A}$ into $\mathcal{A'}$. The set $Z$ is called a \emph{set of free generators} of the algebra $\mathcal{A}$. If $Z$ is a set of free generators of some subalgebra $\mathcal{B} \subset \mathcal{A}$, we say that $Z$ is a set of free generators \emph{in} the subalgebra $\mathcal{A}$. If $\mathcal{A}$ is \emph{commutative}, a subset $Z=\{z_\alpha:\alpha<\kappa\} \subset \mathcal{A}$ is a set of free generators in $\mathcal{A}$ if for each polynomial $P$ and for any $z_{\alpha_1},z_{\alpha_2},\dots,z_{\alpha_n} \in Z$ we have 
$$
P(z_{\alpha_1},z_{\alpha_2},\dots,z_{\alpha_n})=0 \mbox{ if and only if } P=0. 
$$
The definition of strong $\kappa$-algebrability was introduced in \cite{bsz}, though in several papers, sets which are shown to be algebrable are in fact strongly algebrable, and that is seen clearly by the proofs. See \cite{as}, \cite{bq} and \cite{pathos}, among others. 
Strong algebrability is in effect a stronger condition than algebrability: for example, $c_{00}$ is $\omega$-algebrable in $c_0$ but it is not strongly $1$-algebrable (see \cite{bsz}).


\subsection{Results on large structures of non-integrable functions: recent and new}

Our object of study will be the quasi-Banach spaces $L^p(X,\mathcal{M},\mu)$. For a clear notation, when there cannot be any confusion or ambiguity, we shall write $L^p$, $L^p(X,\mu)$ or $L^p(X)$ instead of $L^p(X,\mathcal{M},\mu)$. Our main focus will be on functions which are $p$-integrable but not $q$-integrable, for some $0<p,q\leq\infty$, and specially on functions which are $p$-integrable but \emph{nowhere} $q$-integrable. The notion of nowhere-$q$-integrability we consider is connected to open sets: 

\begin{definition}

Let $0<q\leq\infty$. A scalar-valued function $f$ defined on a Borel measure space $X$ is said to be \emph{nowhere $q$-integrable} (or \emph{nowhere $L^q$}) if, for each nonvoid open subset $U$ of $X$, the restriction $f|_U$ is not in $L^q(U)$.

\end{definition}

In our context it would be pointless to substitute ``for each nonvoid open subset $U$ of $X$'' by ``for each Borel subset $U$ of positive measure of $X$'' in the definition above; the reason is that, if $0<p,q\leq \infty$ and $f\in L^p(X)$, there is always a Borel subset of $X$ with positive measure and on which $f$ is $q$-integrable. This follows from a simple argument (see e.g. the final remarks in \cite{b-g}). Of course, not all Borel measure spaces $(X,\mu)$ admit $L^p$-nowhere-$L^q$ functions, but there is a large class of such spaces which admit plenty of such functions, as we will see.  \\




Let us start by mentioning some recent results and open questions on large structures within sets of functions which are $p$-integrable but not $q$-integrable. For a survey on the evolution of the results in this direction, we recommend \cite{bcfps}.

\begin{theorem}[Bernal-Gonz\'alez, Ord\'o\~nez Cabrera \cite{bc}]

Let $(X,\mathcal{M},\mu)$ be a measure space, and consider the conditions

($\alpha$) $\inf\{\mu(A):A\in \mathcal{M}, \mu(A)>0\}=0$, and

($\beta$) $\sup\{\mu(A):A\in \mathcal{M}, \mu(A)<\infty\}=\infty$. \\
Then the following assertions hold: 
\begin{enumerate}
\item if $1\leq p <\infty$, then $L^p\setminus \cup_{q>p}L^q$ is spaceable if and only if ($\alpha$) holds;
\item if $1<  p \leq\infty$, then $L^p\setminus \cup_{q<p}L^q$ is spaceable if and only if ($\beta$) holds;
\item if $1< p <\infty$, then $L^p\setminus \cup_{q\neq p}L^q$ is spaceable if and only if both ($\alpha$) and ($\beta$) hold; 
\item if $1< p <\infty$ and $L^p$ is separable, then $L^p\setminus \cup_{q< p}L^q$ is maximal-dimension dense-lineable if and only if ($\beta$) holds. 

\end{enumerate}

\label{frankenstein}
\end{theorem}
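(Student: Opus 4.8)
The plan is to prove the four assertions by separating in each case the easy \emph{necessity} direction from the constructive \emph{sufficiency} direction, routing every construction through one device: disjointly supported ``bad blocks'' whose closed span lands inside the prescribed set. For necessity I would argue by contraposition, showing that when the relevant condition fails the set is in fact empty. If ($\alpha$) fails, then $m:=\inf\{\mu(A):\mu(A)>0\}>0$ forces $\mu$ to be purely atomic with every atom of measure at least $m$; for $f\in L^p$ each atomic value then satisfies $|f(a)|^p\le\|f\|_p^p/m$, so $|f|$ is bounded and $\|f\|_q^q\le\|f\|_\infty^{\,q-p}\|f\|_p^p<\infty$ for every $q>p$, whence $L^p\setminus\bigcup_{q>p}L^q=\varnothing$. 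Dually, if ($\beta$) fails then $\sup\{\mu(A):\mu(A)<\infty\}<\infty$ forces the space to have finite total measure, and H\"older's inequality gives $L^p\subseteq L^q$ for every $q<p$, so $L^p\setminus\bigcup_{q<p}L^q=\varnothing$. These two facts immediately yield the ``only if'' halves of (1), (2), and (4), and of (3) as well, since the failure of either condition empties the set.

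For sufficiency I would first build, on a single support, one function that is $p$-integrable but nowhere globally $q$-integrable on the relevant side. Using ($\alpha$) I choose disjoint sets $B_k$ with $\mu(B_k)=\delta_k\downarrow 0$ and heights $c_k\uparrow\infty$ calibrated so that $\sum c_k^p\delta_k<\infty$ while $\sum c_k^q\delta_k=\infty$ for \emph{every} $q>p$ simultaneously; using ($\beta$) I choose disjoint $B_k$ of large finite measure and small heights so that $\sum c_k^p\delta_k<\infty$ while $\sum c_k^q\delta_k=\infty$ for every $q<p$; for (3) I superpose both effects on disjoint regions. The passage to spaceability is then the standard disjoint-support step: an exhaustion argument from ($\alpha$), resp.\ ($\beta$), produces countably many pairwise disjoint supports each carrying such a block $f_n$, normalized in $L^p$. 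Since the $f_n$ are disjointly supported, $\overline{\mathrm{span}}\{f_n\}$ is isometric to $\ell^p$, and for any $f=\sum c_nf_n$ with some $c_{n_0}\neq 0$ the restriction to $\mathrm{supp}\,f_{n_0}$ equals $c_{n_0}f_{n_0}$, forcing $\|f\|_q=\infty$ on the bad side while $f\in L^p$; hence $\overline{\mathrm{span}}\{f_n\}\setminus\{0\}$ lies in the target set, giving the ``if'' halves of (1)--(3).

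For (4) I would upgrade the lineability of (2) to maximal-dimension dense-lineability. Separability gives $\dim L^p=\mathfrak{c}$, so the closed $\ell^p$-copy $W$ above is already a maximal-dimension subspace inside $A\cup\{0\}$, where $A:=L^p\setminus\bigcup_{q<p}L^q$. The structural point is that $B:=L^p\cap\bigcap_{q<p}L^q$ is a dense subspace with $A+B\subseteq A$ and $A\cap B=\varnothing$: it is dense since finitely supported simple functions lie in it, and $A+B\subseteq A$ because $a+b\in L^q$ together with $b\in L^q$ would force $a\in L^q$. Fixing a Hamel basis of $W$, reserving a countable subfamily $\{u_n\}$ with $\|u_n\|\to 0$ and choosing a dense sequence $\{b_n\}\subseteq B$, I set $v_n:=u_n+b_n$ and let $V_0$ be the span of the unused basis vectors together with the $v_n$. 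Every nonzero element of $V_0$ then has the form $w'+b'$ with $0\neq w'\in W$ and $b'\in B$, hence lies in $A+B\subseteq A$; meanwhile $V_0\supseteq\{v_n\}$ approximates every point (as $\{b_n\}$ is dense and $\|u_n\|\to 0$), so $V_0$ is a dense subspace of dimension $\mathfrak{c}$ contained in $A\cup\{0\}$.

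I expect the main obstacle to be the calibration in the single-block construction: selecting the pairs $(c_k,\delta_k)$ so that the critical $p$-series converges while \emph{every} $q$-series on the bad side of $p$ diverges at once is the borderline tuning at the technical heart of the argument, and it must be performed using only ($\alpha$)/($\beta$), together with the exhaustion that extracts infinitely many disjoint copies of it. Once this is in hand, the disjoint-support spaceability step and the $A+B\subseteq A$ dense-lineability step are essentially formal.
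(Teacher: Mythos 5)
First, a framing remark: the paper does not actually prove Theorem \ref{frankenstein} --- it is quoted from \cite{bc} as background --- so your proposal can only be judged on its own terms; note, though, that your sufficiency blocks are essentially the construction the paper itself uses in Lemmas \ref{hA} and \ref{lemmapart2} for Theorem \ref{mainth}(a)--(c). The genuine gap is in your necessity argument for ($\beta$). The claim that failure of ($\beta$) ``forces the space to have finite total measure'' is false: a measure space can contain a purely infinite part in which every measurable set has measure $0$ or $\infty$. Take $X=\N$ with $\mu(A)=\infty$ for every nonempty $A$: then $\sup\{\mu(A):\mu(A)<\infty\}=0$, so ($\beta$) fails, yet $\mu(X)=\infty$. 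For $1<p<\infty$ your conclusion can be repaired: with $M\doteq\sup\{\mu(A):\mu(A)<\infty\}<\infty$, an increasing union gives $A_0$ with $\mu(A_0)=M$, every finite-measure set lies in $A_0$ up to a null set, and the support of any $f\in L^p$ is $\sigma$-finite, hence contained in $A_0$ mod null with measure at most $M$; only then does H\"older give $f\in L^q$ for $q<p$. But at the endpoint $p=\infty$, which item (2) includes, the contraposition collapses entirely: in the example above $L^\infty=\ell^\infty$ while $L^q=\{0\}$ for all $q<\infty$, so $L^\infty\setminus\bigcup_{q<\infty}L^q=\ell^\infty\setminus\{0\}$ is nonempty --- and even spaceable --- although ($\beta$) fails. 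So the set is \emph{not} empty when ($\beta$) fails, and the $p=\infty$ case of (2) cannot be settled by your emptiness argument at all; it requires a separate analysis of the purely infinite part (or a semifiniteness-type hypothesis), which your proposal does not supply. Your necessity argument for ($\alpha$) is fine in substance, though the detour through ``purely atomic'' is unnecessary: Chebyshev directly gives $\mu(\{|f|>t\})\geq m$ or $=0$, hence $\|f\|_\infty\leq(\|f\|_p^p/m)^{1/p}$, and interpolation finishes.

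Two smaller points on the sufficiency side. You cannot ``choose disjoint sets $B_k$ with $\mu(B_k)=\delta_k$'': condition ($\alpha$) gives no control over exact values (the measure may be atomic, e.g.\ atoms of sizes $2^{-n}$), and sets of small measure need not be disjoint. The standard fix is to pick $A_k$ inductively with $\mu(A_{k+1})<\mu(A_k)/4$, disjointify via $B_k\doteq A_k\setminus\bigcup_{j>k}A_j$, and then solve the heights from the \emph{actual} measures, e.g.\ $c_k^p\mu(B_k)=1/(k\log^2 k)$; since $\mu(B_k)\to0$ geometrically this forces $c_k\to\infty$, giving convergence at $p$ and divergence at every $q>p$ simultaneously, so your single-sequence calibration does survive once stated with inequalities rather than prescribed measures --- the paper's Lemma \ref{hA} instead uses a double-indexed scheme with exponents $r_n\downarrow p$, and Lemma \ref{lemmapart2} is the mirror on the ($\beta$) side; both routes are fine. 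Also, for $p=\infty$ the closed span of disjointly supported norm-one blocks in $L^\infty$ is isometric to $c_0$, not to $\ell^p$ (harmless for spaceability, but your blanket ``isometric to $\ell^p$'' claim and your height/measure calibration are written only for finite $p$). Your upgrade to (4) --- perturbing a Hamel basis of the closed $\ell^p$-copy $W$ by a dense sequence in $B\doteq L^p\cap\bigcap_{q<p}L^q$, using $A+B\subseteq A$ and $\dim L^p=\mathfrak{c}$ for separable $L^p$ --- is correct and is the standard dense-lineability technique.
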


Note that any of the conditions ($\alpha$), or ($\beta$), or ($\alpha$) \emph{and} ($\beta$) is enough to guarantee the existence of nowhere $q$-integrable functions in $L^p$ (just note that, if $X$ contains an open singleton $\{x\}$ of positive measure, then each function from $L^p(x)$ is $q$-integrable in $\{x\}$, for all $q$). Bernal-Gonz\'alez et. al. use the convenient terminology \emph{`(left, right) strict order integrability'} when a function is $p$-integrable but not $q$-integrable for $q\neq p$ ($q<p$, $q>p$). We refer to \cite{bcfps} for improvements on item (2) of Theorem (\ref{frankenstein}) above. 
And in \cite{bfps} there is a version of that same item which includes quasi-Banach spaces:

\begin{theorem}[Botelho, F\'avaro, Pellegrino, Seoane-Sep\'ulveda \cite{bfps}]

$L^p[0,1]\setminus\cup_{q>p}L^q[0,1]$ is spaceable for every $p>0$. 

\label{quasi01}
\end{theorem}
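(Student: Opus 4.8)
The plan is to realize a closed isometric copy of $\ell^p$ inside $L^p[0,1]$ all of whose nonzero elements fail to belong to any $L^q$ with $q>p$. The key structural observation is that, for functions with pairwise disjoint supports, the quantity $\int|\cdot|^p$ is additive, so if I can manufacture a single ``bad'' building block on each piece of a partition of $[0,1]$, then any superposition of such blocks inherits the bad behaviour on at least one piece.

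First I would fix a sequence of pairwise disjoint subintervals $(I_n)_{n\ge 1}$ of $[0,1]$, say $I_n=[2^{-n},2^{-n+1})$, and on each of them place a function with a single integrable-but-barely singularity. Concretely, fix $\alpha>1/p$ and consider the model $\phi(x)=x^{-1/p}\big(\log(e/x)\big)^{-\alpha}$ on $(0,1)$. A substitution $u=\log(e/x)$ shows that $\int_0^\delta \phi^p<\infty$ precisely because $\alpha p>1$, so $\phi\in L^p$ near $0$; on the other hand, for any $q>p$ the exponent $-q/p<-1$ makes $x^{-q/p}$ non-integrable at $0$ and the slowly varying logarithmic factor cannot repair this, so $\int_0^\delta \phi^q=\infty$ for every $q>p$. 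Translating and rescaling $\phi$ to put its singularity at the left endpoint of $I_n$, and then normalizing, I obtain $g_n$ supported on $I_n$ with $\int_{I_n}|g_n|^p=1$ and $g_n\notin L^q(I_n)$ for all $q>p$.

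Next I would define $T:\ell^p\to L^p[0,1]$ by $T\big((a_n)\big)=\sum_n a_n g_n$. Because the supports are disjoint, $\int\big|\sum_n a_n g_n\big|^p=\sum_n |a_n|^p\int_{I_n}|g_n|^p=\sum_n|a_n|^p$, which simultaneously shows that the series converges in $L^p$ exactly when $(a_n)\in\ell^p$ and that $T$ is isometric for the metric $d(f,g)=\int|f-g|^p$; this metric is available for every $p>0$, which is what makes the argument uniform across the quasi-Banach range $0<p<1$ and the Banach range $p\ge 1$. Consequently the range $Y=T(\ell^p)$ is a complete, hence closed, infinite-dimensional subspace of $L^p[0,1]$.

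Finally, for a nonzero $f=\sum_n a_n g_n\in Y$ I would pick $n_0$ with $a_{n_0}\neq 0$; on $I_{n_0}$ disjointness forces $f=a_{n_0}g_{n_0}$, so $\int_{I_{n_0}}|f|^q=|a_{n_0}|^q\int_{I_{n_0}}|g_{n_0}|^q=\infty$ for every $q>p$, whence $f\in L^p[0,1]\setminus\bigcup_{q>p}L^q[0,1]$. Thus $Y\setminus\{0\}$ lies entirely in the target set and the set is spaceable. I expect the only genuinely delicate point to be the design of the building block: one must secure failure of $L^q$-integrability for \emph{all} $q>p$ at once while preserving $L^p$-integrability, and the logarithmic correction to the critical power $x^{-1/p}$ is exactly what achieves this.
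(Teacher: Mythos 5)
Your proposal is correct, and it proves the stated theorem. Its global architecture coincides with the one this paper uses: disjointly supported norm-one blocks, additivity of $\int|\cdot|^p$ over disjoint supports, an isometric copy of $\ell_p$ whose range is complete and hence closed, all handled uniformly in $0<p<1$ and $p\geq 1$ via the metric $d(f,g)=\int|f-g|^p$ (the paper packages this as Theorem \ref{stand} together with its remark on the quasi-Banach case). Where you genuinely diverge is in the key lemma that manufactures the bad block. You use one explicit function, $\phi(x)=x^{-1/p}\bigl(\log(e/x)\bigr)^{-\alpha}$ with $\alpha>1/p$, whose exactly-critical singularity fails $L^q$-integrability for \emph{every} $q>p$ simultaneously; this exploits the concrete order and scaling structure of Lebesgue measure on an interval. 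The paper instead argues measure-theoretically (Lemma \ref{hA}): it fixes a sequence $r_n$ strictly decreasing to $p$, builds step functions $h_n=\sum_m a_{n,m}\chi_{A_{n,m}}$ on pairwise disjoint sets with geometrically decaying measures, with coefficients tuned so that $\|h_n\|_{r_n}=\infty$ while $h_n\in L^p$ by the ratio test, and then sums $h_A=\sum_n h_n/(\|h_n\|2^n)$; since for each $q>p$ some $r_n<q$ and the pieces are disjointly supported on sets of finite measure, $h_A\notin L^q$. Your route buys explicitness and brevity --- no auxiliary sequence of exponents and no normalized diagonal sum. The paper's route buys generality: it works on an arbitrary atomless positive Borel measure space with full support, and, combined with the nowhere-dense-set construction of Lemma \ref{fact1}, it upgrades the blocks to \emph{nowhere} $L^q$ functions, giving Theorem \ref{mainth}(a), of which Theorem \ref{quasi01} is a special case. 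Your explicit $\phi$ cannot be transplanted to that abstract setting (and your copy of $\ell_p$ consists of functions that are perfectly nice away from the points $2^{-n}$, so it says nothing about nowhere integrability), but for the statement as given your argument is complete; the only points worth stating explicitly are that the affine rescaling onto $I_n$ preserves both $\|g_n\|_p=1$ after normalization and the divergence of every $\int_{I_n}|g_n|^q$, $q>p$, which is immediate.
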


When it comes to nowhere integrable functions, Bernal-Gonz\'alez gave the first initial result: 

\begin{theorem}[Bernal-Gonz\'alez \cite{b-g}]

Let $(X,\mathcal{M},\mu)$ be a measure space such that $X$ is a Hausdorff first-countable separable locally compact perfect topological space and that $\mu$ is a positive Borel measure which is continuous, regular and has full support. Let $1\leq p < \infty$.Then the set 
\begin{eqnarray}
\{ f\in L^p: f \mbox{ is nowhere $q$-integrable, for each $q>p$} \}
\label{gonzalezdense}
\end{eqnarray}
is dense in $L^p$.
\label{thgondens}
\end{theorem}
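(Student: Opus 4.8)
The plan is to prove density by a reduction to a dense, well-behaved subclass followed by an explicit perturbation. Fix $g\in L^p$ and $\varepsilon>0$; I want to produce $f$ in the set \eqref{gonzalezdense} with $\|f-g\|_p<\varepsilon$. First I would replace $g$ by a continuous, compactly supported $\tilde g$ with $\|g-\tilde g\|_p<\varepsilon/2$, which is legitimate because $X$ is locally compact Hausdorff and $\mu$ is a regular Borel measure, so $C_c(X)$ is dense in $L^p$. This step is not cosmetic: its point is that the support $K$ of $\tilde g$ is compact, hence $\mu(K)<\infty$, so $\tilde g$ is bounded and $\int_U|\tilde g|^q\,d\mu\le\|\tilde g\|_\infty^q\,\mu(K)<\infty$ for every open $U$ and every $q$. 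The strategy is then to add a nonnegative perturbation $h\in L^p$ with $\|h\|_p<\varepsilon/2$ which is itself nowhere $q$-integrable for all $q>p$, and to transfer this to $f:=\tilde g+h$. The transfer is the easy half: for any open $U$ and any $q>p>1$ one has $h^q\le 2^{q-1}(|f|^q+|\tilde g|^q)$, and since $\int_U h^q\,d\mu=\infty$ while $\int_U|\tilde g|^q\,d\mu<\infty$, necessarily $\int_U|f|^q\,d\mu=\infty$. This is exactly where compact support is used, and it explains why one cannot perturb a general $g\in L^p$ directly: its local $q$-norms could already be infinite, so the cancellation argument would fail.

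The heart of the matter is the construction of $h$. First I would extract a countable family of nonempty open sets $\{V_n\}$ forming a $\pi$-base, meaning that every nonempty open $U$ contains some $V_n$. Taking a countable dense set $\{x_m\}$ (separability) together with a countable neighborhood base at each $x_m$ (first-countability), their union is such a family: any nonempty open $U$ meets $\{x_m\}$ by density, and $U$ then contains a basic neighborhood of that point. In each $V_n$ I would build a nonnegative spike $h_n$ supported inside $V_n$ with $\|h_n\|_p<\varepsilon/2^{\,n+1}$ and $\int h_n^q\,d\mu=\infty$ for every $q>p$, and put $h:=\sum_n h_n$. Since $\sum_n\|h_n\|_p<\varepsilon/2$, the series converges in the Banach space $L^p$ to a nonnegative $h$ with $\|h\|_p<\varepsilon/2$. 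Nowhere $q$-integrability is then immediate: given a nonempty open $U$, choose $V_n\subseteq U$; because $h\ge h_n\ge 0$ and $\mathrm{supp}\,h_n\subseteq V_n$, we get $\int_U h^q\,d\mu\ge\int_{V_n}h_n^q\,d\mu=\infty$.

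To build each $h_n$, I would use local compactness to pick a relatively compact open $W_n$ with $\overline{W_n}\subseteq V_n$ and $0<\mu(W_n)<\infty$, and then exploit that $\mu$ is continuous (non-atomic): by the Sierpi\'nski intermediate-value property for non-atomic measures there is a decreasing sequence $W_n\supseteq A_1\supseteq A_2\supseteq\cdots$ of measurable sets with $\mu(A_k\setminus A_{k+1})=m_k$ for any prescribed summable sequence $m_k>0$. Setting $h_n:=\sum_k c_k\,\mathbf 1_{A_k\setminus A_{k+1}}$ with $c_k:=m_k^{-1/p}b_k$, one gets $\int h_n^p\,d\mu=\sum_k b_k^p$ and $\int h_n^q\,d\mu=\sum_k m_k^{\,1-q/p}b_k^q$. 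I would take $m_k$ decaying extremely fast (say $m_k=e^{-k^2}$) and $b_k=\delta_n k^{-2/p}$: then $\sum_k b_k^p=\delta_n^p\sum_k k^{-2}$ is finite and made $<(\varepsilon/2^{\,n+1})^p$ by shrinking $\delta_n$, while for every fixed $q>p$ the exponent $1-q/p<0$ turns $m_k^{\,1-q/p}=e^{k^2(q/p-1)}$ into a super-exponentially growing factor that overwhelms $b_k^q$, forcing $\int h_n^q\,d\mu=\infty$. The decisive feature is that the \emph{one} function $h_n$ fails $L^q$-integrability simultaneously for \emph{all} $q>p$, which is precisely what the super-fast decay of $m_k$ buys.

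The main obstacle is this last construction: one must realize the Euclidean model singularity $|x|^{-1/p}\bigl(\log(1/|x|)\bigr)^{-s}$ — which is $L^p$ but lies in no $L^q$ with $q>p$ near the origin — in a purely measure-theoretic way, and it is exactly the non-atomicity (``continuity'') of $\mu$ that permits carving the nested level sets $A_k$ with prescribed measures $m_k$; the perfectness hypothesis guarantees this is consistent (no open singleton could carry mass). Everything else — the countable $\pi$-base from separability and first-countability, the $C_c$-approximation from local compactness and regularity, and the triangle-inequality transfer from $h$ to $f$ — is routine under the stated hypotheses.
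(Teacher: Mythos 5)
Your proof is correct, but note first that the paper itself never proves Theorem \ref{thgondens}: it quotes it from \cite{b-g} and instead establishes the stronger Theorem \ref{mainth}(d) (maximal-dimension dense-lineability) in Section \ref{secdens}. Measured against that machinery, your route shares the skeleton: extract a countable $\pi$-base (your separability-plus-first-countability argument is precisely why the paper's $\pi$-base hypothesis generalizes Bernal-Gonz\'alez's setting), carve subsets of prescribed small measure inside each $\pi$-base element using atomlessness (the paper's Lemma \ref{fact2} does the same), plant a spike in $L^p\setminus\bigcup_{q>p}L^q$ on each, and sum with summable weights. You differ in two genuine ways. First, you approximate by $C_c(X)$ rather than by simple functions: the paper gets density from the span of the functions $g^{\beta,n}=\chi_{B_\beta}+\frac1n f^{\alpha(\beta,n)}$, using that a simple function plus an $S_p$-function is again in $S_p$; your compact-support reduction plays the same role (the tame part has finite local $q$-norms) and your $2^{q-1}$-inequality transfer is the same cancellation observation. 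Second, your spike is a one-shot construction: the doubly exponential measures $m_k=e^{-k^2}$ make a \emph{single} series fail $L^q$ simultaneously for all $q>p$, whereas the paper's Lemma \ref{hA} diagonalizes over a sequence $r_n\downarrow p$, summing countably many functions $h_n\notin L^{r_n}$. Your version is more elementary; the paper's heavier apparatus (nowhere dense supports from Lemma \ref{fact1}, almost disjoint families indexed by $\alpha<\mathfrak c$) buys lineability and spaceability, which you correctly recognize are not needed for mere density.

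Two standard facts should be made explicit, though neither is a substantive gap. (i) You use $\mu(K)<\infty$ for compact $K$ twice --- for $C_c(X)\subseteq L^p$ and its density, and for $\int_U|\tilde g|^q<\infty$; this finiteness on compacta is part of the usual convention for a regular Borel measure on a locally compact Hausdorff space, but it is a convention, so say so. (ii) ``Continuous'' means only that singletons are null, which is strictly weaker than non-atomic in general, and Sierpi\'nski's intermediate-value theorem needs the latter; here continuity does imply atomlessness, but via regularity: an atom contains a compact atom by inner regularity, and a compact atom $K$ all of whose points are null is impossible, since outer regularity at each point plus compactness covers $K$ by finitely many open sets of measure $<\mu(K)$, forcing $\mu(K)=0$. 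Finally, the prescribed differences must satisfy $\sum_k m_k\le\mu(W_n)$ before invoking Sierpi\'nski; a harmless rescaling of the $m_k$ (which leaves both the convergence of $\sum_k b_k^p$ and the divergence of $\sum_k m_k^{1-q/p}b_k^q$ intact) fixes this.
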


It is clear that $\mu$ having full support (that is, $\mu(U)>0$ for every nonvoid open subset $U\subset X$) is a necessary condition for the existence of nowhere $q$-integrable functions. Based on the above result, Bernal-Gonz\'alez rose the following question:\\

\Problem Is the set (\ref{gonzalezdense}) lineable/maximal-dimension lineable/dense-lineable?\\ 

It is quite natural to seek for other large structures within (\ref{gonzalezdense}). \\

The authors of this work have also presented some results on large structures of  nowhere integrable functions, and among them we mention the following: 

\begin{theorem}[G\l\c ab, Kaufmann, Pellegrini \cite{gkp}]

The set of nowhere essentially bounded functions in $L^1[0,1]$ is
\begin{enumerate}
\item spaceable and
\item strongly $\mathfrak{c}$-algebrable. 
\end{enumerate}

\label{thgkp}
\end{theorem}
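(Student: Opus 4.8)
Both parts rest on the same elementary building block: a function supported on a union of tiny ``spikes'' accumulating at every rational, whose height is unbounded near each rational but whose mass and growth we can tune at will. Fix an enumeration $\{r_k\}_{k\in\N}$ of the rationals in $[0,1]$, and for each pair $(k,n)$ choose pairwise disjoint open intervals $J_{k,n}$ having $r_k$ as accumulation point as $n\to\infty$ and with prescribed lengths $|J_{k,n}|=w_{k,n}$. A function of the form $\sum_{k,n} n\,\mathbf 1_{J_{k,n}}$ is then essentially unbounded on every neighbourhood of every $r_k$, hence nowhere essentially bounded, while its membership in $L^1$ (and the integrability of related functions) is governed solely by the choice of the weights $w_{k,n}$. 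The two parts differ only in how these spikes are distributed and weighted.

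For spaceability, the plan is to produce a sequence of such functions with \emph{pairwise disjoint} supports, each individually nowhere essentially bounded \emph{on its own support}. Concretely I would index spikes by triples $(j,k,n)$, set $h_j=\sum_{k,n} n\,\mathbf 1_{J_{j,k,n}}$ with the $J_{j,k,n}$ all pairwise disjoint and, for fixed $j$, accumulating at $r_k$ as $n\to\infty$; choosing $w_{j,k,n}=2^{-j-k}n^{-3}$ makes each $h_j\in L^1$ with $\|h_j\|_1=:c_j<\infty$. The closed linear span $Y=\overline{\operatorname{span}}\{h_j\}$ is then isometric to $\ell^1$ and consists of the functions $f=\sum_j a_j h_j$ with $\sum_j|a_j|c_j<\infty$. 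The point is that for such an $f$ with some $a_{j_0}\neq0$ one has $f=a_{j_0}h_{j_0}$ on the support $S_{j_0}$ of $h_{j_0}$, by disjointness; and since $S_{j_0}$ meets every open interval $U$ in a set on which $h_{j_0}$ attains arbitrarily large values, $\operatorname{ess\,sup}_U|f|=\infty$. Hence every nonzero element of the infinite-dimensional closed space $Y$ is nowhere essentially bounded, which is exactly spaceability.

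For strong $\mathfrak c$-algebrability I would exploit the classical ``distinct-exponents'' trick. Build a single spike function $g=\sum_{k,n} n\,\mathbf 1_{J_{k,n}}$ as above, but now with weights chosen \emph{super-exponentially small in the height}, say $w_{k,n}=2^{-k}e^{-n^2}$, so that $\int_0^1 e^{\beta g}\,dx=\sum_{k,n}w_{k,n}e^{\beta n}<\infty$ for \emph{every} $\beta>0$ while $g$ stays nowhere essentially bounded. Fix a set $A\subset(0,\infty)$ with $|A|=\mathfrak c$ that is linearly independent over $\Q$, and take as candidate free generators the family $\{e^{\alpha g}:\alpha\in A\}$. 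Given a polynomial $P$ with no constant term and distinct $\alpha_1,\dots,\alpha_n\in A$, the $\Q$-independence guarantees that $P(e^{\alpha_1 g},\dots,e^{\alpha_n g})=\sum_\mu c_\mu e^{\lambda_\mu g}$ with \emph{distinct} exponents $\lambda_\mu>0$ and coefficients $c_\mu$ coming directly from those of $P$. This finite combination lies in $L^1$ by the choice of weights; it is nonzero whenever $P\neq0$ because evaluating on the positive-measure level sets $\{g=n\}$ turns $\sum_\mu c_\mu e^{\lambda_\mu g}=0$ into an exponential polynomial in $n$ vanishing for all $n\in\N$, forcing every $c_\mu=0$ (this also yields freeness). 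Finally it is nowhere essentially bounded: on any open $U$ the value $\sum_\mu c_\mu e^{\lambda_\mu n}$ is attained on the positive-measure set $\{g=n\}\cap U$ for arbitrarily large $n$, and is asymptotically dominated by the top term $c_{\mu^\ast}e^{\lambda_{\max}n}\to\pm\infty$. Thus $\{e^{\alpha g}:\alpha\in A\}$ generates a free algebra inside the set of nowhere $L^\infty$ functions together with $\{0\}$.

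The routine parts are the $L^1$ estimates and the disjointness bookkeeping. The genuine obstacle is the tension, in the algebrability argument, between keeping $g$ nowhere essentially bounded and keeping \emph{all} monomials $e^{\beta g}$ (with $\beta$ arbitrarily large, since products of generators blow up the exponent) inside $L^1$: a naive choice such as $g\sim\log(1/|x-r_k|)$ makes $e^{\beta g}$ integrable only for small $\beta$ and fails for high-degree monomials. The super-exponential weighting $w_{k,n}=2^{-k}e^{-n^2}$ is precisely what reconciles the two demands, and pinning down such a weighting is the crux of the proof.
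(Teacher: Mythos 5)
Your overall architecture is sound and, in fact, close to the published arguments (disjointly supported generators spanning an isometric copy of $\ell_1$ for part (1); rationally independent exponents plus a dominant-term estimate for part (2), with super-fast decay of the spike measures so that \emph{all} powers of the generators remain integrable --- your $e^{-n^2}$ plays exactly the role of the paper's $1/j!$ in the proof of Theorem \ref{mainth}(e)). But there is a genuine gap at the foundation: you take the spike supports to be pairwise disjoint \emph{open intervals} $J_{k,n}$, and this is fatal for the ``nowhere'' requirement. Indeed, $U\doteq J_{k_0,n_0}$ is itself a nonvoid open set, and by disjointness no other spike meets it, so $g\equiv n_0$ on $U$: the function is essentially bounded on every spike, hence \emph{not} nowhere essentially bounded. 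The construction is moreover internally inconsistent: the interior of $J_{k_0,n_0}$ contains rationals $r_{k'}$, and the spikes $J_{k',n}$ that are supposed to accumulate at $r_{k'}$ would eventually have to lie inside $J_{k_0,n_0}$, contradicting disjointness. So either disjointness or the density of tall spikes must fail; in either case both your $h_j$ in part (1) and your $g$ in part (2) fail to belong to the target set, and the positive-measure level sets $\{g=n\}\cap U$ on which your freeness and unboundedness arguments rest are empty whenever $U$ sits inside a spike.

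The repair is precisely the device used in the paper: replace intervals by \emph{nowhere dense} Borel sets of positive measure (fat Cantor-type sets). Lemma \ref{fact1} produces inside each member $U_n$ of a countable ($\pi$-)base a nowhere dense Borel set $N_n$ with $0<\mu(N_n)<1/2^n$, and nowhere-density is exactly what makes the bookkeeping work: a finite union of nowhere dense sets is nowhere dense, so at each inductive step one can choose the next $N_n$ inside $U_n$ and disjoint from all previous ones, while no open set can be swallowed by a spike --- every nonvoid open $U$ contains some $U_n$ and hence meets spikes of every height in positive measure. Splitting each $N_n$ into pieces $N_{n,j}$ with $\mu(N_{n,j})\leq 1/(j!\,2^n)$ and setting $B_j\doteq\bigcup_n N_{n,j}$ restores all the properties you invoke: every level set $\{g=j\}\supset B_j$ meets every open set in positive measure, after which your $\ell_1$-span argument for (1) and your distinct-exponents dominance argument for (2) go through essentially verbatim --- the latter is then the same proof as the paper's for Theorem \ref{mainth}(e), with your generators $e^{\alpha g}=\sum_j e^{\alpha j}\chi_{B_j}$ (up to the value $1$ off $\bigcup_j B_j$) corresponding to the paper's $g_\alpha=\sum_j\theta_\alpha^j\chi_{B_j}$ with $\{\ln\theta_\alpha\}$ linearly independent over $\Q$.
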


In this landscape, we present a few new results which solve Problem 1 and, under quite mild conditions on the measure space where our functions are defined, complement/generalize the results mentioned above. We summarize these results in Theorem \ref{mainth} below. 

\begin{theorem}

Suppose that $X$ is a topological space admitting a countable $\pi$-base (that is, a family $(U_n)_n$ of nonvoid open subsets of $X$ such that, for each nonvoid open subset $A$ of $X$, $U_j\subset A$ for some $j$) and that $\mu$ a positive Borel measure on $X$. Let $0<p<\infty$ and consider the sets 
\begin{eqnarray*}
S_p(X)\doteq S_p \doteq \{f\in L^p: f \mbox{ is nowhere }L^q \mbox{, for each } p<q\leq\infty\},
\label{defSp}
\end{eqnarray*}
$$
S'_p\doteq S_p\setminus \cup_{0<q<p} L^q, \mbox{ and}
$$
$$
\mathcal{G} \doteq \left\{f\in \bigcap_{0<q<\infty} L^q: \mbox{ $f$ is nowhere }L^\infty\right\}.
$$
Then we have the following:
\begin{enumerate}[(a)]
\item if $\mu$ is atomless, outer regular and has full support, then $S_p\cup \{0\}$ contains a $\ell_p$-isometric subspace of $L^p$, which is in addition complemented if $p\geq 1$; 
\item if $\mu$ infinite and $\sigma$-finite, then $L^p\setminus \cup_{0<q<p}L^q$ contains a $\ell_p$-isometric subspace of $L^p$, which is in addition complemented if $p\geq 1$;
\item if $\mu$ is atomless, infinite, outer regular and has full support, then $S'_p \cup \{0\}$ contains a $\ell_p$-isometric subspace of $L^p$, which is in addition complemented if $p\geq 1$;
\item if $\mu$ is atomless, outer regular and has full support, then $S_p$ is maximal-dimension dense-lineable;
\item if $\mu$ is atomless, outer regular and has full support, then $\mathcal{G}$ is strongly $\mathfrak{c}$-algebrable. 

\end{enumerate}

\label{mainth}
\end{theorem}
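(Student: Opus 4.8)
The plan is to run everything off a single \emph{local spike lemma}. Fix a nonvoid open $U$; full support gives $\mu(U)>0$ and atomlessness lets me carve pairwise disjoint $B_1,B_2,\dots\subset U$ with prescribed measures $c_k=\mu(B_k)$, so that $f=\sum_k\lambda_k\mathbf 1_{B_k}$ satisfies $\int_U|f|^r=\sum_k\lambda_k^rc_k$ for every $r$ and the problem collapses to choosing $(\lambda_k,c_k)$. With $c_k=2^{-k}$ and $\lambda_k=2^{k/p}k^{-2/p}$ one gets $\sum_k\lambda_k^pc_k=\sum_k k^{-2}<\infty$ while $\lambda_k^qc_k=2^{k(q/p-1)}k^{-2q/p}\to\infty$ for every $q>p$: a spike that is $p$-integrable on $U$ but fails $L^q(U)$ for all $p<q\le\infty$. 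With $\lambda_k=k$ and $c_k=e^{-k^2}$ instead, $\sum_k\lambda_k^rc_k<\infty$ for every $r$ while $\sup_k\lambda_k=\infty$: a spike in $\bigcap_{0<q<\infty}L^q(U)$ that is essentially unbounded on $U$. Fixing a countable $\pi$-base $(U_n)_n$ and splitting (atomlessly) each $U_n$ into disjoint positive-measure pieces $V_{n,m}$ organized into ``columns'' $\{V_{n,m}\}_n$, any function carrying a full spike inside every $V_{j,m}$ is nowhere $L^q$ (resp. nowhere $L^\infty$) on each open $A$, since $A\supset U_j$ for some $j$.

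For (a) I would take, for each $m$, a function $g_m$ supported on the $m$-th column and equal to a scaled first-type spike in each $V_{n,m}$, normalized so $\|g_m\|_p=1$; the $g_m$ have disjoint supports, hence $\|\sum a_mg_m\|_p^p=\sum|a_m|^p$ and $\overline{\mathrm{span}}\{g_m\}\cong\ell_p$ isometrically, and for any nonzero $\sum a_mg_m$ the restriction to $V_{j,m_0}$ (with $a_{m_0}\neq0$) equals $a_{m_0}g_{m_0}$, forcing $\int_A|\sum a_mg_m|^q=\infty$, so the whole span lies in $S_p\cup\{0\}$. For (b) atomlessness is neither available nor needed: writing $X=\bigsqcup_jW_j$ with $0<\mu(W_j)<\infty$, $\sum_j\mu(W_j)=\infty$, and setting $\lambda_j^p=1/(M_j(\log M_j)^2)$ with $M_j=\sum_{i\le j}\mu(W_i)$, a routine integral comparison (after ordering so $M_j/M_{j-1}\to1$) shows $\sum_j\lambda_j\mathbf 1_{W_j}\in L^p$ but in no $L^q$ with $q<p$; partitioning $\{j\}$ into infinitely many blocks of infinite $\mu$-mass yields the disjoint normalized sequence spanning the $\ell_p$-copy. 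Item (c) superposes the two: a first-type spike part (nowhere $L^q$ for $q>p$, and in every $L^q$ with $q\le p$) plus a disjointly supported (b)-type tail (failing $L^q$ for $q<p$), disjointness giving $\int|g_m|^q=\text{finite}+\infty$ for $q<p$ and blocking cancellation. In all three, for $p\ge1$ the disjoint normalized sequence is $1$-complemented by the classical band projection $f\mapsto\sum_m\big(\int f\,|g_m|^{p-1}\mathrm{sgn}\,g_m\big)g_m$.

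For (d) the obstacle is that $S_p$ is not additively closed (two nowhere-$L^q$ functions may cancel), so I cannot merely add a dense subspace to the copy from (a). I would use \emph{two} mutually disjoint spike families. Fix a countable dense set $\{\phi_k\}$ of simple functions in $L^p$ (each lies in every $L^q$, $0<q\le\infty$, hence is harmless); here I use separability of $L^p$, which holds in the present setting (outer regularity lets a countable subfamily of open sets generate the measure algebra, and the nonseparable case is identical with a dense set of simple functions of the right cardinality). Let $\{g_m\}$ and $\{\tilde g_k\}$ be first-type spikes with all supports mutually disjoint, put $\psi_k=\phi_k+\tilde g_k$ with $\|\tilde g_k\|_p<1/k$, and set $M=\overline{\mathrm{span}}\{g_m\}+\mathrm{span}\{\psi_k\}$. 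Then $M$ is dense (it contains the dense $\{\psi_k\}$) of dimension $\dim L^p$. For $x\in M\setminus\{0\}$ write $x=G+\Phi$ with $G$ in the closed span of $\{g_m\}\cup\{\tilde g_k\}$ and $\Phi$ simple; disjointness forces $G=0\Rightarrow x=0$, so $G\neq0$ is nowhere $L^q$ while $\Phi\in L^q(U)$ for every open $U$, whence $x=G+\Phi$ is nowhere $L^q$. Thus $M\setminus\{0\}\subset S_p$ and $S_p$ is maximal-dimension dense-lineable.

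For (e) note that $\bigcap_{0<q<\infty}L^q$ is an algebra (Hölder), so any polynomial without constant term in elements of $\mathcal G$ automatically lands in $\bigcap_qL^q$; only nowhere-$L^\infty$ and freeness remain. I would build one $\varphi\ge0$ from second-type spikes in every column (so $\varphi$ is essentially unbounded on every open set) with spike measures decaying so fast that $e^{c\varphi}-1\in\bigcap_qL^q$ for all $c>0$ (here $\varphi=0$ off a finite-measure set, so no constant-function obstruction even when $\mu(X)=\infty$), and fix $\{\alpha_\gamma:\gamma<\mathfrak c\}\subset(1,2)$ linearly independent over $\mathbb{Q}$. The proposed free generators are $f_\gamma=e^{\alpha_\gamma\varphi}-1\in\mathcal G$. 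For a nonzero $P$ without constant term, expanding $P(f_{\gamma_1},\dots,f_{\gamma_n})=\sum_{\vec l}c_{\vec l}\,e^{(\sum_i l_i\alpha_{\gamma_i})\varphi}$, the $\mathbb{Q}$-independence of the $\alpha_\gamma$ makes all exponents distinct, so the term from the top monomial of $P$ survives with nonzero coefficient and strictly dominates; hence $P(f_{\gamma_1},\dots,f_{\gamma_n})\sim c\,e^{\delta\varphi}\to\pm\infty$ along the spikes of $\varphi$, which meet every open set. This gives essential unboundedness on every nonvoid open set (hence membership in $\mathcal G$ and, being nonzero, freeness), producing a $\mathfrak c$-generated free algebra inside $\mathcal G\cup\{0\}$. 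The two genuinely delicate points I expect are the cancellation-avoidance in (d), which is exactly why the two-family disjoint-support bookkeeping is needed, and the calibration in (e) making $e^{\alpha\varphi}$ at once essentially unbounded on every open set and a member of every $L^q$, with $\mathbb{Q}$-independence of the exponents simultaneously securing freeness and survival of the dominant exponential term.
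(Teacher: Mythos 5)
Your overall architecture (disjointly supported spikes placed inside every $\pi$-base element, a single spike profile handling all exponents at once, superposition for (c), perturbation of a dense set for (d), exponential generators for (e)) is essentially viable, but it rests on a step that is false as written and that is exactly where the paper does its real work. You say: ``splitting (atomlessly) each $U_n$ into disjoint positive-measure pieces $V_{n,m}$ organized into columns.'' Splitting each $U_n$ \emph{separately} does not make the pieces disjoint \emph{across} different $n$: the elements of a $\pi$-base typically overlap and nest (think of the dyadic intervals in $[0,1]$), so $V_{n,m}\cap V_{n',m'}$ need not be empty for $n\neq n'$. Everything downstream then breaks: the $g_m$ no longer have disjoint supports, $\|\sum a_m g_m\|_p^p\neq\sum|a_m|^p$, and the restriction of $\sum a_mg_m$ to $V_{j,m_0}$ is not $a_{m_0}g_{m_0}$, so both the isometric $\ell_p$-copy and the nowhere-$L^q$ property of nontrivial combinations are unproved; the same cells underlie your (c), (d) and (e). The paper resolves this with its Lemma \ref{fact1}: outer regularity, atomlessness and full support produce a \emph{nowhere dense} Borel set $N\subset U$ of almost full measure, and nowhere density is what drives the induction --- at each step one finds a nonempty open subset of $U_{n+1}$ missing the closure of $N_1\cup\dots\cup N_n$, and plants $N_{n+1}$ there, obtaining globally pairwise disjoint $N_n\subset U_n$ which are then split into the cells $N_{n,m}$. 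Tellingly, your argument for (a), (c), (d), (e) never uses the outer-regularity hypothesis of the theorem; a naive greedy choice without such a lemma can fail, since a later $U_m$ may be covered up to a null set by earlier chosen pieces. This is a genuine missing idea, not a routine detail.

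Two further remarks. In (b), your reduction ``after ordering so $M_j/M_{j-1}\to1$'' is not available: without atomlessness the blocks can be atoms of rapidly growing measure (e.g.\ $\mu(\{j\})=2^{2^j}$ on $\N$) and no reordering or splitting fixes the ratios; the conclusion survives, but via an Abel--Dini type estimate ($\sum_j(M_j-M_{j-1})/M_j=\infty$ whenever $M_j\to\infty$, right-endpoint comparison for convergence at exponent $p$), which you should state in place of the ratio claim. On the positive side, where your proposal is correct it is in places cleaner than the paper: your single spike with $c_k=2^{-k}$, $\lambda_k=2^{k/p}k^{-2/p}$ fails every $q\in(p,\infty]$ at once, avoiding the paper's double series over $r_n\downarrow p$ in Lemma \ref{hA}; your (d) replaces the paper's almost disjoint family of size $\mathfrak{c}$ and the functions $\chi_{B_\beta}+\frac1n f^{\alpha(\beta,n)}$ by the closed $\ell_p$-span plus a countable perturbation $\psi_k=\phi_k+\tilde g_k$ of a dense set of integrable simple functions, and the ``$G+\Phi$ with $G$ nowhere $L^q$, $\Phi$ everywhere $L^q$'' argument is sound (given disjoint cells); and your (e) with $f_\gamma=e^{\alpha_\gamma\varphi}-1$ is the paper's construction in disguise, since $g_\alpha=\sum_j\theta_\alpha^j\chi_{B_j}$ is $e^{(\ln\theta_\alpha)\varphi}$ with $\varphi=\sum_j j\chi_{B_j}$, both resting on $\q$-independence of the exponents and a dominant-term estimate. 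Repair the disjointification (quote or reprove Lemma \ref{fact1}) and the rest stands.
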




See Section \ref{secfinal} for comments on the choice of working with $\pi$-bases instead of the more usual bases of open sets. In addition to Theorem \ref{mainth} we also prove that, for a special classes of positive Borel measure spaces, $S_p$ contains an isomorphic copy of $\ell_2$ (see Theorems \ref{thl2} and \ref{thl201}, and Corollary \ref{coroll2}). This motivates a new investigation direction concerning spaceability (see Section \ref{secvsp}). \\

{\bf Remark 1.} Referring to items (a)--(c), it is worth recalling that for $p<1$, $L^p$ contains no complemented copy of $\ell_p$. This is easily seen if one recalls that, for $p<1$, $\ell_p$ admits nontrivial continuous linear forms (e.g. the evaluation functionals), while every nontrivial linear form on $L^p$ is discontinuous. \\


{\bf Remark 2.} In any measurable space which admits a set of strictly positive finite measure (in particular for $(X,\mu)$ under the conditions in (e)) and $0<p<q<\infty$, the set of $L^p$ functions which are not $L^q$ is \emph{not} algebrable; to see this, just note that if $f$ is $p$- integrable but not $q$-integrable on some set of finite measure $U$, then $f^n$ is not $p$-integrable if we choose a large enough power $n$. There is therefore no hope in looking for algebraic structures of strict-order integrable functions in many cases. One exception is given by:

\begin{theorem}[Garc\'ia-Pacheco, P\'erez-Eslava, Seoane-Sep\'ulveda \cite{gps}]

If $(X,\mathcal{M},\mu)$ is a measure space in which there exists and infinite family of pairwise disjoint measurable sets $A_n$ satisfying $\mu(A_n)\geq \epsilon$ for some $\epsilon >0$, then 
$$
L^\infty \setminus \cap_{p=1}^\infty L^p
$$ 
is spaceable in $L^\infty$ and algebrable. 
\label{thgps}
\end{theorem}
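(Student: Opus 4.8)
The plan is to treat the two assertions separately, in both cases exploiting only the combinatorial skeleton provided by the disjoint sets $A_n$. Throughout I would write $B=\bigcup_n A_n$ and record the two elementary facts that drive everything: $\mu(B)=\infty$, and for a function of the form $f=\sum_n c_n\chi_{A_n}$ supported on $B$ one has $\|f\|_\infty=\sup_n|c_n|$ and $\|f\|_p^p\ge \epsilon\sum_n|c_n|^p$. Thus $f\in L^\infty$ exactly when $(c_n)$ is bounded, while $f\notin L^p$ as soon as $\sum_n|c_n|^p=\infty$. Every function I construct will live on $B$ and be constant on each $A_n$, which reduces the whole problem to a question about sequences.

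For spaceability I would fix a partition $\mathbb{N}=\bigsqcup_{k\ge 1}N_k$ into infinitely many infinite sets and set $g_k=\chi_{\bigcup_{n\in N_k}A_n}$. Since the $g_k$ have pairwise disjoint supports of infinite measure, the map $(a_k)\mapsto\sum_k a_k g_k$ is an isometry of $c_0$ into $L^\infty$, so its range $V$ is a closed infinite-dimensional subspace. For nonzero $(a_k)\in c_0$ pick $k_0$ with $a_{k_0}\ne 0$; on the infinite-measure support of $g_{k_0}$ the function $\sum_k a_k g_k$ equals $a_{k_0}$, whence its $L^1$-norm is infinite and thus $\sum_k a_k g_k\in L^\infty\setminus\bigcap_{p=1}^\infty L^p$. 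Therefore $V\setminus\{0\}$ witnesses spaceability (in fact $V$ is an isometric copy of $c_0$).

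For algebrability the naive first attempt, taking generators $\sum_n c_n\chi_{A_n}$ directly, is doomed exactly as Remark 2 warns: products of bounded non-integrable functions tend to become integrable. I would instead use the exponential method. Fix a sequence $(\delta_n)\subset[a,b]\subset(0,\infty)$ and put $\varphi_s=\sum_n e^{-s\delta_n}\chi_{A_n}$ for $s>0$; these are bounded, supported on $B$, and satisfy $\varphi_s\varphi_{s'}=\varphi_{s+s'}$. Choosing a $\mathbb{Q}$-linearly independent set $R\subset(0,\infty)$ of cardinality $\mathfrak{c}$, distinct monomials in $\{\varphi_s:s\in R\}$ carry distinct exponents $\sum_i\nu_i s_i$, so any polynomial without constant term evaluated at finitely many generators collapses to a genuine finite exponential sum $F=\sum_j a_j\varphi_{s_j}=\sum_n\psi(\delta_n)\chi_{A_n}$, where $\psi(\delta)=\sum_j a_j e^{-s_j\delta}$ has distinct $s_j>0$ and nonzero coefficients $a_j$.

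The crux, and the step I expect to be the main obstacle, is to show that every such $F$ fails to lie in $\bigcap_{p=1}^\infty L^p$; this simultaneously yields membership in the target set and freeness of the generators, since it forces $F\ne 0$. Here I would use that a nonzero exponential polynomial $\psi$ is real-analytic and hence has only finitely many zeros in the compact interval $[a,b]$. If $(\delta_n)$ is chosen to enumerate, with infinite repetition, a countable dense subset of $[a,b]$, then some value $\delta^\ast$ is attained infinitely often with $\psi(\delta^\ast)\ne 0$; consequently $|\psi(\delta_n)|\ge|\psi(\delta^\ast)|>0$ for infinitely many $n$ and $\|F\|_p^p\ge\epsilon\sum_n|\psi(\delta_n)|^p=\infty$ for every $p$. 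Thus $\{\varphi_s:s\in R\}$ freely generates an algebra $\mathcal{B}$ with $\mathcal{B}\setminus\{0\}\subset L^\infty\setminus\bigcap_{p=1}^\infty L^p$, giving strong $\mathfrak{c}$-algebrability and in particular the asserted algebrability. The only remaining points needing genuine care are the bookkeeping that makes distinct monomials yield distinct exponents (guaranteed by $\mathbb{Q}$-linear independence of $R$) and the elementary fact that nonzero exponential sums have isolated zeros.
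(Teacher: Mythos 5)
A preliminary point: the paper does not prove this statement at all --- Theorem \ref{thgps} is quoted from \cite{gps} as background for Remark 2 --- so there is no in-paper proof to measure yours against; I can only verify your argument and set it beside the closest technique the paper does use, the proof of Theorem \ref{mainth}(e) in Section \ref{secalg}. Your proof is correct, and it in fact delivers more than asked: an isometric copy of $c_0$ for the spaceability, and strong $\mathfrak{c}$-algebrability for the algebrability. The spaceability half is routine and sound: the blocks $g_k=\chi_{\bigcup_{n\in N_k}A_n}$ are disjointly supported with supports of infinite measure, so $(a_k)\mapsto\sum_k a_k g_k$ is a $c_0$-isometry and every nonzero element of the range equals a nonzero constant on a set of infinite measure, hence lies outside $L^p$ for \emph{every} finite $p$. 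For the algebrability half, the key points all check out: $\varphi_s\varphi_{s'}=\varphi_{s+s'}$ because the $A_n$ are disjoint; $\mathbb{Q}$-linear independence of $R$ makes distinct monomials carry distinct exponents $\sum_i\nu_i s_i>0$, so no cancellation of monomials can occur and a nonzero constant-term-free polynomial collapses to $F=\sum_n\psi(\delta_n)\chi_{A_n}$ with $\psi$ a nonzero exponential sum; and since $\psi$ is real-analytic and $\not\equiv 0$ (the functions $e^{-t\delta}$, $t$ distinct, are linearly independent), it has finitely many zeros in $[a,b]$, so your choice of $(\delta_n)$ --- infinitely many distinct values, each repeated infinitely often --- forces $|\psi(\delta_n)|$ to be bounded away from $0$ along an infinite set of indices, whence $\|F\|_p^p\geq\epsilon\sum_n|\psi(\delta_n)|^p=\infty$ for all $p\geq 1$. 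This single estimate simultaneously gives $F\neq 0$ (freeness) and $F\notin\bigcap_{p=1}^\infty L^p$, as you say.

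It is worth recording why your analytic-zeros device is genuinely necessary, i.e., why the paper's own mechanism from Section \ref{secalg} does not transplant. There, generators $\sum_j\theta_\alpha^j\chi_{B_j}$ with $\{\ln\theta_\alpha\}$ $\mathbb{Q}$-independent are handled by the dominant-term bound $|\beta_1\theta_1^j+\dots+\beta_m\theta_m^j|>\frac{1}{2}|\beta_1|\theta_1^j$ for large $j$; that suffices because the target $\mathcal{G}$ consists of \emph{unbounded} functions and $\mu(B_j)\leq 1/j!$ decays rapidly. In the present dual regime ($\mu(A_n)\geq\epsilon$ from below, target bounded but non-integrable) one would be forced to take $\theta_\alpha\in(0,1)$ to stay in $L^\infty$, and then $\sum_j\theta_1^{jp}\mu(A_j)$ can converge, so the dominant-term estimate proves nothing: one needs coefficients that do \emph{not} decay, which is exactly what sampling $\psi$ at infinitely repeated frequencies achieves. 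Two cosmetic remarks: density of $\{\delta_n\}$ in $[a,b]$ is superfluous (infinitely many distinct values, each attained infinitely often, already suffice, as your own argument shows); and in your display the definition of $Q$-type bookkeeping aside, you should state explicitly that after discarding zero coefficients at least one monomial survives --- which is immediate from the no-cancellation property --- to justify that $\psi$ is a \emph{nonzero} exponential sum.
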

Note that Theorem \ref{mainth}(e) complements, in some sense, the algebrability part of Theorem \ref{thgps}. \\


{\bf Remark 3.} Theorem \ref{mainth} relates to what was mentioned previously in the following way: 
 \begin{itemize}
\item (a) generalizes Theorem \ref{quasi01}, Theorem \ref{thgkp}(1) and, under our assumptions, also Theorem \ref{frankenstein}(1);
\item it is not hard to adapt Theorem \ref{frankenstein}(2) for $p<1$ and to see that the space guaranteeing the spaceability can be isometric to $\ell_p$ and complemented in case $p\geq 1$; since condition ($\beta$) from Theorem \ref{frankenstein} is milder that the conditions in (b), it follows that (b) does not really add much. But the construction in the proof we present is used to prove also (c), thus we include (b) for completeness and clearness;
\item under our assumptions, (c) improves Theorem \ref{frankenstein}(3); 
\item (d) improves Theorem \ref{thgondens} and gives a positive answer to Bernal-Gonz\'alez's Problem 1;
\item (e) improves Theorem \ref{thgkp}(2). \\
\end{itemize}

The remaining sections will be organized as follows. 
In Section \ref{secsp} we will prove Theorem \ref{mainth}(a)--(c), that is, its spaceability part. In Section \ref{secvsp}, we introduce the notion of \emph{$V$-spaceability} (see Definition \ref{defvspc}) and provide the results on $\ell_2$-spaceability. Section \ref{secdens} contains the proof of the dense-lineability result (Theorem \ref{mainth}(d)), and Section \ref{secalg} is on the algebrability result (Theorem \ref{mainth}(e)). In Section \ref{secfinal} we briefly discuss conditions on positive Borel measure spaces under which there exist, or not, functions $p$-nowhere-$q$ integrable in the corresponding $L^p$ spaces. We include related open problems throughout the text. 



\section{Spaceability: proof of Theorem \ref{mainth}(a)--(c)}
\label{secsp}

Recall the following standard result from functional analysis on Banach spaces:

\begin{theorem}

Suppose that $(X,\mu)$ is a Borel measure space. Let $1\leq p <\infty$, and suppose that $(f_n)$ is a sequence of norm-one, disjointly supported
functions in $L^p(\mu)$. Then $(f_n)$ is a complemented basic sequence isometrically
equivalent to the canonical basis of $\ell_p$. 

\label{stand}
\end{theorem}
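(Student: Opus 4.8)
The plan is to treat the two claims---isometric equivalence to the canonical $\ell_p$ basis and complementation---separately, since the first is essentially a pointwise computation while the second requires building an explicit projection.

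First I would establish the isometry. Write $A_n$ for the supports of the $f_n$, which are pairwise disjoint up to null sets. For any finitely many scalars $a_1,\dots,a_N$, the function $\sum_{n=1}^N a_n f_n$ has, at $\mu$-almost every point, at most one nonzero term; hence pointwise $\left|\sum a_n f_n\right|^p = \sum |a_n|^p |f_n|^p$, and integrating together with $\|f_n\|_p = 1$ yields
$$
\Big\| \sum_{n=1}^N a_n f_n \Big\|_p^p = \sum_{n=1}^N |a_n|^p \int_{A_n} |f_n|^p \, d\mu = \sum_{n=1}^N |a_n|^p.
$$
This identity says exactly that $(f_n)$ is isometrically equivalent to the unit vector basis of $\ell_p$; in particular it is a basic sequence, and its closed linear span is linearly isometric to $\ell_p$.

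Next I would produce biorthogonal functionals and use them to define a projection. Let $p'$ be the conjugate exponent and set $g_n \doteq \mathrm{sgn}(f_n)\,|f_n|^{p-1}$, which is supported on $A_n$; a direct computation using $(p-1)p'=p$ gives $\|g_n\|_{p'} = 1$ (with the convention $g_n = \mathrm{sgn}(f_n) \in L^\infty$ when $p=1$), and disjointness of supports gives $\int f_m g_n \, d\mu = \delta_{mn}$. I then define
$$
P h \doteq \sum_n \Big( \int h g_n \, d\mu \Big) f_n, \qquad h \in L^p .
$$
Since $P f_m = f_m$ and $P$ takes values in the closed span of the $f_n$, once $P$ is shown to be bounded it is automatically a projection onto that span.

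The boundedness estimate is the heart of the argument, and it is where one leans most heavily on disjointness. Because $g_n$ is supported on $A_n$, Hölder's inequality on $A_n$ gives $\left|\int h g_n \, d\mu\right| \le \|h\|_{L^p(A_n)} \|g_n\|_{L^{p'}(A_n)} = \|h\|_{L^p(A_n)}$. Invoking the isometry from the first step and then the disjointness of the $A_n$,
$$
\|Ph\|_p^p = \sum_n \Big| \int h g_n \, d\mu \Big|^p \le \sum_n \int_{A_n} |h|^p \, d\mu \le \int_X |h|^p \, d\mu = \|h\|_p^p .
$$
The same estimate shows the coefficient sequence lies in $\ell_p$, so the defining series for $Ph$ converges by the first step; and it gives $\|P\|\le 1$, so the closed span is complemented by a norm-one projection. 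The only points requiring genuine care are the verification that $\|g_n\|_{p'}=1$ and the biorthogonality relations, together with the clean separation of the $p=1$ and $p>1$ cases in the choice of $g_n$; I expect no real obstacle beyond bookkeeping, as this is the classical disjointification argument.
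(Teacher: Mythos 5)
Your proof is correct, and it is precisely the classical disjointification argument: the paper itself states Theorem \ref{stand} without proof, as a standard fact to be recalled, and your write-up supplies exactly the standard argument being invoked. All the delicate points check out --- the biorthogonal system $g_n=\mathrm{sgn}(f_n)\,|f_n|^{p-1}$ with the $p=1$ convention, the norm-one bound on $P$ via H\"older on each $A_n$ plus disjointness, and the fact that only H\"older's inequality (no duality representation of $(L^p)^*$) is used, so the projection exists for an arbitrary Borel measure, matching the generality of the statement.
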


It is not hard to see that the same holds for $0<p<1$, though the complementability is lost, as we previously pointed out. Our strategy to prove Theorem \ref{mainth}(a)--(c) will be to find sequences of norm-one, disjointly supported functions in $S_p$, $L^p\setminus\cup_{0<q<p}L^q$ and $S'_p$, under the corresponding assumptions. 



\begin{lemma}
Let $X$ be a topological space with a countable $\pi$-base. 
Suppose that $\mu$ is an atomless and outer-regular positive Borel measure on $X$ with full support. Let $U$ be an open set such that $\mu(U)>0$ and let $\varepsilon\in(0,1)$. Then there is a nowhere-dense Borel subset $N$ of $U$ such that $\mu(N)>\mu(U)\varepsilon$. 
\label{fact1}
\end{lemma}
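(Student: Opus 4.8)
The plan is to realise the desired set as the complement in $U$ of a small open set that is dense in $U$. Concretely, I will look for an open set $G\subseteq U$ which is dense in $U$ and satisfies $\mu(G)<(1-\varepsilon)\mu(U)$, and then set $N:=U\setminus G$. Such an $N$ is automatically Borel (it is the intersection of the open set $U$ with the closed set $X\setminus G$), is contained in $U$, and—assuming $\mu(U)<\infty$, which is the substantive case—satisfies $\mu(N)=\mu(U)-\mu(G)>\varepsilon\mu(U)$. So everything reduces to producing $G$ with these two properties. (When $\mu(U)=\infty$ one instead only keeps $\mu(G)$ finite, so that $\mu(N)=\infty$; the real content is the finite case.)

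First I would verify that $N=U\setminus G$ is nowhere dense in $X$ whenever $G$ is open and dense in $U$. Since $G$ is open, $N$ is relatively closed in $U$, so $\overline{N}\cap U=N$. If some nonvoid open $W\subseteq X$ were contained in $\overline{N}$, then, using $\overline{N}\subseteq\overline{U}$ and the density of $U$ in its closure $\overline{U}$, the set $W$ would have to meet $U$; the nonvoid open set $W\cap U$ would then lie in $\overline{N}\cap U=N=U\setminus G$, contradicting the density of $G$ in $U$. This is the one genuinely topological point and, I expect, the \emph{main obstacle}: nowhere-density must be checked in $X$, not merely in $U$, and it is precisely the openness of $G$ (yielding $\overline{N}\cap U=N$) together with the density of $U$ in $\overline{U}$ that bridges the gap.

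Next I would reduce the density requirement to a countable condition via the $\pi$-base. Writing $(U_n)_n$ for the countable $\pi$-base and $J:=\{n:U_n\subseteq U\}$, the family $\{U_n:n\in J\}$ is itself a $\pi$-base for $U$: any nonvoid open $A\subseteq U$ is nonvoid open in $X$, hence contains some $U_j$, and then $j\in J$. Consequently $G$ is dense in $U$ as soon as $G\cap U_n\neq\emptyset$ for every $n\in J$. It therefore suffices to drop a nonvoid open set of very small measure inside each $U_n$, $n\in J$.

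To build these pieces I would use atomlessness together with outer regularity. Fix $n\in J$ and $\delta_n>0$. Full support gives $\mu(U_n)>0$, and since $\mu$ is atomless, $U_n$ contains a Borel set $B$ with $0<\mu(B)<\delta_n$; outer regularity then furnishes an open $V\supseteq B$ with $\mu(V)<\delta_n$, whence $W_n:=V\cap U_n$ is a nonvoid open subset of $U_n\subseteq U$ with $\mu(W_n)<\delta_n$. Choosing $\delta_n:=(1-\varepsilon)\mu(U)\,2^{-n-1}$ and setting $G:=\bigcup_{n\in J}W_n$ produces an open $G\subseteq U$ meeting every $U_n$ ($n\in J$), hence dense in $U$, with $\mu(G)\le\sum_{n}\delta_n<(1-\varepsilon)\mu(U)$. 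Then $N:=U\setminus G$ is the required nowhere-dense Borel subset of $U$ with $\mu(N)>\varepsilon\mu(U)$.
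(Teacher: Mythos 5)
Your proof is correct and follows essentially the same route as the paper's: atomlessness gives a small Borel set inside each member of a countable $\pi$-base for $U$, outer regularity inflates it to a small open set, the union is a dense open $G\subseteq U$ of small measure, and $N=U\setminus G$ is the desired nowhere-dense set. You are in fact slightly more careful than the paper, which picks the small sets with total measure $<\varepsilon\mu(U)$ and hence literally obtains $\mu(N)>(1-\varepsilon)\mu(U)$ (a harmless slip, since $\varepsilon\in(0,1)$ is arbitrary), and which leaves implicit both the passage from a $\pi$-base of $X$ to one of $U$ and the verification that $N$ is nowhere dense in $X$ rather than merely in $U$ --- points you spell out correctly.
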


\begin{proof} Let $(U_n)$ be a $\pi$-base of $U$. Since $\mu$ is atomless, there are Borel sets $B_n\subset U_n$ such that $\mu(B_n)<\varepsilon\mu(U)/2^n$. Since $\mu$ is outer-regular, there are open set $V'_n\supset B_n$ with $\mu(V_n')<\varepsilon\mu(U)/2^n$. Let $V_n=V_n'\cap U$ and put $V=\bigcup_{n}V_n$. Then $\mu(V)<\varepsilon\mu(U)$ and $V$ is a dense open subset of $U$. Therefore $N=U\setminus V$ is nowhere dense subset of $U$ with measure greater than $\mu(U)\varepsilon$. 
\end{proof}

\begin{lemma}

Suppose that $\mu$ is an atomless positive Borel measure on $X$ with full support. Let $A$ be a measurable set in $X$ such that $\mu(A)>0$ and let $(a_n)$ be a sequence in $(0,+\infty )$. Then there is a sequence $(A_n)$ of pairwise disjoint measurable subsets of $A$ such that $0<\mu(A_n)< \infty$ and $\mu(A_{n+1}) \leq a_n \mu(A_n)$.

\label{fact2}
\end{lemma}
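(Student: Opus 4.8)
The plan is to reduce the whole construction to a single subset of $A$ of finite positive measure, fix in advance a summable sequence of target measures that is compatible with the prescribed ratios $a_n$, and then carve out pairwise disjoint pieces of exactly those measures using the intermediate-value property of atomless measures. The first step is a finite-measure reduction: for any measurable $B$ with $\mu(B)>0$ there is a measurable $C\subseteq B$ with $0<\mu(C)<\infty$. Indeed, if $\mu(B)<\infty$ take $C=B$; if $\mu(B)=\infty$, atomlessness provides $C\subseteq B$ with $0<\mu(C)<\mu(B)=\infty$, i.e.\ $0<\mu(C)<\infty$. Applying this to $A$, I would fix $A_0\subseteq A$ with $0<\mu(A_0)=:m<\infty$; every set constructed below will live inside $A_0$, so each will automatically have finite measure, which disposes of the requirement $\mu(A_n)<\infty$ even when $\mu(A)$ is infinite.

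Next I would select the target measures. Put $b_n=\min\{a_n,1/2\}\in(0,1/2]$, set $m_1=m/2$, and define $m_{n+1}=b_n m_n$. Then $m_n>0$ for every $n$, one has $m_{n+1}=b_n m_n\le a_n m_n$, and $m_n\le (1/2)^{n-1}m_1$, so that $\sum_n m_n\le 2m_1=m$. Forcing geometric decay through $b_n\le 1/2$ is exactly what keeps the targets summable with total mass at most $\mu(A_0)$, leaving enough room to place them disjointly.

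Finally I would carve the pieces recursively using the Darboux (intermediate-value) property of atomless finite measures, i.e.\ Sierpi\'nski's theorem: if $\mu$ is atomless and $\mu(B)<\infty$, then $\{\mu(C):C\subseteq B\}=[0,\mu(B)]$. Having chosen disjoint $A_1,\dots,A_k\subseteq A_0$ with $\mu(A_i)=m_i$, the remainder $R_k=A_0\setminus\bigcup_{i\le k}A_i$ satisfies $\mu(R_k)=m-\sum_{i\le k}m_i$, which is finite and at least $m_{k+1}$, since $\sum_{i\le k+1}m_i\le\sum_{i=1}^{\infty}m_i\le m$. Applying Sierpi\'nski's theorem to the finite atomless measure $\mu|_{R_k}$ yields $A_{k+1}\subseteq R_k$ with $\mu(A_{k+1})=m_{k+1}$. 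The resulting sequence $(A_n)$ is pairwise disjoint, contained in $A$, and satisfies $0<\mu(A_n)=m_n<\infty$ together with $\mu(A_{n+1})=m_{n+1}\le a_n m_n=a_n\mu(A_n)$, as required.

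The only genuinely delicate point is the very first one: producing a finite positive-measure subset when $\mu(A)=\infty$. This is precisely where atomlessness is essential, as it rules out ``purely infinite'' behaviour and forces $\mu$ to be semifinite on $A$; note that full support plays no role in this lemma. If one prefers to avoid invoking Sierpi\'nski's theorem, the same pieces can be extracted by a direct recursion: atomlessness lets one split off, inside any positive-measure set, a subset of arbitrarily small positive measure by iterated halving, and one then chooses $A_{n+1}\subseteq R_n$ with $0<\mu(A_{n+1})<\min\{a_n\mu(A_n),\mu(R_n)/2\}$, which keeps every remainder of positive measure and still respects the ratio bound.
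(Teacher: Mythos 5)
Your proof is correct, and at its core it follows the same strategy as the paper's: cap the prescribed ratios at $1/2$ so the target measures decay geometrically, then extract the sets $A_n$ disjointly by induction, using atomlessness to produce subsets of small positive finite measure. The differences are in packaging. First, you make the finite-measure reduction explicit (fixing $A_0\subseteq A$ with $0<\mu(A_0)<\infty$), whereas the paper absorbs it into the single inequality $0<\mu(A_1)<\frac12\mu(A)$, which silently yields finiteness when $\mu(A)=\infty$; both arguments rest on the same convention for atoms (a non-atom $B$ of infinite measure has a subset $C$ with $\mu(C)\notin\{0,\infty\}$), so this is not an extra assumption on your part. Second, you prescribe exact target measures $m_n$ in advance and realize them via Sierpi\'nski's intermediate-value theorem, while the paper only ever needs the weaker consequence of atomlessness that any positive-measure set contains a subset of measure strictly between $0$ and any prescribed positive bound: it chooses $A_n\subset A\setminus\bigcup_{k<n}A_k$ with $0<\mu(A_n)<a_{n-1}\mu(A_{n-1})\le\frac12\mu(A_{n-1})$, the geometric decay guaranteeing that each remainder has positive measure. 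Your route buys exact values and cleaner bookkeeping ($\sum_n m_n\le m$, so the remainders visibly survive) at the cost of invoking a heavier theorem; indeed, the alternative direct recursion you sketch at the end, choosing $0<\mu(A_{n+1})<\min\{a_n\mu(A_n),\mu(R_n)/2\}$, is essentially the paper's proof verbatim. Your side observation that full support plays no role is also accurate: the paper's proof uses only atomlessness.
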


\begin{proof}  We may assume that $a_n\leq 1/2$ for all $n$. Since $\mu$ is atomless, there is a Borel set $A_1\subset A$ such that 
$$
0<\mu(A_1)<\frac12 \mu(A).
$$ 
Likewise, there is Borel set $A_2\subset A\setminus A_1$ such that 
$$
0<\mu(A_2)<a_1\mu(A_1) \leq \frac12 \mu(A_1).
$$ 
Proceeding this way,  we can find inductively $A_{n}\subset A\setminus\bigcup_{k<n} A_k$ such that 
$$
0<\mu(A_n)<a_{n-1}\mu(A_{n-1})\leq \frac12 \mu(A_{n-1});
$$ this is possible since $\mu(A\setminus\bigcup_{k<n}A_k)>0$.
\end{proof}

\begin{lemma}

Suppose that $\mu$ is an atomless positive Borel measure on $X$ with full support. Then for any given Borel set $A$ in $X$ such that $\mu(A)>0$ there is a norm-one, $A$-supported function $h_A$ in $L^p\setminus\bigcup_{q>p}L^q$. 

\label{hA}
\end{lemma}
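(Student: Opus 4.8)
The plan is to realize $h_A$ as a weighted sum of indicator functions of a rapidly shrinking pairwise disjoint family inside $A$, produced by Lemma \ref{fact2}; note that Lemma \ref{fact1} is not needed here. First I would apply Lemma \ref{fact2} to $A$ with the constant sequence $a_n=1/2$, obtaining pairwise disjoint Borel sets $A_n\subset A$ with $0<\mu(A_n)<\infty$ and $\mu(A_{n+1})\leq\tfrac12\mu(A_n)$. Writing $m_n\doteq\mu(A_n)$, this forces the geometric decay $m_n\leq 2^{-(n-1)}m_1$, and it is precisely this controlled smallness of the $m_n$ that will drive the non-$q$-integrability for every $q>p$ at once.

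Next I would set $h_A\doteq\sum_n c_n\,\chi_{A_n}$ with coefficients $c_n\doteq n^{-2/p}m_n^{-1/p}$. By disjointness of the supports, $\int_X|h_A|^p\,d\mu=\sum_n c_n^p\,m_n=\sum_n n^{-2}<\infty$, so $h_A\in L^p$; the function is measurable as a countable combination of indicators of measurable sets, and it is supported in $\bigcup_n A_n\subset A$, hence $A$-supported. After rescaling by a positive constant I may assume $\|h_A\|_p=1$, which affects neither the support nor the integrability type.

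It then remains to check $h_A\notin L^q$ for every $q>p$. Again by disjointness, $\int_X|h_A|^q\,d\mu=\sum_n c_n^q\,m_n=\sum_n n^{-2q/p}\,m_n^{\,1-q/p}$. Since $q/p>1$, the exponent $1-q/p$ is negative, so the upper bound $m_n\leq 2^{-(n-1)}m_1$ gives the lower bound $m_n^{\,1-q/p}\geq\bigl(2^{-(n-1)}m_1\bigr)^{1-q/p}$, a quantity growing geometrically in $n$. This exponential growth dominates the polynomial weight $n^{-2q/p}$, so the general term of the series tends to $+\infty$ and the series diverges, giving $h_A\notin L^q$.

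The only point requiring genuine care is arranging that the divergence holds uniformly over all $q>p$ simultaneously, rather than for a single fixed $q$; I expect this to be the main (indeed the only) obstacle. It is resolved cleanly by the observation that any fixed geometric decay of $(m_n)$ beats every polynomial weight for every $q>p$, so no $q$-dependent tuning of the sequence $(a_n)$ in Lemma \ref{fact2} is necessary — the uniform choice $a_n=1/2$ already suffices.
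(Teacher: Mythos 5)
Your proof is correct, but it takes a genuinely different route from the paper's. The paper builds $h_A$ from a \emph{doubly}-indexed disjoint family $\{A_{n,m}\}$ together with an auxiliary sequence of exponents $r_n\downarrow p$: for each $n$ it constructs a block $h_n=\sum_m a_{n,m}\chi_{A_{n,m}}$ with $a_{n,m}^{r_n}\mu(A_{n,m})=1/m$, so that $\|h_n\|_{r_n}=\infty$ by divergence of the harmonic series while $h_n\in L^p$ by the ratio test, and then sums the normalized blocks $\sum_n h_n/(\|h_n\|2^n)$; failure of $q$-integrability for a given $q>p$ is witnessed by any block with $r_n\leq q$. You instead handle all $q>p$ simultaneously with a \emph{single} sequence $(A_n)$: the geometric decay $m_n\leq 2^{-(n-1)}m_1$ from Lemma \ref{fact2} with $a_n=1/2$, combined with $c_n=(n^2m_n)^{-1/p}$, gives $\sum_n c_n^p m_n=\sum_n n^{-2}<\infty$ at exponent $p$, while for $q>p$ the term $c_n^q m_n=n^{-2q/p}m_n^{1-q/p}$ grows to infinity because the negative exponent $1-q/p$ turns the geometric upper bound on $m_n$ into a geometric lower bound that beats the polynomial weight — all computations check out, and your normalization and support observations are fine (note also that, the support $\bigcup_n A_n$ having finite measure and $c_n\to\infty$, your $h_A$ is not in $L^\infty$ either, so the conclusion covers $q=\infty$ if one reads the union that way). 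Your construction is shorter and avoids both the ratio test and the auxiliary exponents, and your observation that Lemma \ref{fact1} plays no role here matches the paper. What the paper's heavier block structure buys is reusability: in the dense-lineability argument of Section \ref{secdens}, the inner index $m$ of the family $\{A_{n,m}\}$ is made to range over members of an almost disjoint family of subsets of $\N$, and the block decomposition is what allows the truncation/separation argument there; your single-sequence construction would not transfer directly to that refinement, though it is entirely adequate — indeed cleaner — for the lemma as stated.
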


\begin{proof} Let $A\subset X$ be measurable and $\mu(A)>0$; by Lemma \ref{fact2} there exists a family $\{A_{n,m}:n,m\in\N\}$ of pairwise disjoint subsets of $A$ of positive measure such that $\mu(A_{n,m+1})\leq \frac12 \mu(A_{n,m})$. Let $(r_n)$ be a strictly decreasing sequence of real numbers tending to $p$. Put 
$$
h_n=\sum_{m=1}^\infty a_{n,m}\chi_{A_{n,m}},
$$    
where $a^{r_n}_{n,m}\mu(A_{n,m})=1/m$. Then $||h_n||_{r_n}=\infty$ and 
$$
\int_X|h_n|^pd\mu=\sum_{m=1}^\infty a_{n,m}^p\mu(A_{n,m})=\sum_{m=1}^\infty \frac{1}{a_{n,m}^{r_n-p}m}.
$$ 
Since
$$\limsup_{m\to\infty}\frac{\frac{1}{a_{n,m+1}^{r_n-p}(m+1)}}{\frac{1}{a_{n,m}^{r_n-p}m}}=
\limsup_{m\to\infty} \left(\frac{\mu(A_{n,m+1})}{\mu(A_{n,m})}  \right)^{\frac{r_n-p}{r_n}} \leq 
\left(\frac12 \right)^{\frac{r_n-p}{r_n}}<1, 
$$ 
then by the ratio test for series we obtain that $h_n\in L^p$.
Put 
$$
h_A=\sum_{n=1}^\infty \frac{h_n}{||h_n||2^n}.
$$
Then $h_A\in L^p\setminus\bigcup_{q>p}L^q$ and $\|h_A\|=1$. 
\end{proof}

{\it Proof of Theorem \ref{mainth}(a). }Let $(U_n)$ be a $\pi$-base of $X$. Since $\mu$ is atomless and outer-regular, we may assume that $\mu(U_n)<\infty$ for each $n$. (Indeed, suppose that $\mu(U_n)=\infty$. Hence $U_n\neq\emptyset$ and there is $x\in U_n$. Since $\mu$ is atomless, then $\mu(\{x\})=0$.  By the outer-regularity of $\mu$, there is an open neighborhood $V$ of $x$ with arbitrarily small $\mu$-measure. Since $\mu$ does not vanish on open sets, then $0<\mu(V\cap U_n)<\infty$. We may replace $U_n\cap V$ with $U_n$.) 

By Lemma \ref{fact1}, there is a nowhere dense Borel set $N_1\subset U_1$ with $0<\mu(N_1)<\frac12$. Since $N_1$ is nowhere dense we can find a nonempty open set $U\subset U_2\setminus N_1$, and again by Lemma \ref{fact1} there is a nowhere dense Borel set $N_2\subset U\subset U_2$ with $0<\mu(N_2)<\frac{1}{2^2}$. We can then inductively define a pairwise disjoint sequence of nowhere dense Borel sets $(N_n)$ such that $N_n\subset U_n$ and $0<\mu(N_n)<1/2^n$. Decompose each $N_n$ into $\mu$-positive and pairwise disjoint Borel sets $N_{n,m}$. For each $n,m$ there exists, by Lemma \ref{hA}, a norm-one, $N_{n,m}$-supported function $h_{N_{n,m}}$ in $L^p\setminus\bigcup_{q>p}L^q$. If we put
$$
f_m=\sum_{n=1}^\infty \frac{h_{N_{n,m}}}{2^n}, 
$$
then $(f_m)$ will form a norm-one basic sequence of elements from  $S_p$ with pairwise disjoint supports, and by Theorem \ref{stand} our proof is concluded. \hfill{$\Box$}

\begin{lemma}

Suppose that $\mu$ is an infinite and $\sigma$-finite positive Borel measure on $X$. Then for any given Borel set $B\subset X$ of infinite measure, there exists a function $g_B\in L^p\setminus\bigcup_{q<p} L^q$ which is zero outside of $B$.   

\label{lemmapart2}
\end{lemma}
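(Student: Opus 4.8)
The plan is to produce a single function on $B$ built from a disjoint sequence of Borel sets of rapidly growing finite measure; this is the companion construction to Lemma \ref{hA}, but now the failure of integrability is pushed toward exponents \emph{below} $p$ rather than above, so I will use sets of large measure in place of sets of small measure, and the atomless hypothesis present there can be dropped.

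First, using only that $\mu$ is $\sigma$-finite with $\mu(B)=\infty$, I would write $B=\bigsqcup_k C_k$ with each $\mu(C_k)<\infty$ and $\sum_k\mu(C_k)=\infty$, and then group consecutive blocks greedily to obtain pairwise disjoint Borel sets $A_n\subset B$ with $2^{n^2}\le\mu(A_n)<\infty$. Each greedy step terminates because, after finitely many of the $C_k$ have been used, the remaining ones still sum to infinity, while the accumulated measure is a finite sum of finite numbers and hence finite; note that no atomless assumption is needed here, since we only require \emph{lower} bounds on the finite measures $\mu(A_n)$.

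Then I would set $g_B=\sum_n b_n\chi_{A_n}$ with $b_n=(2^{-n}/\mu(A_n))^{1/p}$, so that $b_n^p\,\mu(A_n)=2^{-n}$ and hence $\int_X|g_B|^p\,d\mu=\sum_n 2^{-n}=1$, giving $g_B\in L^p$; by construction $g_B$ vanishes off $\bigcup_n A_n\subset B$, which handles the support requirement. For the non-integrability, fix $q\in(0,p)$ and compute $\int_X|g_B|^q\,d\mu=\sum_n 2^{-nq/p}\,\mu(A_n)^{1-q/p}$. Since $1-q/p>0$, the bound $\mu(A_n)\ge 2^{n^2}$ forces the $n$-th term to be at least $2^{\,n^2(1-q/p)-nq/p}$, whose exponent tends to $+\infty$ because the $n^2$ contribution dominates; thus the series diverges and $g_B\notin L^q$.

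The main point — and the reason the measures must grow like $2^{n^2}$ rather than merely exponentially — is that divergence is required for \emph{every} $q<p$ simultaneously, and the binding case is $q\to p^-$: there the gain factor $\mu(A_n)^{1-q/p}$ carries an exponent $1-q/p$ that degenerates to $0$, so only a super-exponential growth of the $\mu(A_n)$ guarantees that the $n$-th term still blows up for arbitrarily small $1-q/p>0$. Everything else — summability at the exponent $p$ and the vanishing outside $B$ — is immediate from the choice of $b_n$, so this single growth estimate is the crux of the argument.
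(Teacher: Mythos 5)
Your proof is correct, but it takes a genuinely different route from the paper's. The paper fixes a strictly increasing sequence $r_n\uparrow p$ and, for each $n$, builds a block $g_n=\sum_m b_{n,m}\chi_{B_{n,m}}$ over a doubly-indexed disjoint family with doubling measures $2\mu(B_{n,m})\le\mu(B_{n,m+1})$, where $b_{n,m}^{r_n}\mu(B_{n,m})=1/m$; membership of each $g_n$ in $L^p$ is checked by the ratio test, the blocks are summed as $g_B=\sum_n g_n/(\|g_n\|2^n)$, and a fixed $q<p$ is handled \emph{diagonally} by the block with $r_n>q$, whose $q$-integral diverges like the harmonic-type series $\mu(B_{n,1})^{(r_n-q)/r_n}\sum_m m^{-q/r_n}=\infty$. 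You instead use a single singly-indexed family with super-exponential growth $\mu(A_n)\ge 2^{n^2}$ and coefficients normalized by $b_n^p\mu(A_n)=2^{-n}$, so that $L^p$ membership is a geometric series and one uniform estimate, namely that the $n$-th term of $\int|g_B|^q\,d\mu$ is at least $2^{\,n^2(1-q/p)-nq/p}\to\infty$, kills \emph{all} $q<p$ simultaneously; your diagnosis that the binding regime is $q\to p^-$, where the exponent $1-q/p$ degenerates and forces super-exponential growth, is exactly right. Your version is shorter and more elementary (no auxiliary exponent sequence, no ratio test, no infinite sum of normalized blocks), and it also makes explicit the greedy $\sigma$-finiteness argument producing the sets $A_n$, a point the paper leaves implicit when it asserts the existence of its family $\{B_{n,m}\}$; you are likewise correct that no atomlessness is needed, since only lower bounds on finite measures are required. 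What the paper's scheme buys in exchange is structural uniformity: its construction mirrors Lemma \ref{hA} (the $q>p$ counterpart, where small sets and decreasing exponents $r_n\downarrow p$ play the dual role), and this shared template is reused in the proofs of Theorem \ref{mainth}(a)--(d), whereas your one-shot construction, while cleaner here, does not transfer to the $q>p$ case, where the sets must be small and no analogous single growth condition is available inside a set of finite measure.
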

\begin{proof} Let $B\subset X$ be Borel of infinite measure, and let $\{B_{n,m}:n,m\in\N\}$ be a family of pairwise disjoint subsets of $B$ of positive finite measure such that $2\mu(B_{n,m})\leq \mu(B_{n,m+1})$. Let $(r_n)$ be a strictly increasing sequence of (strictly positive) real numbers tending to $p$. Put 
$$
g_n=\sum_{m=1}^\infty b_{n,m}\chi_{B_{n,m}},
$$    
where $b^{r_n}_{n,m}\mu(B_{n,m})=1/m$. Then  
$$
\int_X|g_n|^pd\mu=\sum_{m=1}^\infty b_{n,m}^p\mu(B_{n,m})=\sum_{m=1}^\infty \frac{b_{n,m}^{p-r_n}}{m}, 
$$ 
and since 
$$
\limsup_{m\to\infty}\frac{\frac{b_{n,m+1}^{p-r_n}}{(m+1)}}{\frac{b_{n,m}^{p-r_n}}{m}}=
\limsup_{m\to\infty}\left(\frac{\mu(B_{n,m})}{\mu(B_{n,m+1})}\right)^{\frac{p-r_n}{r_n}} \leq 
\left(\frac12\right)^{\frac{p-r_n}{r_n}}<1,
$$ 
by ratio test for series we obtain that $g_n\in L^p$. 
Letting 
$$
g_B\doteq\sum_{n=1}^\infty \frac{g_n}{\|g_n\|2^n},
$$
we have that $g_B\in L^p$. It suffices to show now that $g_B\not\in L_q$ for any $q<p$. Fix such $q$; for a large enough $n$, $r_n>q$, and then 
\begin{align*}
(\|g_n\|2^n)^q\int |g_B|^q & \geq \int |g_n|^q  = 
\sum_m \left( \frac{1}{m.\mu(B_{n,m})}\right)^{\frac{q}{r_n}}\mu(B_{n,m})\\
& = \sum_m \left(\frac{1}{m}\right)^{\frac{q}{r_n}}\mu(B_{n,m})^{\frac{r_n-q}{r_n}}=
\mu(B_{n,1})^{\frac{r_n-q}{r_n}}\sum_m \left(\frac{1}{m}\right)^{\frac{q}{r_n}}=
\infty.
\end{align*}

\end{proof}

{\it Proof of Theorem \ref{mainth}(b).} Since $\mu$ is infinite and $\sigma$-finite, then each Borel set of infinite measure $D$ can be written as an infinite disjoint union of Borel sets of infinite measure. To see this it is enough to verify that $D$ \emph{contains} an infinite disjoint union of Borel sets of infinite measure $D_n$. In effect, we can define inductively Borel sets $C_k\subset D$ such that $1\leq \mu(C_k) <\infty$; let $(M_n)$ be a pairwise disjoint family of infinite subsets of $\N$. Then $D_n\doteq \bigcup_{k\in M_n} C_k$ is a family of Borel sets satisfying the desired properties. The desired result then follows from Lemma \ref{lemmapart2} and the same argument that was used in the proof of Theorem \ref{mainth}(a). \hfill{$\Box$}\\








The proof of Theorem \ref{mainth}(c) is a combination of the constructions from the proofs of parts (a) and (b):\\

{\it Proof of Theorem \ref{mainth}(c).} Consider $U_n$, $N_n$ and $f_m$ as in the proof of Theorem \ref{mainth}(a). Since $\mu(X\setminus\bigcup_n N_n)=\infty$, $X\setminus\bigcup_n N_n$ can be written as a disjoint union of Borel sets of infinite measure $D_m$. Then by Lemma \ref{lemmapart2}, for each $m$ there is a norm-one function $g_{D_m}\in L^p \setminus\bigcup_{q<p} L^q$ which is zero outside of $D_m$. Then the norm-one functions 
$$
\frac{f_m + g_{D_m}}{2},\, m\in\N
$$
are in $S_p'$ and have almost disjoint supports. \hfill{$\Box$}\\





\section{$V$-spaceability} 
\label{secvsp}

We have shown that, under special circumstances, the sets $S_p$, $L^p\setminus \cup_{0<q<p}L^q$ and $S'_p$, united to $\{0\}$, admit copies of $\ell_p$. This suggests the following definition: 

\begin{definition}

Let $V$ be a topological vector space and $S$ be a subset of $V$. Given a subspace $W$ of $V$, we say that $S$ is \emph{$W$-spaceable} if $S\cup \{0\}$ contains a $W$-isomorphic subspace of $V$.

\label{defvspc}
\end{definition}

There are plenty of examples where $V$-spaceability (of a subset of a topological vector space $V$) is a strictly more restrictive condition than maximal-dimension spaceability; for instance, $L^1[0,1]$ admits a subspace isomorphic to $\ell_2$, which turns to be maximal-dimension spaceable but not $L^1[0,1]$-spaceable. 
As usual, we can add adjectives like ``isometrically'', ``complementably'' and so on to ``$W$-spaceable'', depending on how nicely placed in $V$ is the copy of $W$ we have found in $S\cup \{0\}$. For example, Theorem \ref{mainth}(a) says that, under our assumptions, $S_p$ is isometrically (and complementably, if $p\geq 1$) $\ell_p$-spaceable. Note that, given a topological vector space $V$, the notion of $V$-spaceability of some subset $S$ of $V$ is quite strong; in particular, it implies that $S$ is maximal-dimension spaceable, and that $S\cup\{0\}$ contains copies of all subspaces of $V$. We have this phenomenon occurring, for example, for the set of nowhere differentiable functions in $C([0,1])$. The main theorem from \cite{r-p} can be reformulated as follows: 

\begin{theorem}[Rodr\'iguez-Piazza \cite{r-p}]

In $C([0,1])$, the set of nowhere differentiable functions is isometrically $C([0,1])$-spaceable. 

\end{theorem}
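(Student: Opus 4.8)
The plan is to unwind the definition of $V$-spaceability in the case $V=W=C([0,1])$: what must be produced is a linear isometry $T\colon C([0,1])\to C([0,1])$ such that every nonzero element of the image $Y\doteq T(C([0,1]))$ is nowhere differentiable. Then $Y$, being the isometric image of a Banach space, is a closed infinite-dimensional subspace of $C([0,1])$ that is linearly isometric to $C([0,1])$ and satisfies $Y\setminus\{0\}\subset S$, where $S$ is the set of nowhere differentiable functions; this is precisely isometric $C([0,1])$-spaceability. Moreover it is enough to build such a $T$ with an arbitrary separable Banach space $X$ as domain: by the Banach--Mazur universality of $C([0,1])$ every separable $X$ embeds isometrically, and specializing to $X=C([0,1])$ recovers the statement. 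Thus I would prove the (formally stronger) assertion of \cite{r-p} that every separable Banach space embeds isometrically into $C([0,1])$ with nowhere differentiable nonzero image.

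For the construction, let $X$ be separable and choose a sequence $(x_n^*)$ that is weak$^*$-dense in the closed unit ball of $X^*$ (possible since that ball is weak$^*$-metrizable). Then $\|x\|=\sup_n|x_n^*(x)|$ for all $x$, and, since every nonvoid weak$^*$-open subset of the ball meets the sequence in infinitely many terms, for each $x\neq0$ one has $|x_n^*(x)|\ge\|x\|/2$ for infinitely many $n$. Fix a Hadamard-lacunary frequency sequence $N_n$ with $N_{n+1}/N_n\ge q>1$ and weights $\lambda_n>0$ with $\sum_n\lambda_n<\infty$ and $\lambda_nN_n\to\infty$, and define the oscillatory part
$$
O_x(t)\doteq\sum_{n}\lambda_n\,x_n^*(x)\cos(N_nt),
$$
which converges uniformly because $\sum_n\lambda_n<\infty$ and $\|x_n^*\|\le1$; in particular $\|O_x\|_\infty\le\big(\sum_n\lambda_n\big)\|x\|$. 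I would then pair $O_x$ with a Lipschitz ``skeleton'' $M_x$ that reads off the scalars $x_n^*(x)$ on a sequence of disjoint subintervals, arranged so that $\sup|M_x|=\sup_n|x_n^*(x)|=\|x\|$, and consider $Tx\doteq M_x+O_x$. Linearity and continuity of $T$ are immediate, and injectivity holds because $M_x$ already determines $(x_n^*(x))_n$ and hence $x$.

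Nowhere differentiability is the robust part. For $x\neq0$ the infinitely-many-large-coefficients property gives $\limsup_n\lambda_nN_n|x_n^*(x)|\ge(\|x\|/2)\limsup_n\lambda_nN_n=\infty$, so by the classical criterion for Hadamard-lacunary cosine series (such a series is either of class $C^1$, which forces $\sum_n\lambda_nN_n|x_n^*(x)|<\infty$, or else nowhere differentiable with no finite one-sided derivative at any point) the function $O_x$ admits no one-sided derivative anywhere. Since $M_x$ is Lipschitz and piecewise linear, it has one-sided derivatives at every point, so $Tx=M_x+O_x$ can have a one-sided derivative at a point only where $O_x$ does; hence $Tx$ is nowhere differentiable. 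The main obstacle is to reconcile this global oscillation with \emph{exact} isometry: $O_x$ must be present at every point so that differentiability fails everywhere, yet it must never lift $\|Tx\|_\infty$ above $\|x\|$, and---because $\|x\|$ is not a linear function of $x$---one cannot simply scale the oscillation by the norm. Taking $\sum_n\lambda_n$ small controls $\|O_x\|_\infty$ linearly but only yields a near-isometry; to obtain equality I would, following \cite{r-p}, interleave skeleton and oscillation on a Cantor-type scheme, realizing the norm on the Cantor set while confining the controlled oscillation to the complementary gaps (and to the set itself with amplitude guaranteed never to exceed the already-attained maximum). Balancing these two linear requirements simultaneously is the delicate heart of the proof; the lacunary estimate and the weak$^*$-density of $(x_n^*)$ are exactly the tools that make it work.

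Finally, specializing $X=C([0,1])$ produces a closed subspace $Y=T(C([0,1]))$ of $C([0,1])$, linearly isometric to $C([0,1])$, with $Y\setminus\{0\}$ consisting of nowhere differentiable functions. This exhibits the set of nowhere differentiable functions as isometrically $C([0,1])$-spaceable, as claimed.
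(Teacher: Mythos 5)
The paper itself offers no proof of this statement: it simply cites Rodr\'iguez-Piazza \cite{r-p} and reformulates his theorem in the language of $V$-spaceability. So your proposal has to stand on its own, and as written it does not. Your reduction is the right reading of the statement (build, for each separable Banach space $X$, a linear isometry $T:X\to C([0,1])$ whose nonzero image vectors are nowhere differentiable, then specialize $X=C([0,1])$), and the two-part ansatz $Tx=M_x+O_x$, a norming Lipschitz skeleton plus a Hadamard-lacunary oscillation, is a sensible outline. But at the decisive step --- \emph{exact} isometry --- you write that you would ``follow \cite{r-p}'', interleaving skeleton and oscillation on a Cantor-type scheme, and you yourself call this ``the delicate heart of the proof''. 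That heart is precisely the content of the theorem: mere spaceability of the nowhere differentiable functions in $C([0,1])$ was already known from Gurariy's 1966 result, and what \cite{r-p} adds is exactly the isometric copy of $C([0,1])$ (equivalently, of every separable Banach space). Deferring that step to the cited paper leaves your argument proving at best a $(1+\varepsilon)$-isomorphic embedding with nowhere differentiable image, not the isometric statement. And the difficulty is concrete, not bookkeeping: since $\sup_n|x_n^*(x)|=\|x\|$ is in general attained at no single $n$, continuity of $M_x$ forces the reading intervals to accumulate, and the value of $M_x$ at an accumulation point must be a limit of the $x_n^*(x)$ along the corresponding subsequence --- but the peaking subsequence depends on $x$, while the geometry of the intervals cannot; arranging this for all $x$ simultaneously, while also keeping $\|M_x+O_x\|_\infty\leq\|x\|$ even though $O_x$ must oscillate near the peaks, is where the actual construction lives, and none of it appears in the proposal.

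A secondary, fixable, issue: the dichotomy you invoke for lacunary cosine series (``either of class $C^1$ or nowhere differentiable with no finite one-sided derivative anywhere'') is not a standard theorem as stated. What is classical is that for $\sum a_n\cos(N_nt)$ with $N_{n+1}/N_n\geq q>1$, differentiability at a \emph{single} point forces $a_nN_n\to 0$, and a pointwise Lipschitz condition at a single point forces $a_nN_n=O(1)$. Since your parameters give $\sup_n\lambda_nN_n|x_n^*(x)|=\infty$ whenever $x\neq 0$, the correct conclusion is that $O_x$ has unbounded difference quotients at every point. This is in fact the statement you want, because it also repairs a small hole in your last step: $M_x$, being piecewise linear on infinitely many intervals, need not have one-sided derivatives at accumulation points of those intervals, so your argument ``$M_x$ has one-sided derivatives everywhere, hence $Tx$ is differentiable only where $O_x$ is'' does not quite run; but $M_x$ is Lipschitz, so $Tx=M_x+O_x$ inherits unbounded difference quotients at every point and therefore admits no finite derivative, one-sided or not, anywhere.
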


One step further is due to Hencl: 

\begin{theorem}[Hencl \cite{hencl}]

In $C([0,1])$, the set of nowhere approximatively differentiable and nowhere H\"older functions is isometrically $C([0,1])$-spaceable. 

\end{theorem}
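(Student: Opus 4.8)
The plan is to prove that the nowhere approximately differentiable and nowhere H\"older functions form an isometrically $C([0,1])$-spaceable set by combining the reformulation strategy behind the Rodr\'iguez-Piazza theorem with the stronger genericity provided by Hencl's construction. First I would recall what ``$C([0,1])$-spaceable'' demands here: by Definition \ref{defvspc}, together with the isometric refinement, I must exhibit a linear isometry $T\colon C([0,1])\to C([0,1])$ whose range, minus the zero function, consists entirely of functions that are simultaneously nowhere approximately differentiable and nowhere H\"older. The guiding idea is that these pathological properties should be stable under the isometric embedding, so that if a single ``seed'' function carries the pathology sufficiently robustly, then every nonzero element of a whole isometric copy of $C([0,1])$ inherits it.

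The key steps, in order, would be as follows. I would begin by isolating from Hencl's construction a template that produces, for any prescribed continuous ``target'' behaviour, a function that is nowhere approximately differentiable and nowhere H\"older; the essential quantitative content is that at every point the relevant difference quotients and H\"older ratios blow up along some sequence, uniformly enough that the blow-up cannot be cancelled by adding a Lipschitz (or merely ``nicely behaved'') perturbation. Second, I would set up the isometric embedding. The natural candidate is an operator of the form $(Tf)(x) = f(\varphi(x)) + g(x)$ or, more flexibly, a composition/multiplication-type map built on a homeomorphism $\varphi$ of $[0,1]$ together with a fixed strongly pathological additive term $g$; one arranges the construction so that $T$ preserves the sup-norm exactly and so that the pathological part of $g$ dominates. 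Third, and this is the heart of the matter, I would verify that for every $f\in C([0,1])$ the image $Tf$ retains both pathologies: the nowhere-H\"older property because the modulus of continuity of $Tf$ is bounded below, at every point and every scale, by that of the pathological seed (a continuous $f\circ\varphi$ cannot improve the local H\"older behaviour beyond a controlled amount), and the nowhere-approximate-differentiability because the density of points witnessing large difference quotients is preserved under the perturbation.

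The main obstacle I expect is the interaction between approximate differentiability and the additive structure of the embedding. Unlike nowhere differentiability, where one typically shows that $f$ plus any $C^1$ function stays nondifferentiable, approximate differentiability is a measure-theoretic notion: one must control difference quotients only on sets of full density, so the seed's pathology must survive the deletion of null-density exceptional sets caused by adding an arbitrary continuous $f\circ\varphi$. Establishing that the bad sets at each point still have positive upper density after this perturbation — simultaneously for \emph{all} $f$ and at \emph{every} point $x$ — is the delicate uniformity that Hencl's original argument already encodes for a single function, and the real work is to check that his estimates are robust enough to pass to the entire isometric range rather than to one function. If the seed's construction gives blow-up rates that are, so to speak, ``infinitely faster'' than any continuous modulus of continuity, then this robustness follows and the theorem is established; making that quantitative comparison precise is where I would concentrate the effort.
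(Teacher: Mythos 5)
First, a point of order: the paper does not prove this statement at all --- it is quoted from Hencl's article \cite{hencl} (itself building on Rodr\'iguez-Piazza \cite{r-p}) and merely restated in the paper's $V$-spaceability terminology. So your proposal must be measured against the known construction, and there it has a fatal structural gap. Your candidate embedding $(Tf)(x)=f(\varphi(x))+g(x)$ is \emph{affine}, not linear: $T(0)=g\neq 0$, so its range is not a linear subspace and the scheme of a ``fixed pathological seed $g$ dominating all perturbations'' cannot even be formulated for a linear map (linearity forces the pathology to scale with $f$, and to vanish with it). Worse, the affine map is outright useless here: if $\varphi$ is a homeomorphism, then for any smooth $h$ the choice $f=(h-g)\circ\varphi^{-1}$ gives $Tf=h$, so the range of your map is \emph{all} of $C([0,1])$, smooth functions included. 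The multiplication-type variant $(Tf)(x)=m(x)f(\varphi(x))$ is linear but fails too: take $f$ vanishing on an open set; then $Tf$ vanishes on an open set and is differentiable there. This last point reveals the real obstruction, which your sketch never confronts: \emph{any} construction in which the local behaviour of $Tf$ near $x$ depends only on the values of $f$ near $\varphi(x)$ is doomed, because $f$ may vanish locally. The pathology cannot be carried by a fixed additive or multiplicative seed; it must be excited, at every point of $[0,1]$, by a \emph{linear} mechanism that is nonzero whenever $f\neq 0$ anywhere.

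That nonlocal linear mechanism is precisely the content of the Rodr\'iguez-Piazza/Hencl construction: one first embeds $C([0,1])$ isometrically into $C(\Delta)$ for a suitable nowhere dense, Lebesgue-null Cantor-type set $\Delta\subset[0,1]$, and then builds a linear \emph{isometric extension operator} $E\colon C(\Delta)\to C([0,1])$ which, on the intervals complementary to $\Delta$, inserts oscillations whose amplitudes are linear functionals of $f$ --- values of $f$ sampled along a sequence dense in $\Delta$ --- with frequencies growing with the generation of the gap, and with an envelope keeping $\|Ef\|_\infty=\|f\|_\infty$. Since $f\neq 0$ forces $f(t_k)\neq 0$ for some sample point $t_k$, every gap of sufficiently high generation carries oscillation of a definite amplitude, and because $\Delta$ is null these oscillations produce blow-up of the relevant difference quotients and H\"older ratios on sets of positive density at \emph{every} point; Hencl's contribution is exactly the quantitative strengthening of this oscillation scheme needed for approximate differentiability and all H\"older exponents simultaneously. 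Your final heuristic --- that a seed with blow-up ``infinitely faster than any continuous modulus of continuity'' would settle matters --- also cannot be repaired, since there is no modulus of continuity valid for all of $C([0,1])$, and, as shown above, adding an arbitrary continuous perturbation to a fixed seed can produce any continuous function whatsoever.
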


Another example derives from Theorem \ref{mainth}(b): 

\begin{corollary}

In $\ell_p$, the set $\ell_p\setminus \cup_{0<q<p}\ell_q$ is isometrically $\ell_p$-spaceable, and if $p\geq 1$, it is isometrically and complementably $\ell_p$-spaceable. 

\end{corollary}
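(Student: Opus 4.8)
The plan is to deduce this corollary directly from Theorem \ref{mainth}(b) by exhibiting $\ell_p$ as an instance of an $L^p$ space over a suitable measure space, so that the phrase ``$\ell_p\setminus\cup_{0<q<p}\ell_q$'' becomes literally ``$L^p\setminus\cup_{0<q<p}L^q$'' for that space. First I would take $X=\N$ with the counting measure $\mu$, so that $L^p(X,\mu)=\ell_p$ isometrically for every $0<p<\infty$, and under this identification $L^q(X,\mu)=\ell_q$ as well. The counting measure on $\N$ is evidently $\sigma$-finite (each singleton has finite measure and $\N$ is countable) and infinite (since $\mu(\N)=\infty$), so the hypotheses of Theorem \ref{mainth}(b) are met. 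Applying that theorem yields a subspace of $\ell_p$, contained in $(\ell_p\setminus\cup_{0<q<p}\ell_q)\cup\{0\}$, that is $\ell_p$-isometric, and complemented when $p\geq 1$. This is exactly the assertion that $\ell_p\setminus\cup_{0<q<p}\ell_q$ is isometrically $\ell_p$-spaceable, and complementably so for $p\geq 1$.

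The one point requiring a short verification is that the sequence of disjointly supported norm-one functions produced inside Theorem \ref{mainth}(b) yields an $\ell_p$-\emph{isometric} copy, not merely an isomorphic one; but this is precisely the content of Theorem \ref{stand} (together with the remark that the same isometric conclusion holds for $0<p<1$, with complementability lost). Thus the spanning functions, being disjointly supported and norm-one in $L^p=\ell_p$, span an isometric copy of $\ell_p$ that is complemented in the $p\geq 1$ case, matching Definition \ref{defvspc} of $W$-spaceability with $W=\ell_p$.

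I expect no real obstacle here: the entire proof is the observation that $\ell_p$ is the $L^p$ space of a $\sigma$-finite infinite measure and an invocation of part (b). The only thing to be careful about is to record explicitly the identification $L^q(\N,\text{counting})=\ell_q$ for all relevant $q$, so that the exceptional set transports correctly, and to confirm that counting measure satisfies ``infinite and $\sigma$-finite'' as required (no atomlessness or full-support hypothesis is needed for part (b), which is what makes this clean). The proof can therefore be written in just a few lines.

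\begin{proof}
Take $X=\N$ equipped with the counting measure $\mu$. Then $\mu$ is infinite and $\sigma$-finite, and for every $0<q<\infty$ we have the isometric identification $L^q(\N,\mu)=\ell_q$. Under this identification the set $L^p\setminus\cup_{0<q<p}L^q$ becomes exactly $\ell_p\setminus\cup_{0<q<p}\ell_q$. By Theorem \ref{mainth}(b), $(\ell_p\setminus\cup_{0<q<p}\ell_q)\cup\{0\}$ contains an $\ell_p$-isometric subspace of $\ell_p$, which is moreover complemented when $p\geq 1$. By Definition \ref{defvspc} this says precisely that $\ell_p\setminus\cup_{0<q<p}\ell_q$ is isometrically $\ell_p$-spaceable, and isometrically and complementably $\ell_p$-spaceable when $p\geq 1$.
\end{proof}
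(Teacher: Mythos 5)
Your proposal is correct and is exactly the paper's argument: the paper proves this corollary in one line by noticing that the positive integers with the counting measure satisfy the hypotheses of Theorem \ref{mainth}(b). You simply make explicit the identification $L^q(\N,\mu)=\ell_q$ and the check that counting measure is infinite and $\sigma$-finite, which the paper leaves implicit.
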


To see this, just notice that the positive integers with the counting measure satisfy the conditions in Theorem \ref{mainth}(b). This example is less interesting than the two previous ones, since all closed infinite-dimensional subspaces of $\ell_p$ are isomorphic to $\ell_p$, while $C([0,1])$ contains isometric copies of all separable Banach spaces. 

This remarks naturally motivate new directions of investigation concerning spaceability.  
In our context of nowhere $p$-integrable functions, we can pose the following:\\

\Problem Under appropriate assumptions, for which subspaces $V$ of $L^p$, is $S_p$ (or $L^p\setminus \cup_{0<q<p}L^q$, or $S'_p$) (isometrically, complementably...) $V$-spaceable? \\

The same could be asked when studying the spaceability of any other subset of a topological vector space. In the remaining of this section, we present some initial results in the direction of solving this problem (Theorems \ref{thl2} and \ref{thl201}, and Corollary \ref{coroll2}):

\begin{theorem}

Suppose that $1\leq p <\infty$ and that $(X,\mu)$ is a positive Borel measure space such that $S_p(X)$ is nonvoid. Then $S_p(X\times [0,1])$ is $\ell_2$-spaceable in $L^p(X\times [0,1])$. 

\label{thl2}
\end{theorem}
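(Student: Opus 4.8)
The plan is to exploit the extra room provided by the factor $[0,1]$. Fix once and for all a function $\phi\in S_p(X)$, available since $S_p(X)\neq\emptyset$; thus $\phi\in L^p(X)$, while $\int_A|\phi|^q\,d\mu=\infty$ for every nonvoid open $A\subset X$ and every $p<q<\infty$, and $\operatorname*{ess\,sup}_A|\phi|=\infty$ for $q=\infty$. On $([0,1],\lambda)$, with $\lambda$ Lebesgue measure, I would take a sequence $(\psi_n)$ of independent standard Gaussian random variables. The use of \emph{Gaussians} rather than, say, Rademacher functions is essential, as explained below. I would then consider the linear map $T:\ell_2\to L^p(X\times[0,1])$ defined by $T(c)(x,t)=\phi(x)\Psi_c(t)$, where $\Psi_c\doteq\sum_n c_n\psi_n$, and show that its range is a closed $\ell_2$-isomorphic subspace of $L^p(X\times[0,1])$ lying, off the origin, inside $S_p(X\times[0,1])$.

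First I would check that $T$ is an into-isomorphism. For $c=(c_n)\in\ell_2$ the series defining $\Psi_c$ converges $\lambda$-a.e. and in every $L^r([0,1])$, its limit being a centered Gaussian of variance $\|c\|_2^2$; hence $\|\Psi_c\|_{L^p([0,1])}=\gamma_p\|c\|_2$, where $\gamma_p=\|g\|_{L^p}$ for a standard Gaussian $g$ (the Gaussian form of Khintchine's inequality). Factoring the integral over the product measure by Tonelli — legitimate since $\phi\otimes\Psi_c$ is supported on the $\sigma$-finite set $\{\phi\neq0\}\times[0,1]$, as $\phi\in L^p(X)$ — gives
$$
\|T(c)\|_{L^p(X\times[0,1])}=\|\phi\|_{L^p(X)}\,\gamma_p\,\|c\|_2 .
$$
Thus $T$ is a nonzero scalar multiple of an isometry, so it is bounded and bounded below, and its range is a closed subspace of $L^p(X\times[0,1])$ isomorphic (indeed, after rescaling, isometric) to $\ell_2$.

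It remains to show $T(c)\in S_p(X\times[0,1])$ for every $c\neq0$, and this is where Gaussians are indispensable: a \emph{nondegenerate} Gaussian satisfies $\Psi_c\neq0$ $\lambda$-almost everywhere, so $\int_J|\Psi_c|^q\,d\lambda>0$ for \emph{every} subinterval $J\subset[0,1]$; by contrast a Rademacher combination may vanish identically on an interval, destroying the property. Given a nonvoid open $U\subset X\times[0,1]$, choose a basic box $A\times J\subset U$ with $A$ nonvoid open in $X$ and $J$ a nonvoid open subinterval. For $p<q<\infty$,
$$
\int_{A\times J}|T(c)|^q\,d(\mu\times\lambda)=\Big(\int_A|\phi|^q\,d\mu\Big)\Big(\int_J|\Psi_c|^q\,d\lambda\Big)=\infty ,
$$
the product being $\infty\cdot(\text{positive})$; likewise $\operatorname*{ess\,sup}_{A\times J}|T(c)|=\infty$ for $q=\infty$. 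Since $\int_U|T(c)|^q\geq\int_{A\times J}|T(c)|^q$, it follows that $T(c)|_U\notin L^q(U)$ for every $p<q\leq\infty$, and as $T(c)\in L^p$ we conclude $T(c)\in S_p(X\times[0,1])$. Hence $T(\ell_2)\setminus\{0\}\subset S_p(X\times[0,1])$ and $T(\ell_2)$ is a closed $\ell_2$-copy, which is exactly $\ell_2$-spaceability.

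I expect the one genuinely delicate point to be precisely the almost-everywhere nonvanishing of $\Psi_c$ for \emph{arbitrary} nonzero $c\in\ell_2$: this is what forces the choice of Gaussians and guarantees that the factored integral never collapses to a spurious $\infty\cdot0$. The norm estimate and the factorization of the integrals are then routine.
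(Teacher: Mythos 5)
Your proof is correct, and it takes a genuinely different route from the paper's. The paper also uses a tensor map $h\mapsto \phi\otimes h$ with $\phi\in S_p(X)$ and verifies the same factorization of integrals over basic boxes $A\times J$ via Fubini (including the same remark on $\sigma$-finiteness of the support), but it builds the $[0,1]$-factor out of \emph{Rademacher} functions. Since a Rademacher combination can vanish on an interval, the paper must work around exactly the obstruction you identified: it first invokes a corollary of the Kitson--Timoney spaceability theorem to extract a closed infinite-dimensional subspace $F$ of the closed Rademacher span all of whose nonzero elements have infinitely many nonzero coefficients, and then proves a separate lemma (using the dyadic structure of the $r_n$) showing that such a sum is not a.e.\ zero on any nonvoid open subset of $[0,1]$. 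Your Gaussian substitute makes both steps unnecessary: rotation invariance gives that $\Psi_c$ is a nondegenerate Gaussian for \emph{every} nonzero $c\in\ell_2$, hence nonzero $\lambda$-a.e., so the entire range $T(\ell_2)$ works, and as a bonus $\|T(c)\|_p=\gamma_p\|\phi\|_p\|c\|_2$ yields an \emph{isometric} (after rescaling) copy of $\ell_2$, strictly more than the isomorphic copy the paper's route produces for $p\neq 2$. Two small caveats: your claim that Gaussians are ``indispensable'' is overstated --- the paper shows Rademachers do suffice once one passes to a suitable closed subspace; what is true is that Gaussians let you skip that reduction entirely. And two points you assert tersely deserve a line each: the standard realization of an i.i.d.\ standard Gaussian sequence as Borel functions on $([0,1],\lambda)$ (e.g.\ via dyadic digits and inverse distribution functions), and the $q=\infty$ case, which does not follow from the finite-$q$ case when $\mu(A)=\infty$ and needs the slicing argument: if $|\phi(x)\Psi_c(t)|\leq M$ a.e.\ on $A\times J$, then for a.e.\ $t\in J$ with $\Psi_c(t)\neq 0$ one gets $|\phi|\leq M/|\Psi_c(t)|$ a.e.\ on $A$, contradicting $\phi$ being nowhere $L^\infty$ (the paper's own proof is equally silent on $q=\infty$, so this is a shared, easily repaired omission rather than a defect of your approach).
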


\begin{theorem}

$S_p([0,1])$ is $\ell_2$-spaceable in $L^p[0,1]$, for each $1\leq p <\infty$. 

\label{thl201}
\end{theorem}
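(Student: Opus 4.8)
The plan is to \emph{deduce} Theorem \ref{thl201} from Theorem \ref{thl2} by transporting the copy of $\ell_2$ that Theorem \ref{thl2} produces in $L^p([0,1]^2)$ down to $L^p[0,1]$ along a measure isomorphism which, crucially, respects enough of the topology so that the nowhere-$L^q$ property is preserved. First I would check that Theorem \ref{thl2} is applicable with $X=[0,1]$: Lebesgue measure on $[0,1]$ is atomless, outer regular and has full support, and $[0,1]$ carries a countable $\pi$-base (the dyadic intervals), so Theorem \ref{mainth}(a) applies and in particular $S_p([0,1])\neq\emptyset$. Theorem \ref{thl2} (with $X=[0,1]$, whence $X\times[0,1]=[0,1]^2$ with the usual topology and Lebesgue measure) then yields a closed subspace $W\subseteq L^p([0,1]^2)$ with $W$ isomorphic to $\ell_2$ and $W\setminus\{0\}\subseteq S_p([0,1]^2)$.

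The transfer device is the \emph{binary-interleaving map} $\beta\colon[0,1]\to[0,1]^2$ defined (off the dyadic rationals, a null set) by sending $x=\sum_{k\geq1}d_k2^{-k}$ to the pair formed by its odd- and even-indexed digits, $\beta(x)=\bigl(\sum_j d_{2j-1}2^{-j},\,\sum_j d_{2j}2^{-j}\bigr)$. This $\beta$ is a measure isomorphism from $([0,1],\mathrm{Leb})$ onto $([0,1]^2,\mathrm{Leb}^2)$, so the composition operator $T\colon L^p([0,1]^2)\to L^p[0,1]$, $TF=F\circ\beta$, is a linear isometry onto its image; consequently $T(W)$ is a closed subspace of $L^p[0,1]$ isometric to $W$, hence isomorphic to $\ell_2$. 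By Definition \ref{defvspc} it then remains only to verify that $T(W)\setminus\{0\}\subseteq S_p([0,1])$, which will follow once I show $T$ carries $S_p([0,1]^2)$ into $S_p([0,1])$.

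The heart of the argument, and what I expect to be the main obstacle, is precisely this last inclusion: a generic measure isomorphism destroys the notion of ``nowhere $L^q$'', which is topological rather than measure-theoretic. The point that makes $\beta$ work is that it sends dyadic intervals to dyadic rectangles: fixing the first $2k$ binary digits of $x$ fixes the first $k$ digits of each coordinate, so for the dyadic interval $J$ of length $2^{-2k}$ determined by a digit prefix one has $\beta(J)=I_1\times I_2$, a dyadic square whose interior is a nonvoid open subset of $[0,1]^2$, and $\beta|_J$ pushes $\mathrm{Leb}\!\restriction_J$ onto $\mathrm{Leb}^2\!\restriction_{I_1\times I_2}$. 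Now fix $F$ with $F\setminus\{0\}\in S_p([0,1]^2)$, fix $p<q\leq\infty$, and let $V\subseteq[0,1]$ be any nonvoid open set; choose a dyadic interval $J\subseteq V$ with $\beta(J)=I_1\times I_2$. For $q<\infty$, the change of variables gives
$$
\int_V |TF|^q\,d\mathrm{Leb}\ \geq\ \int_J |F\circ\beta|^q\,d\mathrm{Leb}\ =\ \int_{I_1\times I_2}|F|^q\,d\mathrm{Leb}^2\ =\ \infty,
$$
the last equality because $F$ is nowhere $L^q$ on $[0,1]^2$ and $I_1\times I_2$ has nonvoid interior; the case $q=\infty$ is identical with the integral replaced by the essential supremum, using that $\beta\!\restriction_J$ is a measure isomorphism onto $I_1\times I_2$. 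As $V$, $q$ were arbitrary, $TF\in L^p\setminus\bigcup_{p<q\le\infty}\{g:g\text{ is somewhere }L^q\}$, i.e. $TF\in S_p([0,1])$. Hence $T(W)$ witnesses the $\ell_2$-spaceability of $S_p([0,1])$, completing the proof. (I would also remark that one could instead construct the copy of $\ell_2$ directly on $[0,1]$, realizing it as $\widetilde h\cdot\widetilde r_n$ for a fixed nowhere-$L^q$ function $\widetilde h$ depending on the odd digits and Rademacher functions $\widetilde r_n$ depending on the even digits; but the reduction above is cleaner because transporting the whole function $F$, rather than a factored form, avoids the nuisance of combinations that vanish on individual dyadic subintervals.)
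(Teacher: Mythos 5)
Your proof is correct, and it reaches the theorem by a more direct route than the paper does, though the underlying idea is the same digit-interleaving trick. The paper also deduces Theorem \ref{thl201} from Theorem \ref{thl2}, but it applies Theorem \ref{thl2} with $X=\{0,1\}^\N$ rather than $X=[0,1]$, and then transports the copy of $\ell_2$ along a chain of three $S_p$-preserving isometric isomorphisms $L^p(\{0,1\}^\N\times[0,1])\to L^p(\{0,1\}^\N\times\{0,1\}^\N)\to L^p(\{0,1\}^\N)\to L^p[0,1]$, built in Claims 1--4 from the binary-expansion map $g$ and the coordinate-interleaving map $\varphi$ on Cantor space. The point of that detour is stated explicitly in the paper just before its proof: a generic measure-preserving Borel isomorphism $[0,1]^2\to[0,1]$ destroys the \emph{nowhere} condition, whereas on $\{0,1\}^\N$ the interleaving $\varphi$ is an honest homeomorphism, so $S_p$-preservation is automatic there and the only delicate steps are the two passages $[0,1]\leftrightarrow\{0,1\}^\N$, handled by matching clopen cylinders $\langle s\rangle$ with dyadic intervals. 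Your map $\beta$ is exactly $\varphi$ conjugated by these identifications, and your contribution is to isolate the property that is actually needed: $\beta$ need not be a homeomorphism; it suffices that it carries a $\pi$-base of $[0,1]$ (the dyadic intervals of length $4^{-k}$) measure-isomorphically onto sets with nonvoid interior (dyadic squares), which gives the inclusion $T(S_p([0,1]^2))\subset S_p([0,1])$ in one computation, covering $q=\infty$ with the essential supremum. This makes your argument shorter and self-contained; what the paper's longer route buys is a stock of reusable $S_p$-preserving isometries (e.g.\ $L^p[0,1]\cong L^p(\{0,1\}^\N)$), though for Corollary \ref{coroll2} your method would serve equally well by induction. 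Two cosmetic points: ``fix $F$ with $F\setminus\{0\}\in S_p([0,1]^2)$'' should read ``fix a nonzero $F\in W$, so that $F\in S_p([0,1]^2)$''; and you should state explicitly, as you implicitly use, that $\beta$ pulls null sets back to null sets (immediate from $\beta_*\mathrm{Leb}=\mathrm{Leb}^2$), so that $TF$ is well defined on equivalence classes and $T$ is a linear isometry.
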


\begin{corollary}

$S_p([0,1]^n)$ is $\ell_2$-spaceable in $L^p([0,1]^n)$, for each $1\leq p <\infty$ and each $n\in\N$. 

\label{coroll2}
\end{corollary}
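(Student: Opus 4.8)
The plan is to deduce Corollary \ref{coroll2} from Theorem \ref{thl201} by induction on $n$, using Theorem \ref{thl2} as the inductive engine. The base case $n=1$ is precisely Theorem \ref{thl201}. For the inductive step, I would like to pass from $[0,1]^n$ to $[0,1]^{n+1}$ by writing $[0,1]^{n+1} = [0,1]^n \times [0,1]$ and applying Theorem \ref{thl2} with $X = [0,1]^n$ (equipped with $n$-dimensional Lebesgue measure, which is a positive Borel measure space).

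The key step is to verify the single hypothesis of Theorem \ref{thl2}, namely that $S_p([0,1]^n)$ is nonvoid. But this is immediate from the inductive hypothesis: if $S_p([0,1]^n)$ is $\ell_2$-spaceable, then in particular $S_p([0,1]^n) \cup \{0\}$ contains an infinite-dimensional subspace, hence certainly contains a nonzero element, so $S_p([0,1]^n) \neq \emptyset$. Thus Theorem \ref{thl2} applies and yields that $S_p([0,1]^n \times [0,1]) = S_p([0,1]^{n+1})$ is $\ell_2$-spaceable in $L^p([0,1]^{n+1})$, completing the induction. One should record that Lebesgue measure on $[0,1]^n$ is indeed a positive Borel measure, so that $([0,1]^n, \lambda_n)$ is a legitimate instance of $(X,\mu)$ in the statement of Theorem \ref{thl2}; this is a standard fact and requires no work.

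I would carry out the steps in the order: first state the induction on $n$ with base case from Theorem \ref{thl201}; second, assume the claim for $n$ and observe that $S_p([0,1]^n)$ is nonvoid; third, invoke Theorem \ref{thl2} with $X=[0,1]^n$ to conclude the claim for $n+1$.

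The only conceptual point worth flagging is that there is essentially no obstacle here: the corollary is a clean bootstrapping of the two-space result, and the entire content is bundled into Theorem \ref{thl2}. The one thing to be careful about is the canonical identification of $L^p([0,1]^n \times [0,1])$ with $L^p([0,1]^{n+1})$ (via Fubini/product-measure theory) so that the set $S_p([0,1]^n \times [0,1])$ really coincides with $S_p([0,1]^{n+1})$ under this identification; this is routine but should be mentioned so that the transfer of $\ell_2$-spaceability is legitimate.
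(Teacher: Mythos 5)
Your proof is correct and is exactly the argument the paper intends: the authors simply remark that Corollary \ref{coroll2} ``follows easily by induction from Theorems \ref{thl2} and \ref{thl201},'' which is precisely your induction with base case Theorem \ref{thl201} and inductive step via Theorem \ref{thl2} applied to $X=[0,1]^n$. Your extra care in noting that $\ell_2$-spaceability gives nonvoidness of $S_p([0,1]^n)$ and that $L^p([0,1]^n\times[0,1])$ identifies canonically with $L^p([0,1]^{n+1})$ fills in the routine details the paper leaves implicit.
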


Note that Corollary \ref{coroll2} follows easily by induction from Theorems \ref{thl2} and \ref{thl201}. Note also that Theorem \ref{thl201} is another improvement of Theorem \ref{quasi01} (for the $p\geq 1$ case), in a different direction if we compare to the improvement provided by Theorem \ref{mainth}(a).

To prove Theorem \ref{thl2}, we shall need some auxiliary results. The first one is a  corollary from the following: 

\begin{theorem}[Kitson, Timoney \cite{timoney}, Theorem 3.3]

Let $(E_n)$  be a sequence of Banach spaces and $F$ be a Fr\'echet space. Let $T_n:E_n\rightarrow F$ be continuous linear operators and $W\doteq span\{\bigcup_nT_n(E_n)\}$.

If $W$ is not closed in $F$, then $F\setminus W$ is spaceable.

\label{thKT}
\end{theorem}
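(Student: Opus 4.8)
\medskip
\noindent\textit{Proof proposal.} The plan is to show that non-closedness of $W$ forces $W$ to be a meager subset of its closure $Y\doteq\overline{W}$ that is covered by an increasing sequence of closed, bounded, nowhere dense sets, and then to run a gliding-hump construction inside the Fr\'echet space $Y$ producing a closed infinite-dimensional subspace meeting $W$ only at $0$.

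First I would replace the countable family by its partial sums. For each $N$ let $S_N$ be the operator $(x_1,\dots,x_N)\mapsto\sum_{n\le N}T_nx_n$ from the $\ell_1$-direct sum $E_1\oplus\cdots\oplus E_N$ into $F$; this is a continuous operator from a Banach space with range $R_N=\sum_{n\le N}T_n(E_n)$, and $W=\bigcup_N R_N$. Writing $B_N$ for the closed unit ball of the domain and $C_{N,m}\doteq\overline{m\,S_N(B_N)}$ (closure in $F$), each $C_{N,m}$ is closed, bounded, symmetric, and $W\subseteq\bigcup_{N,m}C_{N,m}$. The key point is that every $C_{N,m}$ is nowhere dense in $Y$: if some $C_{N,m}$ had nonempty interior in $Y$ then, being symmetric and convex, it would be a bounded neighbourhood of $0$ in $Y$, so $\overline{S_N(B_N)}$ would be a neighbourhood of $0$; by the open mapping theorem for the continuous operator $S_N$ into the Fr\'echet space $Y$ this makes $S_N$ surjective with closed range, i.e. $R_N=Y=\overline{W}$, whence $W=\overline{W}$ --- contradicting that $W$ is not closed. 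Re-indexing and taking finite unions $C_j\doteq\bigcup_{N,m\le j}C_{N,m}$, I obtain an increasing sequence $(C_j)$ of closed, bounded, symmetric, nowhere dense subsets of $Y$ with $0\in C_j$ and $W\subseteq\bigcup_j C_j$. Since $Y$ is a closed subspace of the Fr\'echet space $F$ it is itself Fr\'echet, hence Baire, and it is infinite dimensional (otherwise $W$ would be closed).

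It therefore suffices to produce a closed infinite-dimensional subspace $Z\subseteq Y$ with $Z\cap\bigcup_j C_j=\{0\}$, for then $Z\setminus\{0\}\subseteq Y\setminus W\subseteq F\setminus W$ witnesses spaceability. To build $Z$ I would run a gliding-hump argument. Fix an increasing sequence of seminorms $(\|\cdot\|_k)$ generating the topology of $Y$; since each $C_j$ is bounded, $b_{j,k}\doteq\sup_{c\in C_j}\|c\|_k<\infty$ for all $j,k$. Using the nowhere-density of the $C_j$ together with a Mazur-type selection that keeps the partial-sum projections uniformly bounded, I would inductively choose vectors $z_j\in Y$ and continuous biorthogonal functionals $z_j^*$ (so $z_i^*(z_l)=\delta_{il}$), normalized at a strictly increasing scale of seminorms and with geometrically growing separation, so that every nonzero $z=\sum_i a_iz_i$ in $Z\doteq\overline{\mathrm{span}}\{z_j\}$ has a seminorm forced above every bound $b_{j,k}$ compatible with $z\in C_j$. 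Concretely, if $j_0$ is the first index with $a_{j_0}\neq0$, the basis-type estimates bound $|a_{j_0}|$ from below by a fixed seminorm of $z$, while the diagonal normalization makes that seminorm exceed $b_{j,k}$ unless $z=0$; hence $z\notin C_j$ for every $j$, so $Z\cap W=\{0\}$.

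The main obstacle is exactly this last point: it is routine to avoid each single bounded nowhere dense set with finite linear combinations, but spaceability requires that the entire closed span --- including genuine infinite combinations --- avoid all the $C_j$ simultaneously. This is where both properties of the $C_j$ are essential and must be balanced in the induction: nowhere-density lets me keep choosing a genuinely new direction at each step while preserving the basis-type estimates, and boundedness (finiteness of the $b_{j,k}$) is what lets a single well-chosen seminorm estimate rule out membership of a limit vector in any $C_j$. That boundedness is indispensable is clear from the fact that a dense hyperplane is a meager subspace whose complement is \emph{not} spaceable, so meagerness alone cannot suffice; the operator-range hypothesis is precisely what guarantees that the covering sets are bounded. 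Verifying that the normalization of the $z_j$ can be chosen to dominate all the $b_{j,k}$ at once while keeping the partial-sum projections controlled is the technical heart of the argument.
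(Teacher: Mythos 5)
The paper does not actually prove this statement---it is quoted from Kitson--Timoney \cite{timoney}---so your proposal must be judged against the cited proof and on its own terms. The first half of your argument is sound and is essentially the standard reduction: writing $W=\bigcup_N R_N$ with $R_N=S_N(E_1\oplus\cdots\oplus E_N)$, covering $W$ by the closed, bounded, absolutely convex sets $C_{N,m}=\overline{m\,S_N(B_N)}$, and using the Schauder (almost-open implies open) half of the open mapping theorem inside $Y\doteq\overline{W}$ to show each $C_{N,m}$ is nowhere dense in $Y$: interior would force $R_N=Y=\overline{W}$, contradicting non-closedness. Your side remark that boundedness of the covering sets is indispensable (a dense hyperplane is meager with non-spaceable complement) is also correct.

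The gap is in the final avoidance mechanism, and it is fatal as stated. You propose to rule out $0\neq z\in Z\cap C_j$ by arranging that every nonzero $z\in\overline{\mathrm{span}}\{z_i\}$ has some seminorm exceeding the bounds $b_{j,k}=\sup_{c\in C_j}\|c\|_k$. Seminorms are homogeneous and $Z$ is a linear subspace, so no normalization of the $z_i$ can achieve this: if $z\in Z$ is nonzero, so is $tz$ for every $t>0$, and all seminorms of $tz$ tend to $0$ as $t\downarrow 0$. In the Banach case---the case this paper actually uses, via Corollary \ref{propspanspac}---the failure is immediate: your scheme would require every nonzero $z\in Z$ to satisfy $\|z\|>b_j$, which no nontrivial subspace can satisfy. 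Note also that your argument uses only the inclusion $C_j\subseteq\{z:\|z\|_k\le b_{j,k}\mbox{ for all }k\}$, i.e.\ only boundedness; but since $0\in C_j$ and $C_j$ is closed and convex, $z\in C_j$ implies $tz\in C_j$ for all $t\in(0,1]$, so seminorm-size estimates alone can never separate small nonzero elements of $Z$ from $C_j$. What rescues the genuine proof is the scale-invariance of membership in $W$ itself: $0\neq z\in W$ if and only if $tz\in C_{N,1}$ for some $N$ and all small $t>0$, so one may normalize $z$ by its first nonzero coefficient (say $a_{j_0}=1$, extracted by the biorthogonal functionals), and the induction must then secure a positive \emph{distance}, in suitable seminorms, between such coefficient-normalized elements of $Z$ and each $C_j$---choosing $z_n$, via nowhere density, quantitatively far from $\mathrm{span}(z_1,\dots,z_{n-1})$ plus suitable multiples of $C_n$, with basis-constant control of the infinite tails. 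That distance-based, coefficient-normalized formulation is scale-correct; the absolute seminorm-size formulation you give is not, and this is precisely the technical heart you flagged but did not supply.
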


\begin{corollary}

Let $E$ be an infinite-dimensional Banach space and $V$ be the linear span of a sequence of elements of $E$. Then $E\setminus V$ is spaceable. 

In particular, if $E$ is a sequence space, the set of elements $x=(x_n)$ of $E$ such that $x_n\neq 0$ for infinitely many $n$ is spaceable. 

\label{propspanspac}
\end{corollary}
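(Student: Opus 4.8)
The plan is to derive this from the Kitson--Timoney theorem (Theorem~\ref{thKT}). Write $V=\mathrm{span}\{v_n:n\in\N\}$ for the sequence $(v_n)$ in question, and take $F=E$, which is Banach and hence Fr\'echet. For each $n$ let $E_n=\R$ (the scalar field, a one-dimensional Banach space) and define $T_n:E_n\to E$ by $T_n(\lambda)=\lambda v_n$. Each $T_n$ is continuous and linear, and $T_n(E_n)=\R v_n$, so that $W\doteq \mathrm{span}\{\bigcup_n T_n(E_n)\}=\mathrm{span}\{v_n:n\in\N\}=V$. By Theorem~\ref{thKT}, it therefore suffices to check that $V$ is not closed in $E$.

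The key observation is that an infinite-dimensional Banach space has uncountable Hamel dimension: if it had a countable Hamel basis, it would be a countable union of proper closed (hence nowhere dense) finite-dimensional subspaces, contradicting the Baire category theorem. Now suppose first that $V$ is infinite-dimensional. If $V$ were closed, it would be an infinite-dimensional Banach space spanned by the countable set $\{v_n\}$, so it would simultaneously have countable and uncountable Hamel dimension --- a contradiction. Hence $V$ is not closed, and Theorem~\ref{thKT} applies directly to give that $E\setminus V$ is spaceable.

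There remains the degenerate case in which $V$ is finite-dimensional (where $W=V$ is closed and Theorem~\ref{thKT} does not apply). Here one argues directly: since $V$ is finite-dimensional it is complemented, so we may write $E=V\oplus Z$ with $Z$ a closed infinite-dimensional subspace satisfying $Z\cap V=\{0\}$. Then $Z\setminus\{0\}\subset E\setminus V$, so $Z$ witnesses the spaceability of $E\setminus V$.

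Finally, for the ``in particular'' assertion, assume $E$ is a sequence space containing the standard unit vectors $e_n$, and set $V=\mathrm{span}\{e_n:n\in\N\}$, the subspace of finitely supported elements. An element $x=(x_n)\in E$ belongs to $V$ precisely when $x_n=0$ for all but finitely many $n$; equivalently, $E\setminus V$ is exactly the set of those $x$ with $x_n\neq 0$ for infinitely many $n$. Applying the first part of the corollary to this $V$ yields the claim. The only genuinely nontrivial step is the non-closedness of $V$, for which the Hamel-dimension argument above is the crux; everything else is a matter of correctly matching the hypotheses of Theorem~\ref{thKT} and disposing of the finite-dimensional case.
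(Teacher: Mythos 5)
Your proof is correct and takes essentially the same route as the paper: both invoke the Kitson--Timoney theorem (Theorem~\ref{thKT}) via finite-rank operators $T_n$ whose ranges span $V$ (the paper uses $T_n:\R^n\to E$, you use rank-one maps $\lambda\mapsto\lambda v_n$, an immaterial difference), with the non-closedness of $V$ as the crux. You are in fact slightly more careful than the paper, which leaves the Baire-category/Hamel-dimension argument for non-closedness implicit and tacitly assumes $V$ is infinite-dimensional (its reduction to a linearly independent spanning sequence breaks down otherwise), whereas you make that argument explicit and dispose of the finite-dimensional case by complementation.
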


\begin{proof} Note that, for the first part, it suffices to show that $E\setminus V$ is spaceable in the case where $E$ is the closed linear span of $(x_n)$ and $V$ is the linear span of $(x_n)$, for some linearly independent sequence $(x_n)$ in $E$. But in this case, defining $T_n:\R^n\rightarrow E$ by $T_n(\lambda_1,\dots,\lambda_n)\doteq \sum_{j=1}^{n}\lambda_jx_j$, we have that $V=span\{\bigcup_nT_n(\R^n)\}$. Since $V$ is not closed in $E$, we can apply Theorem \ref{thKT} and conclude the proof of the first part.

For the second part, just apply  the first part to $V=span\{e_n:\N\}$, where $\{e_n:\N\}$ is the canonical basis of $E$. 
\end{proof}


The sequence space we will be interested in will be $\ell_2$. Recall that, if $r_n$ are the Rademacher functions defined on $[0,1]$ by  $r_n(t) \doteq sign(\sin(2^n\pi t))$ and $0< p <\infty$, then $(r_n)$, as a sequence in $L^p[0,1]$, is equivalent to the canonical basis of $\ell_2$.

\begin{lemma}

Let $(a_n)$ be an element of $\ell_2$ having infinitely many nonzero $a_n$'s. Then  for all open $\emptyset\neq U\subset [0,1]$ we have that 
$(\textstyle{\sum} a_nr_n)|_U \not\equiv 0$, where $\textstyle{\sum} a_nr_n$ is a series in $L^p[0,1]$ $(0<p<\infty)$. 

\label{lemmarad}
\end{lemma}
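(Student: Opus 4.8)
The plan is to argue by contradiction: assume $f \doteq \sum a_n r_n$ vanishes a.e.\ on some nonempty open $U$, and deduce that all but finitely many $a_n$ must be zero, contradicting the hypothesis. The first reduction is to replace $U$ by a \emph{dyadic} subinterval, since every nonempty open $U \subset [0,1]$ contains some $I = [k2^{-N},(k+1)2^{-N})$; it then suffices to show that $f$ cannot vanish a.e.\ on such an $I$.

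The key structural tool will be the description of the Rademacher functions in terms of binary digits. Writing $t = \sum_{n\ge 1} d_n(t) 2^{-n}$ with $d_n(t)\in\{0,1\}$, one checks that $r_n(t) = (-1)^{d_n(t)}$ (indeed $r_n(t) = (-1)^{\lfloor 2^n t\rfloor}$ and $\lfloor 2^n t\rfloor \equiv d_n(t) \pmod 2$). From this, two facts follow on a dyadic interval $I$ of level $N$: for $n\le N$ the digit $d_n$ is constant on $I$, so $r_n|_I$ is a constant $\pm 1$; and for $k\ge 1$, if $\phi\colon I\to[0,1]$ is the affine bijection $\phi(t)=2^N t - k$, then $d_{N+k}(t)=d_k(\phi(t))$, whence $r_{N+k}|_I = r_k\circ\phi$. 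In other words, the tail of the Rademacher system on $I$, pulled back by $\phi$, is \emph{exactly} a fresh copy of the full Rademacher system on $[0,1]$.

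Assembling these, the pullback $(f|_I)\circ\phi^{-1}$ equals $c + \sum_{k\ge 1} a_{N+k} r_k$, where $c = \sum_{n=1}^N a_n r_n|_I$ is a constant. Since $(a_n)\in\ell_2$ and $(r_n)$ is orthonormal in $L^2[0,1]$, this series converges in $L^2[0,1]$, and its $L^2$ limit coincides as a measurable function with the $L^p$ limit (the $L^p$ and $L^2$ norms being comparable on the span of the Rademachers), so the computation may be carried out in $L^2$. If $f|_U\equiv 0$, then $c + \sum_k a_{N+k} r_k = 0$ in $L^2[0,1]$; taking the inner product with $r_j$ and using $\langle r_j, r_k\rangle = \delta_{jk}$ together with $\int_0^1 r_j = 0$ for $j\ge1$, we obtain $a_{N+j}=0$ for every $j\ge1$. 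Thus at most the first $N$ coefficients are nonzero, contradicting the assumption that infinitely many $a_n$ are nonzero.

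I expect the main obstacle to be a clean verification of the self-similarity relation $r_{N+k}|_I = r_k\circ\phi$; the binary-digit formula $r_n(t)=(-1)^{d_n(t)}$ makes this transparent and also explains why no stray signs appear. Once the local problem on $I$ has been transported to the global problem on $[0,1]$, orthogonality of the Rademacher system finishes the argument immediately, and the only remaining bookkeeping is the routine identification of the $L^p$ and $L^2$ sums.
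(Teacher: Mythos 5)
Your proof is correct and takes essentially the same approach as the paper's: both reduce to a dyadic interval $I$ of level $N$, observe that $\sum_{n\leq N} a_n r_n$ is constant on $I$, and rest the conclusion on the fact that the tail $\sum_{n>N} a_n r_n$ cannot be (a.e.) constant on $I$. The paper simply asserts that last fact, while your self-similarity relation $r_{N+k}|_I = r_k\circ\phi$ combined with orthogonality of the Rademacher system is a complete and correct verification of exactly that step.
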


\begin{proof} Let $(a_n)$ be an element of $\ell_2$ having infinitely many nonzero $a_n$'s, and let $U$ be a nonempty open subset of $[0,1]$. Let us denote 
$$
f\doteq \sum_{n=1}^\infty a_nr_n,\, f_{<j}\doteq \sum_{n=1}^{j-1} a_nr_n,\mbox{ and }
f_{\geq j} \doteq \sum_{n=j}^\infty a_nr_n.
$$
$U$ contains an interval of the form $I=[\frac{k}{2^{N}},\frac{k+1}{2^{N}}]$, for some $N\in\N$ and $k=0,\dots,N-1$. Note that $f_{<N}$ is constant in $I$, but since we have infinitely nonzero $a_n$'s, $f_{\geq N}$ is \emph{not} constant in $I$. Thus $f$ cannot be constant in $I$.
\end{proof}

\textit{Proof of Theorem \ref{thl2}.} By Corollary \ref{propspanspac}, there is a closed infinite-dimensional subspace $F$ of $\overline{span}\{r_n:n\in\N\}$ such that, for each nonzero element $\sum_{n=1}^\infty a_nr_n$ from $F$, we have that infinitely many elements from $(a_n)$ are nonzero. By Lemma \ref{lemmarad}, for each nonzero element $h$ of $F$ and each nonvoid open subset $U$ from $[0,1]$, we have $h|_U\not\equiv 0$. Note that, as a closed infinite-dimensional subspace of a space isomorphic to $\ell_2$, $F$ is isomorphic to $\ell_2$.

Let $f$ be a norm-one element from $S_p(X)$, and define $\Phi:F\rightarrow L^p(X\times [0,1])$ by
$$
\Phi\left(\sum_{n=1}^\infty a_n r_n\right)(x,t)\doteq f(x) \sum_{n=1}^\infty a_n r_n(t).
$$
Note that the support of $\Phi(h)$ is $\sigma$-finite for each $h\in F$, and $\Phi$ is clearly an isometric isomorphism onto its range. By Fubini's theorem, 
\begin{align*}
\left\|\Phi\left(\sum_{n=1}^\infty a_n r_n\right)\right\|_p^p &= \int_X\int_0^1\left|f(x) \sum_{n=1}^\infty a_n r_n(t)\right|^p dt\,dx\\
&= \int_0^1\left(\int_X |f(x)|^p\, dx\right)\left|\sum_{n=1}^\infty a_n r_n(t)\right|^p dt\\
& =\int_0^1\left|\sum_{n=1}^\infty a_n r_n(t)\right|^p dt = \left\|\sum_{n=1}^\infty a_n r_n\right\|_p^p,
\end{align*}
for all $\sum_{n=1}^\infty a_n r_n\in F$. It follows that $\Phi(F)$ is a $\ell_2$-isomorphic subspace from $L^p(X\times [0,1])$. 

It remains to show that $\Phi(F)\subset S_p(X\times [0,1])\cup \{0\}$. Let $\sum_{n=1}^\infty a_n r_n$ be a nonzero element from $F$, $U\times (a,b)$ be a nonvoid basic open subset of $X\times [0,1]$, and $p\leq q <\infty$. Then 
$$
\int_{U\times (a,b)}\left|\Phi\left(\sum_{n=1}^\infty a_n r_n\right)\right|^q = 
\int_a^b \left|\sum_{n=1}^\infty a_nr_n(t)\right|^qdt \int_U|f(x)|^qdx
$$ 
converges if $q=p$ (by Khinchine's inequality and since $f$ is $p$-integrable), and does not converge if $q>p$ (since the first factor is strictly positive by Lemma \ref{lemmarad} and $f$ is not $q$-integrable). This concludes our proof. 
\hfill{$\Box$}\\

Before proceeding to the proof of Theorem \ref{thl201}, we point out that it is not hard to prove, using Theorem \ref{thl2}, that $L^p[0,1]\setminus \bigcup_{q>p}L^q[0,1]$ is $\ell_2$-spaceable in $L^p[0,1]$, for $1\leq p <\infty$. In effect, recall that there exists a measure-preserving Borel isomorphism $\psi$ from $[0,1]^2$ onto $[0,1]$, which in turn induces an isometric isomorphism  $\Psi$ from $L^p([0,1]^2)$ onto $L^p[0,1]$, defined by $\Psi(f)\doteq f\circ \psi^{-1}$. It is easy to verify that, for each $f\in L^p([0,1]^2)$ and each $q>p$, $f$ is $q$-integrable if and only if $\Psi(f)$ is $q$-integrable; in particular, all functions in $\Psi(S_p([0,1]^2))$ are \emph{not} $q$-integrable for $q>p$, and the claim follows. But the \emph{nowhere} part is lost, since $\psi$ is not an homeomorphism. We need thus to provide a finer construction.\\

\textit{Proof of Theorem \ref{thl201}.} Let $\mu$ be the Lebesgue measure on $[0,1]$, that is, the unique Borel measure such that $\mu([k/2^n,(k+1)/2^n])=1/2^n$ for every $n\in\N$ and $k=0,1,\dots,2^n-1$. By $\lambda$ denote the Lebesgue measure on $\{0,1\}^\N$, that is, the unique Borel measure such that $\lambda(\langle s\rangle)=1/2^{|s|}$ for every finite sequence $s$ of zeros and ones where $|s|$ is the length of $s$ and $\langle s\rangle$ stands for the set of all $x\in\{0,1\}^\N$ such that $x(k)=s(k)$ for $k=1,2,\dots,|s|$. Let $g:\{0,1\}^\N\to[0,1]$ be given by
$g(x)=\sum_{n=1}^\infty\frac{x(n)}{2^n}.$ Note that $g^{-1}(k/2^n)$ consists of two elements $x,y$ such that $x$ is a binary representation of $k/2^n$ with $x(m)=0$ for $m>n$, and $y$ is a binary representation of $k/2^n$ with $x(m)=1$ for $m>n$. Moreover $g^{-1}(t)$ is a singleton if $t$ is not of the form $k/2^n$. 

\begin{claim}
$\mu(A)=\lambda(g^{-1}(A))$ for every Borel set $A$ in $[0,1]$ and 
$\lambda(B)=\mu(g(B))$ for every Borel set $B$ in $\{0,1\}^\N$. 
\end{claim}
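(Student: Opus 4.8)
The plan is to prove the two identities separately, deriving the second from the first.

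For the first identity, $\mu(A)=\lambda(g^{-1}(A))$, I would observe that $g$ is continuous for the product topology on $\{0,1\}^\N$ and hence Borel measurable, so the pushforward $\nu(A)\doteq\lambda(g^{-1}(A))$ is a well-defined Borel probability measure on $[0,1]$. Both $\mu$ and $\nu$ are finite, so by the standard uniqueness theorem for measures it suffices to check that they agree on a generating $\pi$-system (a family closed under finite intersections), for which I would take the half-open dyadic intervals $[k/2^n,(k+1)/2^n)$: these are pairwise nested-or-disjoint, hence form a $\pi$-system, and they generate the Borel $\sigma$-algebra. The key computation is that if $s$ is the length-$n$ binary string representing $k$, then $g(\langle s\rangle)=[k/2^n,(k+1)/2^n]$ while $\lambda(\langle s\rangle)=1/2^n$; comparing $g^{-1}([k/2^n,(k+1)/2^n))$ with $\langle s\rangle$ shows they differ only by the finitely many binary representations of the two endpoints, which are single points of $\{0,1\}^\N$ and therefore $\lambda$-null. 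Hence $\nu([k/2^n,(k+1)/2^n))=\lambda(\langle s\rangle)=1/2^n=\mu([k/2^n,(k+1)/2^n))$, and the uniqueness theorem yields $\nu=\mu$.

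For the second identity, $\lambda(B)=\mu(g(B))$, I would isolate the set where $g$ fails to be injective. Let $D\subset[0,1]$ be the countable set of dyadic rationals and $E\doteq g^{-1}(D)$ the countable set of eventually-constant sequences; then $\mu(D)=0$ and $\lambda(E)=0$, while $g$ restricts to a continuous \emph{bijection} between $\{0,1\}^\N\setminus E$ and $[0,1]\setminus D$. Given a Borel set $B$, I would first replace $B$ by $B\setminus E$, which changes neither $\lambda(B)$ (since $\lambda(E)=0$) nor, up to a $\mu$-null set, the image (since $g(B\cap E)\subseteq D$). On the Polish space $\{0,1\}^\N\setminus E$ the map $g$ is an injective Borel map, so by the Lusin--Souslin theorem $g(B\setminus E)$ is Borel; injectivity off $E$ gives $g^{-1}(g(B\setminus E))\setminus E=B\setminus E$, whence, applying the first identity to $A=g(B\setminus E)$,
\[
\mu\big(g(B\setminus E)\big)=\lambda\big(g^{-1}(g(B\setminus E))\big)=\lambda(B\setminus E)=\lambda(B).
\]
Finally $g(B\setminus E)\subseteq g(B)\subseteq g(B\setminus E)\cup D$ with $\mu(D)=0$, so $\mu(g(B))=\lambda(B)$.

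The main obstacle is the second identity rather than the first: because $g$ is two-to-one over the dyadic rationals, the image $g(B)$ of a general Borel set need not obviously be Borel, and $B\mapsto\mu(g(B))$ is not literally a measure (images of disjoint sets can overlap on $D$). Both difficulties are confined to the countable, hence null, sets $D$ and $E$, and the clean way to dispose of the measurability point is the Lusin--Souslin theorem on injective Borel images; once the bad null set is removed, everything reduces to the already-proved pushforward identity. If one prefers to avoid Lusin--Souslin, an alternative is to verify $\lambda(B)=\mu(g(B))$ first on the cylinders $\langle s\rangle$, where $g(\langle s\rangle)$ is an explicit dyadic interval, and extend by a monotone-class argument, again absorbing the endpoint overlaps into null sets.
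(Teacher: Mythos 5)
Your proof is correct, and on the second identity it is genuinely more careful than the paper's own argument. The paper simply verifies both equalities on the generating families --- $A=[k/2^n,(k+1)/2^n]$ with $g^{-1}(A)=\langle s\rangle$, and $B=\langle s\rangle$ with $g(B)=[k/2^n,(k+1)/2^n]$ --- and declares that ``it is enough to show this'' for such sets. For the first identity that is exactly the standard uniqueness argument you spell out: $A\mapsto\lambda(g^{-1}(A))$ is a genuine Borel probability measure, so agreement with $\mu$ on a generating $\pi$-system suffices, and your observation that $g^{-1}$ of a half-open dyadic interval differs from the cylinder $\langle s\rangle$ only by finitely many $\lambda$-null points handles the endpoint bookkeeping. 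For the second identity, however, the paper's ``it is enough'' conceals precisely the two issues you name: $B\mapsto\mu(g(B))$ is \emph{not} a measure (images of disjoint sets can meet inside the dyadic set $D$), and $g(B)$ is not obviously Borel, so one cannot extend from cylinders by a Dynkin or monotone-class argument as literally as for preimages. Your route --- strip off the countable $\lambda$-null set $E=g^{-1}(D)$ of eventually constant sequences, use injectivity of $g$ off $E$ (noting $g(B\setminus E)\cap D=\emptyset$, so $g^{-1}(g(B\setminus E))=B\setminus E$), invoke Lusin--Souslin for Borel-ness, and then pull the second identity back to the already-proved first one --- repairs this cleanly, and it is a different decomposition from the paper's direct verification on cylinders: the paper treats the two identities symmetrically, while you derive the second from the first. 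One small simplification is available: Lusin--Souslin can be avoided, because the digit map $g^{-1}\colon[0,1]\setminus D\to\{0,1\}^\N\setminus E$ is itself continuous (each binary digit is locally constant off $D$), so $g(B\setminus E)$ is the \emph{preimage} of $B\setminus E$ under this Borel map and hence Borel by elementary means; this is in effect your suggested alternative, without even needing the monotone-class step.
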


It is enough to show this for $A=[k/2^n,(k+1)/2^n]$ and $B=\langle s\rangle$. Let $s$ be a finite sequence of zeros and ones which is a binary representation of the number $k/2^n$. Then $|s|=n$ and $g^{-1}(A)=\langle s\rangle$. Thus $\lambda(g^{-1}(A))=\lambda(\langle s\rangle)=1/2^n=\mu(A)$.  

Let $n=|s|$ and define $k=s(n)+2s(n-1)+2^2s(n-2)+\dots+2^{n-1}s(1)$. Then $g(B)=g(\langle s\rangle)=[k/2^n,(k+1)/2^n]$. Thus $\mu(g(B))=\mu([k/2^n,(k+1)/2^n])=1/2^n=1/2^{|s|}=\lambda(B)$. Claim 1 is then proved. 

\begin{claim}
$F:L^p[0,1]\rightarrow L^p(\{0,1\}^\N)$ given by $F(f)\doteq f\circ g$ is an isometric isomorphism between $L^p[0,1]$ and $L^p(\{0,1\}^\N)$, with inverse $L:L^p(\{0,1\}^\N)\rightarrow L^p[0,1]$ given by 
$$
L(h)(t)\doteq \left\{ 
\begin{array}{l}
h(g^{-1}(t)), \mbox{if $t\in [0,1]$ is not of the form $k/2^n$};\\
h(x), \mbox{\small if $t=k/2^n$ and $x$ is the binary representation of $t$ with $x(m)=0,m>n$.}
\end{array} \right.
$$
Moreover, $F|_{L^q[0,1]}= L^q(\{0,1\}^\N)$ for each $q>p$. 
\end{claim}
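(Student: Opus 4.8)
The plan is to deduce everything from Claim 1, which says precisely that $g$ is measure-preserving, i.e. that the image measure $g_*\lambda$ equals $\mu$. The only tool I would invoke is the standard change-of-variables formula for image measures: for any Borel function $\phi\geq 0$ on $[0,1]$,
$$
\int_{\{0,1\}^\N}\phi\circ g\,d\lambda=\int_{[0,1]}\phi\,d(g_*\lambda)=\int_{[0,1]}\phi\,d\mu.
$$
Applying this with $\phi=|f|^r$ for an arbitrary exponent $r>0$ yields the master identity
$$
\int_{\{0,1\}^\N}|f\circ g|^r\,d\lambda=\int_{[0,1]}|f|^r\,d\mu,
$$
which simultaneously gives the $L^p$-isometry (take $r=p$) and the moreover part (take $r=q$).

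First I would check that $F$ is linear and well defined on equivalence classes. Linearity is immediate since $F(f)=f\circ g$. For well-definedness, if $f=0$ $\mu$-a.e. then $\{f\neq 0\}$ is $\mu$-null, and Claim 1 gives $\lambda(g^{-1}\{f\neq 0\})=\mu\{f\neq 0\}=0$; since $\{f\circ g\neq 0\}=g^{-1}\{f\neq 0\}$, we conclude $f\circ g=0$ $\lambda$-a.e. The master identity with $r=p$ then shows $\|F(f)\|_p=\|f\|_p$, so $F$ maps $L^p[0,1]$ isometrically into $L^p(\{0,1\}^\N)$.

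Next I would verify that $L$ is a genuine two-sided inverse, the only point where the non-injectivity of $g$ on the dyadic rationals intervenes. For $L\circ F$, computing pointwise: if $t$ is not of the form $k/2^n$ then $g^{-1}(t)=\{x\}$ and $L(F(f))(t)=(f\circ g)(x)=f(t)$, while if $t=k/2^n$ the definition of $L$ selects the representative $x$ with $x(m)=0$ for $m>n$ and again $L(F(f))(t)=f(g(x))=f(t)$; hence $L\circ F=\mathrm{id}$ identically. Conversely $F(L(h))(x)=L(h)(g(x))=h(x)$ whenever $g(x)$ is not dyadic or $x$ is the chosen zero-tailed representative; the only exceptions are the points $x$ with $x(m)=1$ for all large $m$ mapping onto a dyadic, which form a countable, hence $\lambda$-null, set. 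Thus $F\circ L=\mathrm{id}$ $\lambda$-a.e., and together with the isometry this shows $F$ is an isometric isomorphism with inverse $L$.

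Finally, for the moreover part, the master identity with $r=q$ gives $\int|f\circ g|^q\,d\lambda=\int|f|^q\,d\mu$, so $f\in L^q[0,1]$ if and only if $f\circ g\in L^q(\{0,1\}^\N)$, and the restriction of $F$ to $L^q[0,1]$ is again an isometry for the $L^q$ norm; hence $F(L^q[0,1])=L^q(\{0,1\}^\N)$. I expect no deep difficulty here, since everything rests on Claim 1 plus the image-measure change of variables; the main point requiring care is the bookkeeping confirming that the ambiguity of $g$ over the dyadic rationals lives on a $\lambda$-null set, so that $L$ genuinely inverts $F$ in the almost-everywhere sense.
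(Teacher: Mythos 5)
Your proposal is correct and takes essentially the same route as the paper: the paper derives the identity $\int_{[0,1]}|f|\,d\mu=\int_{\{0,1\}^\N}|f\circ g|\,d\lambda$ from Claim 1 by hand (characteristic functions, then step functions, then general $f\in L^1$), which is precisely the image-measure change-of-variables formula you cite, and it likewise disposes of $F\circ L=\mathrm{id}$ by noting that the exceptional points are the countably many eventually-$1$ sequences, a $\lambda$-null set since $\lambda$ is continuous. The differences are purely presentational—your explicit checks of well-definedness on equivalence classes, of $L\circ F=\mathrm{id}$ pointwise, and of surjectivity onto $L^q(\{0,1\}^\N)$ via $L$ spell out steps the paper leaves implicit, but introduce no new idea.
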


If $A$ is a Borel set in $[0,1]$ and $f=\chi_A$ is the characteristic function of a set $A$, then by Claim 1 we have
\begin{eqnarray}
\int_{[0,1]}f d\mu=\mu(A)=\lambda(g^{-1}(A))=\int_{\{0,1\}^\N} \chi_{g^{-1}(A)} d\lambda=\int_{\{0,1\}^\N} f\circ g d\lambda.
\label{gggg}
\end{eqnarray}
It is easily seen that (\ref{gggg}) also holds if $f$ is a step function, and it follows that 
$$
\int_{[0,1]}\vert f\vert d\mu=\int_{\{0,1\}^\N}\vert f\circ g\vert d\lambda.
$$ 
holds for each $f\in L^1[0,1]$. It follows easily that $\Vert f\Vert_p=\Vert f\circ g\Vert_p$, for $f\in L^p[0,1]$. This shows that $F$ is norm-preserving and that $F|_{L^q[0,1]}= L^q(\{0,1\}^\N)$ for each $q>p$. 

It is clear that $L$ is a left inverse for $F$. Note that, for a given $h\in L^p(\{0,1\}^\N)$,  $F(L(h))$ eventually differs from $h$ on a countable set of elements $x\in\{0,1\}^\N$ with $x(m)=1$ for almost every $m$. Since $\lambda$ is a continuous measure, then $\lambda(\{x\in\{0,1\}^\N:h(x)\neq F(L(h))(x)\})=0$. This means that $h$ and $F(L(h))$ are the same element of $L^p(\{0,1\}^\N)$.  Thus Claim 2 is proved.

\begin{claim}
$F(S_p([0,1]))=S_p(\{0,1\}^\N)$ and $L(S_p(\{0,1\}^\N))=S_p([0,1])$.
\end{claim}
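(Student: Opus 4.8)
The plan is to exploit what Claims 1 and 2 already give us: the map $g$ transports $\lambda$ to $\mu$, and $F(f)=f\circ g$ is an isometric isomorphism with inverse $L$. Since $L=F^{-1}$ at the level of $L^p$-classes, the two asserted equalities are equivalent --- applying $L$ to both sides of $F(S_p([0,1]))=S_p(\{0,1\}^\N)$ yields $S_p([0,1])=L(S_p(\{0,1\}^\N))$. Hence it suffices to establish the two inclusions $F(S_p([0,1]))\subseteq S_p(\{0,1\}^\N)$ and $L(S_p(\{0,1\}^\N))\subseteq S_p([0,1])$; combined with $F\circ L=L\circ F=\mathrm{id}$ these give both equalities.

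The main tool is the localized change-of-variables formula coming from the pushforward identity $\mu(A)=\lambda(g^{-1}(A))$ of Claim 1: for every Borel set $A$, every measurable $f$, and every $0<q<\infty$,
$$
\int_A|f|^q\,d\mu=\int_{g^{-1}(A)}|f\circ g|^q\,d\lambda,
$$
with the convention that this also holds when both sides equal $+\infty$. This is the standard pushforward computation applied to the nonnegative function $|f|^q$: it holds for characteristic functions exactly as in (\ref{gggg}), extends to step functions, and then to all measurable $f$ by monotone convergence.

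The only genuine difficulty is that $g$ is \emph{not} a homeomorphism (it fails to be injective at the dyadic points), so open sets do not correspond perfectly under $g$. This is resolved by passing to basic open sets. First I would record the two-sided correspondence: for a dyadic interval $I=[k/2^n,(k+1)/2^n]$, with $s$ the length-$n$ binary word coding $k/2^n$, one has $g^{-1}(I)=\langle s\rangle$ up to a set of $\lambda$-measure zero, and conversely $g(\langle s\rangle)=I$. A short check shows moreover that $g^{-1}(I^\circ)$ differs from $\langle s\rangle$ only by the two boundary preimages, hence by a $\lambda$-null set. The key topological facts are then: (i) every nonvoid open $U\subseteq[0,1]$ contains a dyadic interval $I$, whose interior $I^\circ$ is a nonvoid open set with $g^{-1}(I^\circ)=\langle s\rangle$ up to $\lambda$-null boundary points; and (ii) every nonvoid open $V\subseteq\{0,1\}^\N$ contains a cylinder $\langle s\rangle$, and $g(\langle s\rangle)=I$ has nonvoid interior in $[0,1]$.

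With these in hand the two inclusions are symmetric. To prove $F(S_p([0,1]))\subseteq S_p(\{0,1\}^\N)$, take $f\in S_p([0,1])$; then $f\circ g\in L^p$ by the isometry, and given a nonvoid open $V\subseteq\{0,1\}^\N$ and $p<q<\infty$, choose a cylinder $\langle s\rangle\subseteq V$ as in (ii). Since $\int_{\langle s\rangle}|f\circ g|^q\,d\lambda=\int_{I^\circ}|f|^q\,d\mu=+\infty$ --- the last equality because $I^\circ$ is a nonvoid open set and $f\in S_p$ --- monotonicity forces $\int_V|f\circ g|^q\,d\lambda=+\infty$, so $(f\circ g)|_V\notin L^q(V)$. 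The reverse inclusion $L(S_p(\{0,1\}^\N))\subseteq S_p([0,1])$ runs identically, using (i) to pull a nonvoid open $U\subseteq[0,1]$ back to a cylinder. The case $q=\infty$ needs no separate argument: both spaces carry a \emph{finite} measure, so $L^\infty(W)\subseteq L^{q}(W)$ for any measurable $W$ and any finite $q$, whence failure of $L^{q}$-membership for a single finite $q\in(p,\infty)$ already forces failure of $L^\infty$-membership. I expect the only place demanding care to be the bookkeeping of the dyadic-boundary null sets, which is harmless precisely because they are $\lambda$-null; everything else is a direct transfer through the pushforward formula.
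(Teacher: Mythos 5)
Your proof is correct and follows essentially the same route as the paper's: both localize to the dyadic-interval/cylinder correspondence under $g$ and transfer the failure of $q$-integrability through the measure identity of Claim 1, the paper packaging this transfer as $F(f)\chi_{\langle s\rangle}=F\bigl(f\chi_{[k/2^n,(k+1)/2^n]}\bigr)$ combined with the $L^q$-isometry from Claim 2, rather than via your explicitly stated localized pushforward formula. Your extra bookkeeping of the $\lambda$-null boundary preimages and your explicit treatment of the $q=\infty$ case (via finiteness of both measures) spell out details the paper leaves implicit, but change nothing of substance.
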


Let $f\in S_p([0,1])$ and fix a basic set $\langle s\rangle$ in $\{0,1\}^\N$. Note that there exists a positive integer $k$ such that $g(\langle s \rangle )= [k/2^n,(k+1)/2^n]$, where $n=|s|$. Since 
$$
F(f)\chi_{\langle s \rangle}= (f\circ g)\chi_{\langle s \rangle} = (f\chi_{[k/2^n,(k+1)/2^n]})\circ g 
= F(f\chi_{[k/2^n,(k+1)/2^n]})
$$
and $f\chi_{[k/2^n,(k+1)/2^n}\not\in L^q[0,1]$, it follows by the previous claim that $F(f)\chi_{\langle s \rangle}\not\in L^q(\{0,1\}^\N)$. That means that $F(f)$ is nowhere $L^q$. We have then proved that $F(S_p([0,1]))\subset S_p(\{0,1\}^\N)$ and, using the previous Claim, that $L(S_p(\{0,1\}^\N))\supset S_p([0,1])$.

Now let $h\in S_p(\{0,1\}^\N)$ and fix a set $[k/2^n,(k+1)/2^n]$. Let $s$ be a finite set which is a binary representation of $k/2^n$. Since $h\chi_{\langle s\rangle}$ is not in $L^q$ for $q>p$, then 
$$
L(h\chi_{\langle s\rangle})=(h\circ g^{-1})\chi_{[k/2^n,(k+1)/2^n]}=L(h)\chi_{[k/2^n,(k+1)/2^n]}.
$$ Thus $L(h)$ is nowhere $L^q$ and the proof of Claim 3 is complete. \\

Given two positive Borel measure spaces $X$ and $Y$ and $0<p<\infty$, we shall say that an application $\varphi:L^p(X)\to L^p(Y)$ \emph{preserves $S_p$} if $\varphi (S_p(X))\subset S_p(Y)$. Claim 3 asserts in particular that both $F$ and $L$ preserve $S_p$.

\begin{claim}
There is an isometric isomorphism $G$ from $L^p(\{0,1\}^\N)$ onto $L^p(\{0,1\}^\N\times\{0,1\}^\N)$ which preserves $S_p$. 
\end{claim}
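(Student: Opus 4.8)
The plan is to realize $\{0,1\}^\N\times\{0,1\}^\N$ as a homeomorphic and measure-preserving copy of $\{0,1\}^\N$ — exactly the kind of identification used in Claim 1 — and then to transport functions by composition. Concretely, I would fix the ``interleaving'' bijection $\sigma:\{0,1\}^\N\to\{0,1\}^\N\times\{0,1\}^\N$ given by
$$
\sigma(x)=\big((x(1),x(3),x(5),\dots),(x(2),x(4),x(6),\dots)\big),
$$
which amounts to splitting the index set $\N$ into the odd and the even integers, each in bijection with $\N$. Since $\sigma$ is a continuous bijection between compact Hausdorff spaces, it is a homeomorphism; I would then define $G(h)\doteq h\circ\sigma^{-1}$.

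First I would check that $\sigma$ is measure-preserving, i.e. that it pushes $\lambda$ forward to the product measure $\lambda\times\lambda$ on $\{0,1\}^\N\times\{0,1\}^\N$. As in Claim 1, it suffices to verify this on basic clopen boxes $\langle s\rangle\times\langle t\rangle$: the preimage $\sigma^{-1}(\langle s\rangle\times\langle t\rangle)$ is the clopen set obtained by fixing $|s|$ odd-indexed and $|t|$ even-indexed coordinates, so its $\lambda$-measure is $2^{-(|s|+|t|)}=(\lambda\times\lambda)(\langle s\rangle\times\langle t\rangle)$. Extending from boxes to arbitrary Borel sets by the usual uniqueness-of-measure argument (exactly as in Claim 1), $\sigma$ — and likewise $\sigma^{-1}$ — is measure-preserving. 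It follows immediately that $G$ is a well-defined linear map on $L^p$-classes, that $\|G(h)\|_p=\|h\|_p$ by a one-line change of variables, and that $G$ is onto with inverse $k\mapsto k\circ\sigma$; hence $G$ is an isometric isomorphism.

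Finally I would show $G$ preserves $S_p$. Let $h\in S_p(\{0,1\}^\N)$ and let $V$ be any nonvoid open subset of $\{0,1\}^\N\times\{0,1\}^\N$. Because $\sigma$ is a homeomorphism, $\sigma^{-1}(V)$ is a nonvoid open subset of $\{0,1\}^\N$; and because $\sigma$ is measure-preserving, for every $q>p$
$$
\int_V|G(h)|^q\,d(\lambda\times\lambda)=\int_{\sigma^{-1}(V)}|h|^q\,d\lambda=\infty,
$$
the last equality since $h$ is nowhere $L^q$. Thus $G(h)|_V\notin L^q(V)$ for every nonvoid open $V$ and every $q>p$, so $G(h)\in S_p(\{0,1\}^\N\times\{0,1\}^\N)$ (it lies in $L^p$ by the isometry). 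Applying the same argument with $\sigma$ in place of $\sigma^{-1}$ shows that $G^{-1}$ preserves $S_p$ as well.

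The only real subtlety — and it is precisely the point flagged by the earlier remark about the measure-preserving but non-homeomorphic map $\psi$ — is that $\sigma$ must be a genuine \emph{homeomorphism}, not merely measure-preserving: it is the topological statement ``$\sigma^{-1}(V)$ is open and nonvoid'' that lets the \emph{nowhere} $L^q$ property transfer across $G$. Here this is automatic, since $\sigma$ is a continuous bijection of compact metric spaces, so I expect no serious obstacle; the bulk of the work is the routine verification of measure-preservation on cylinder sets, which parallels Claim 1 verbatim.
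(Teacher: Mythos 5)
Your proposal is correct and takes essentially the same route as the paper: your de-interleaving map $\sigma$ is exactly the inverse of the interleaving homeomorphism $\varphi((x(n)),(y(n)))=(x(1),y(1),x(2),y(2),\dots)$ used in the paper, and both arguments verify measure-preservation on the clopen boxes $\langle s\rangle\times\langle s'\rangle$ (yours via preimages, the paper's via images, each fixing $|s|+|s'|$ coordinates) before transporting the nowhere-$L^q$ property through the homeomorphism. If anything, your orientation of $G$ matches the claim's statement more literally than the paper's own definition $G(f)\doteq f\circ\varphi$, which runs in the opposite direction.
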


Let $\varphi:\{0,1\}^\N\times\{0,1\}^\N\to\{0,1\}^\N$ be defined by
$$
\varphi((x(1),x(2),x(3),\dots),(y(1),y(2),y(3),\dots))\doteq (x(1),y(1),x(2),y(2),x(3),y(3),\dots).
$$
It is well known that $\varphi$ is a homeomorphism of $\{0,1\}^\N\times\{0,1\}^\N$ and  $\{0,1\}^\N$. Fix two finite sequences $s$ and $s'$ of zeros and ones. Note that 
$$
\frac{1}{2^{|s|}}\frac{1}{2^{|s'|}}=\lambda(\langle s\rangle)\lambda(\langle s'\rangle)=\lambda\times\lambda(\langle s\rangle\times \langle s'\rangle)
$$
and 
$$
\lambda(\varphi(\langle s\rangle\times \langle s'\rangle))=\frac{1}{2^{|s|+|s'|}}.
$$
The last equality follows from the fact that the set $\varphi(\langle s\rangle\times \langle s'\rangle)$ is a subset of $\{0,1\}^\N$ such that exactly $|s|+|s'|$ of its coordinates are fixed. 

Using this we obtain that $\lambda\times\lambda(A)=\lambda(\varphi(A))$ for any Borel subset $A$ of $\{0,1\}^\N\times\{0,1\}^\N$. Then $G:L^p(\{0,1\}^\N\times\{0,1\}^\N)\to L^p(\{0,1\}^\N)$ defined by $G(f)\doteq f\circ\varphi$ has inverse given by $G^{-1}(h)=h\circ\varphi^{-1}$ and satisfies the desired properties. This completes the proof of Claim 4. \\

Mimicking the reasoning used to prove Claims 1--3, we can show that $T:L^p( \{0,1\}^\N\times [0,1])\to L^p(\{0,1\}^\N\times\{0,1\}^\N)$ defined by 
$$
T(f)(x,y)\doteq f(x,g(y))
$$
is an onto isometric isomorphism which preserves $S_p$. Hence, we have built the following chain of $S_p$-preserving isometric isomorphisms:
$$
L^p(\{0,1\}^\N\!\times \![0,1])\stackrel{T}{\longrightarrow} L^p(\{0,1\}^\N\!\times\!\{0,1\}^\N)
\stackrel{G}{\longrightarrow} L^p(\{0,1\}^\N)
\stackrel{L}{\longrightarrow} L^p[0,1].
$$
Theorem \ref{mainth} implies that $S_p(\{0,1\}^\N)$ is nonempty, and from Theorem \ref{thl2} we then obtain that $S_p(\{0,1\}^\N\times[0,1])$ is $\ell_2$-spaceable in $L^p(\{0,1\}^\N\times [0,1])$. The conclusion follows immediately. \hfill{$\Box$}


\section{Dense-lineability: proof of Theorem \ref{mainth}(d)}
\label{secdens}

We start by establishing some notation before proceeding to the proof of Theorem \ref{mainth}(d), which will also be via Lemma \ref{fact2}. First, recall that  a family $\{A_i:i\in I\}$ of infinite subsets of $\N$ is said to be \emph{almost disjoint} if $A_i\cap A_j$ is finite for any distinct $i,j\in I$. It is well known that there is a family of almost disjoint subsets of $\N$ of cardinality continuum. Let $\{A'_\alpha:\alpha<\mathfrak{c}\}$ be such family. 

Fix a sequence of integers $1=n_0<n_1<n_2<n_3<\dots$  such that 
$$
\sum_{i=n_k}^{n_{k+1}-1}\frac1i\geq 1,
$$ 
and consider $M_k\doteq\{n_k,n_k+1,\dots,n_{k+1}-1\}$.  Define, for each $\alpha < \mathfrak{c}$, $A_\alpha\doteq\bigcup\{M_k:k\in A'_\alpha\}$. Note that $\{A_\alpha:\alpha<\mathfrak{c}\}$ is an almost disjoint family and that 
$$
\sum_{i\in A_\alpha}\frac1i=\infty 
$$
for each $\alpha < \mathfrak{c}$. We shall fix the family  $\{A_\alpha:\alpha<\mathfrak{c}\}$ and use it in the following. \\

\textit{Proof of Theorem \ref{mainth}(d). } For a fixed Borel set $A$ of positive finite measure and  $\alpha<\mathfrak{c}$ we define a function  $h^\alpha_A$ as follows.  Let $\{A_{n,m}:n,m\in\N\}$ be a family of pairwise disjoint subsets of $A$ of positive measure such that $\mu(A_{n,m})\geq 2\mu(A_{n,m+1})$, and let $(r_n)$ be a strictly decreasing sequence of reals tending to $p$. Put 
\begin{eqnarray}
h^\alpha_n\doteq \sum_{m\in A_\alpha} a_{m,n}\chi_{A_{n,m}},
\label{halpha_n}
\end{eqnarray}
where $a^{r_n}_{m,n}\mu(A_{n,m})=1/m$. Then a similar argument as used in Lemma \ref{hA} leads us to that the $A$-supported, norm-one function
\begin{eqnarray}
h^\alpha_A\doteq\sum_{n=1}^\infty \frac{h^\alpha_n}{\|h^\alpha_n\|2^n}
\label{halphaA}
\end{eqnarray}
is in $L^p\setminus\bigcup_{q>p}L^q$.

As in the proof of Theorem \ref{fact1}, fix a basis $(U_n)$ for $X$, and let $N_n\subset U_n$ be a sequence of pairwise disjoint nowhere dense Borel sets satisfying $0<\mu(N_n)<1/2^n$. For each $\alpha<\mathfrak{c}$, by defining $h^\alpha_{N_{n}}$ as in (\ref{halphaA}) and putting
$$
f^\alpha\doteq\sum_{n=1}^\infty \frac{h^\alpha_{N_{n}}}{2^n}, 
$$
we obtain that  $f^\alpha\in S_p$ and has norm one.

Note that any ordinal number $\alpha<\mathfrak{c}$ is of the form $\beta+n$, where $\beta$ is a limit ordinal and $n=0,1,2,\dots$. Let $\{B_\beta:\beta<\mathfrak{c}\}$ be an indexation of all Borel subsets of $X$. Then the set   $\{(B_\beta,n):\beta<\mathfrak{c},n\in\N\}$ has cardinality $\mathfrak{c}$,
thus there is a bijection $(B_\beta,n)\mapsto \alpha(\beta,n)$ onto all ordinals less than $\mathfrak{c}$. 

Consider, for $\beta<\mathfrak{c}$ and $n\in\N$, the functions 
\begin{align}
g^{\beta,n}\doteq g^{\alpha(\beta,n)}\doteq\chi_{B_\beta}+\frac1n f^{\alpha(\beta,n)}.
\label{gbetan}
\end{align}

By our construction,  the linear span of $\{g^{\alpha(\beta,n)}:\beta<\mathfrak{c}, n\in\N\}$ is dense in the set of all simple functions on $X$, and therefore it is also dense in $L^p$. We will show show that any nontrivial linear combination of functions of the form (\ref{gbetan}) is in $S_p$. 
Let $(\beta_1,n_1),\dots,(\beta_k,n_k)$ be distinct and consider $b_1,\dots,b_k\in\R$ which are not all zero, and write
$$
g\doteq b_1 g^{\beta_1,n_1}+\dots +b_k g^{\beta_k,n_k}=(b_1\chi_{B_{\beta_1}}+\dots + b_k\chi_{B_{\beta_k}})
+\frac{b_1}{n_1}f^{\alpha(\beta_1,n_1)}+\dots +\frac{b_k}{n_k}f^{\alpha(\beta_k,n_k)}.
$$
Consider $\alpha_i\doteq \alpha(\beta_i,n_i)$, and note that $\alpha_1,\dots,\alpha_k$ are distinct ordinal numbers. We can then write 
\begin{align*}
g &= (b_1\chi_{B_{\beta_1}}+\dots + b_k\chi_{B_{\beta_k}})+\frac{b_1}{n_1}f^{\alpha_1}+\dots +\frac{b_k}{n_k}f^{\alpha_k}\\
&= (b_1\chi_{B_{\beta_1}}+\dots +b_k\chi_{B_{\beta_k}})
+\frac{b_1}{n_1}\sum_{n=1}^\infty \frac{h^{\alpha_1}_{N_{n}}}{2^n}+\dots +\frac{b_k}{n_k}\sum_{n=1}^\infty \frac{h^{\alpha_k}_{N_{n}}}{2^n}.
\end{align*}

Consider the family $\{A_{l,m}:l,m\in\N\}$ of pairwise disjoint subsets of $N_{n}$ of positive measure such that $\mu(A_{l,m})\geq 2\mu(A_{l,m+1})$, and construct $h^{\alpha_i}_{N_{n}}$ as in (\ref{halpha_n}), using these sets and the corresponding $a_{l,m}$. Consider $N\in\N$ such that the sets $C_1\doteq A_{\alpha_1}\setminus\{1,2,\dots,N\}, C_2\doteq A_{\alpha_2}\setminus\{1,2,\dots,N\},\dots,C_k\doteq A_{\alpha_k}\setminus\{1,2,\dots,N\}$ are disjoint; this is possible since $\{A_\alpha:\alpha<\mathfrak{c}\}$ is almost disjoint. Then we have
$$
h^{\alpha_i}_l=\sum_{m\in A_{\alpha_i}} a_m\chi_{A_{l,m}}=\sum_{m\in A_{\alpha_i}\bigcap\{1,\dots,N\}} a_m\chi_{A_{l,m}}+\sum_{m\in C_i} a_m\chi_{A_{l,m}},
$$    
and thus 
\begin{align*}
h^{\alpha_i}_{N_{n}}&=\sum_{l=1}^\infty \frac{h^{\alpha_i}_l}{||h^{\alpha_i}_l||2^l}=
\sum_{l=1}^\infty \frac{1}{||h^{\alpha_i}_l||2^l}\left(\sum_{m\in A_{\alpha_i}\bigcap\{1,...,N\}} a_m\chi_{A_{l,m}}+\sum_{m\in C_i} a_m\chi_{A_{l,m}}\right)\\
&= \sum_{l=1}^\infty \frac{1}{||h^{\alpha_i}_l||2^l}\sum_{m\in A_{\alpha_i}\bigcap\{1,...,N\}} a_m\chi_{A_{l,m}}+\sum_{l=1}^\infty \frac{1}{||h^{\alpha_i}_l||2^l}\sum_{m\in C_i} a_m\chi_{A_{l,m}}.
\end{align*}

Writing $w_i\doteq \sum_{l=1}^\infty \frac{1}{||h^{\alpha_i}_l||2^l}\sum_{m\in C_i} a_m\chi_{A_{l,m}}$ for each $i=1,\dots,k$,  
by our construction we have that each $w_i$ is in $L_p\setminus\bigcup_{q>p}L_q$ and $w_1,\dots,w_k$ have disjoint supports; more precisely, the support of each $w_i$ is $N_n^i\doteq\bigcup_{m}\bigcup_{l\in C_i}A_{l,m}$.  Note that  $span \{f^{\alpha_i}\chi_{\bigcup_{n}N_n^i}\}\subset S_p$. The fact that $g\in S_p$ follows then from the fact that adding a simple function to a function from $S_p$ results in a function from $S_p$. 

Since $L^p(X,\mu)$ is separable, it has dimension $\mathfrak{c}$, as does $span\{g^{\alpha(\beta,n)}:\beta<\mathfrak{c}, n\in\N\}$, which concludes our proof. \hfill{$\Box$}


\section{Algebrability: proof of Theorem \ref{mainth}(e)}
\label{secalg}

\textit{Proof of Theorem \ref{mainth}(e)} Let $(U_n)$ be a basis for $X$. Similarly to the construction held at the beginning of the Proof of Theorem \ref{mainth}(a), one can find pairwise disjoint nowhere dense Borel sets $N_n$ such that $N_n\subset U_n$ and $0<\mu(N_n)<\frac{1}{2^n}$. Using Lemma \ref{fact2}, we can find for each $n$ a pairwise disjoint family $(N_{n,j})_j$ of Borel subsets of $N_n$ satisfying 
$$
\mu(N_{n,j+1})\leq \frac{1}{j+1}\mu(N_{n,j}).
$$
Note that, for each $n,j$, we have $\mu(N_{n,j})\leq \frac{1}{j!2^n}$. Let $B_j\doteq \bigcup_n N_{n,j}$. Then all nonvoid open subsets of $X$ intercept each $B_j$ in non-null sets, and by the other hand $\mu(B_j)=\sum_n \mu(N_{n,j}) \leq \frac{1}{j!}$. 

Let $\{\theta_\alpha:\alpha<\mathfrak{c}\}$ be a set of real numbers strictly greater than $1$ such that the set $\{\ln(\theta_\alpha):\alpha<\mathfrak{c}\}$ is linearly independent over the rational numbers. For each $\alpha<\mathfrak{c}$, define 
$$
g_\alpha\doteq \sum_{j=1}^\infty\theta^j_\alpha\chi_{B_j}.
$$
For each $\alpha$ the series $\sum_j \frac{\theta_\alpha^{pj}}{j!}$ converges, thus each $g_\alpha\in L^p$, for each $\alpha<\mathfrak{c}$ and each $0<p<\infty$. 

Let us show that $\{g_\alpha:\alpha<\mathfrak{c}\}$ is a set of free generators, and the algebra generated by this set is contained in $\mathcal{G}\cup\{0\}$. It suffices to show that, for every $m$ and $n$ positive integers, for every matrix $(k_{il}:i=1,\dots, m,\, l=1,\dots, n)$ of non-negative integers with non-zero and distinct rows, for every $\alpha_1,\dots,\alpha_n <\mathfrak{c}$ and for every $\beta_1,\dots,\beta_m\in\R$ which do not vanish simultaneously, the function
\begin{align*}
g & \doteq\beta_1g_{\alpha_1}^{k_{11}}\dots g_{\alpha_n}^{k_{1n}}+\dots+ \beta_mg_{\alpha_1}^{k_{m1}}\dots g_{\alpha_n}^{k_{mn}}\\
& =
\sum_{j=1}^\infty(\beta_1(\theta_{\alpha_{1}}^{k_{11}}\dots\theta_{\alpha_{n}}^{k_{1n}})^j+\dots+ \beta_m(\theta_{\alpha_{1}}^{k_{m1}}\dots\theta_{\alpha_{n}}^{k_{mn}})^j)\chi_{B_j}
\end{align*}
is in $\mathcal{G}$. First, let us show that it is in $\bigcap_{0<p<\infty} L^p$. Fix $p$ and put, for each $i=1,\dots,m$, $\theta_i\doteq \theta_{\alpha_1}^{k_{i1}}\cdots \theta_{\alpha_n}^{k_{in}}$. Then
\begin{eqnarray}
\int |g|^p  \leq 
\int \left[\sum_{j=1}^\infty(|\beta_1|\theta_1^j+\dots+ |\beta_m|\theta_m^j)^p\chi_{B_j}\right] \leq
\sum_{j=1}^\infty\frac{Q(\theta_1^j,\dots,\theta_m^j)}{j!}, 
\label{qpwo}
\end{eqnarray}
where $Q:(x_1,\dots,x_m)\mapsto (|\beta_1|\theta_1^j+\dots+ |\beta_m|\theta_m^j)^p$. It is straightforward to find $C,b>0$ such that $Q(\theta_1^j,\dots,\theta_m^j)<C+b^j$ for all $j$. Thus the sum on the right handside of (\ref{qpwo}) converges and $g\in L^p$.

Since $\ln(\theta_i)=\ln(\theta_{\alpha_1}^{k_{i1}}\cdots \theta_{\alpha_n}^{k_{in}})=
k_{i1}\ln\theta_{\alpha_1}+\dots+k_{in}\ln\theta_{\alpha_n}$
and $\ln\theta_{\alpha_1},\dots,\ln\theta_{\alpha_n}$ are $\mathbb{Q}$-linearly independent, the numbers $\ln(\theta_1),\dots,\ln(\theta_m)$ are distinct. Then by the strict monotonicity of the logarithmic function we may assume that 
\begin{eqnarray}
\theta_1>\dots >\theta_m;
\label{thetas}
\end{eqnarray}
we also may assume $\beta_1\neq 0$. Then we can write
$$
g=\sum_{j=1}^\infty(\beta_1\theta^j_1+\dots+\beta_m\theta^j_m)\chi_{B_j}.
$$
From (\ref{thetas}) and since $\beta_1$ is assumed to be nonzero, we can find $j_0\in\mathbb{N}$ such that 
$$
|\beta_2|\theta^j_2+\dots+|\beta_m|\theta^j_m<\frac12 |\beta_1|\theta^j_1
$$
for all $j\geq j_0$. Then for those $j$
\begin{align*}
|\beta_1\theta^j_1+\dots+\beta_m\theta^j_m| &\geq |\beta_1|\theta^j_1 - \left| \beta_2 \theta^j_2 +\dots+ \beta_m\theta^j_m\right|\\
&\geq |\beta_1|\theta^j_1 - \left( |\beta_2| \theta^j_2 +...+ |\beta_m| \theta^j_m\right) >  \frac12 |\beta_1|\theta^j_1. 
\end{align*}
Since each nonvoid open subset of $X$ intercepts all $B_j$ in non-null sets, the inequality above shows that $g$ is nowhere essentially bounded. \hfill{$\Box$}

\subsection{Comments and open problems} 

As a Corollary from Theorem \ref{mainth}(e) we have the following:

\begin{corollary}

If $\mu$ is an atomless and outer regular positive Borel measure on $X$ with full support and $0<p<\infty$, then
$$
\mathcal{G}_p\doteq \left\{f\in L^p(\mu): \mbox{ $f$ is nowhere }L^\infty(\mu)\right\}
$$
is strongly $\mathfrak{c}$-algebrable. 

\end{corollary}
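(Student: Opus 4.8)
The plan is to derive this corollary directly from Theorem \ref{mainth}(e) by observing that $\mathcal{G}_p$ is, up to the choice of integrability exponent, the same object as $\mathcal{G}$, with the key point being that the \emph{algebra} constructed in the proof of (e) automatically lands inside $L^p$ for every finite $p$ simultaneously. First I would recall that the functions $g_\alpha = \sum_j \theta_\alpha^j \chi_{B_j}$ built in the proof of Theorem \ref{mainth}(e) were shown to lie in $\bigcap_{0<p<\infty} L^p$, and more importantly that \emph{every} element $g$ of the generated algebra was verified to be in $\bigcap_{0<p<\infty}L^p$ (this is precisely the content of the estimate \eqref{qpwo}, which holds for each fixed $p$) while simultaneously being nowhere essentially bounded. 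Thus the very same free algebra exhibited there is contained in $\mathcal{G}\cup\{0\}$, and since $\mathcal{G}\subset \mathcal{G}_p$ for each $0<p<\infty$, it is a fortiori contained in $\mathcal{G}_p\cup\{0\}$.

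The main structural observation I would emphasize is therefore the inclusion $\mathcal{G}\subset \mathcal{G}_p$: indeed $\mathcal{G}=\{f\in\bigcap_{0<q<\infty}L^q : f \text{ is nowhere }L^\infty\}$ and $\mathcal{G}_p=\{f\in L^p : f\text{ is nowhere }L^\infty\}$, so any $f\in\mathcal{G}$ belongs to $L^p$ (being in the intersection) and is nowhere $L^\infty$, hence $f\in\mathcal{G}_p$. Since Theorem \ref{mainth}(e) provides a $\mathfrak{c}$-generated free algebra $\mathcal{B}$ with $\mathcal{B}\subset\mathcal{G}\cup\{0\}$, we immediately get $\mathcal{B}\subset\mathcal{G}_p\cup\{0\}$, which is exactly the definition of $\mathcal{G}_p$ being strongly $\mathfrak{c}$-algebrable.

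I expect essentially no obstacle here: the work was already done at the level of generality needed, and the corollary is really just a matter of reading off that the construction in (e) never used more than $L^p$-membership for a single finite exponent when verifying the integrability of algebra elements. The only point requiring a word of care is confirming that the hypotheses invoked—$\mu$ atomless, outer regular, with full support, on a space $X$ with a countable $\pi$-base—match those under which Theorem \ref{mainth}(e) was proved; in the corollary the $\pi$-base hypothesis is implicit in the ambient standing assumptions on $X$, so I would simply note that the assumptions of (e) are in force and the construction applies verbatim. One might alternatively phrase the proof as: fix $p$, apply the construction of (e) to obtain the free algebra $\mathcal{B}$, and observe via \eqref{qpwo} that every nonzero element of $\mathcal{B}$ is $p$-integrable and nowhere essentially bounded, hence lies in $\mathcal{G}_p$; this is the self-contained version and is the one I would write if I wished to avoid relying on the stronger $\bigcap_q L^q$ claim.
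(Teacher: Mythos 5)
Your proposal is correct and matches the paper's (implicit) argument exactly: the corollary is stated there as an immediate consequence of Theorem \ref{mainth}(e), precisely via the inclusion $\mathcal{G}\subset\mathcal{G}_p$, since every element of the free algebra constructed in (e) lies in $\bigcap_{0<q<\infty}L^q\subset L^p$ and is nowhere essentially bounded. Your remark that the countable $\pi$-base hypothesis is inherited from the standing assumptions of Theorem \ref{mainth} is also the right reading of the corollary's statement.
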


It is a straightforward exercise for the reader to show, using a construction similar to the one used to prove Theorem \ref{mainth}(a), that $\mathcal{G}_p$ is spaceable in $L^p$. To finish this section we pose the following problem: \\

\Problem Does $\mathcal{G}_p\cup\{0\}$ admit \emph{dense} or \emph{closed} subalgebras of $L^p$?




\section{When are there nowhere $q$-integrable functions in $L^p$?}
\label{secfinal}

We conclude this work with a couple of remarks and questions on necessary/sufficient conditions on a positive Borel measure space $(X,\mu)$, so that there exist nowhere $q$-integrable Borel functions in $L^p(X)$. An obvious necessary condition is that $\mu$ has full support, so we will always assume that. It is not hard to see that it is also necessary that $X$ has the countable chain condition, as Proposition \ref{propnecccc} below shows. Recall that $X$ is said to have the countable chain condition (or ccc, in short) if any family consisting of open non-empty pairwise disjoint subsets of $X$ is countable. 

\begin{proposition}
Let $X$ be a topological space without the ccc, assume that $\mu$ positive Borel measure on $X$ with full support, fix $0<q<\infty$ and let $f:X\to\R$ be a Borel function. 

If $f\vert_U$ is not in $L^q(U)$ for any nonvoid open set $U$, then $f$ is not in $L^p(X)$ for any $0<p<\infty$.  
\label{propnecccc}
\end{proposition}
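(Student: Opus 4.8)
The plan is to argue by contradiction, exploiting the failure of the ccc to produce an uncountable disjoint family of open sets on each of which $f$ contributes a strictly positive amount to the $L^p$-integral, and then to observe that uncountably many positive contributions cannot sum to a finite number.

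First I would unwind the hypothesis that $X$ lacks the ccc: by definition this yields an uncountable family $\{U_\alpha:\alpha\in A\}$ of pairwise disjoint nonvoid open subsets of $X$. Since $\mu$ has full support we have $\mu(U_\alpha)>0$ for every $\alpha$; and since by assumption $f|_{U_\alpha}\notin L^q(U_\alpha)$, the restriction $f|_{U_\alpha}$ cannot be $\mu$-a.e.\ zero (a function vanishing a.e.\ on $U_\alpha$ trivially belongs to $L^q(U_\alpha)$). Consequently, for every $0<p<\infty$ and every $\alpha\in A$ we have $c_\alpha\doteq\int_{U_\alpha}|f|^p\,d\mu>0$, because $\int_{U_\alpha}|f|^p\,d\mu=0$ would again force $f=0$ a.e.\ on $U_\alpha$. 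Note that the nowhere-$L^q$ hypothesis is used \emph{only} to guarantee $f\not\equiv 0$ on each $U_\alpha$; the full strength of $\int_{U_\alpha}|f|^q\,d\mu=\infty$ is not needed.

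Next, fix $0<p<\infty$ and suppose toward a contradiction that $f\in L^p(X)$. Because the $U_\alpha$ are pairwise disjoint, for every finite subset $F\subset A$ we have $\sum_{\alpha\in F}c_\alpha=\int_{\bigcup_{\alpha\in F}U_\alpha}|f|^p\,d\mu\le\int_X|f|^p\,d\mu<\infty$, so the unordered sum $\sum_{\alpha\in A}c_\alpha$ is bounded by $\|f\|_p^p$. On the other hand, writing $S_k\doteq\{\alpha\in A:c_\alpha\ge 1/k\}$ we have $A=\bigcup_{k\in\N}S_k$; since $A$ is uncountable and a countable union of countable sets is countable, some $S_k$ must be uncountable, hence infinite. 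Summing $c_\alpha$ over $M$ distinct indices of such an $S_k$ gives a partial sum of at least $M/k$, which is unbounded as $M\to\infty$, so $\sum_{\alpha\in A}c_\alpha=\infty$. This contradicts $\|f\|_p^p<\infty$; therefore $f\notin L^p(X)$, and as $p$ was arbitrary the proposition follows.

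I expect no serious obstacle: the only points requiring care are the elementary measure-theoretic fact that an uncountable family of strictly positive reals has infinite total sum, and the observation that disjointness lets me bound the finite partial sums of the $\int_{U_\alpha}|f|^p$ by the global integral. The conceptual heart is simply that all the work is done by the uncountable disjoint family coming from the failure of the ccc, with the nowhere-$L^q$ assumption contributing nothing more than nonvanishing of $f$ on each piece.
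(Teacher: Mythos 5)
Your proof is correct and follows essentially the same route as the paper's: both extract an uncountable pairwise disjoint family of nonvoid open sets from the failure of the ccc, use the nowhere-$L^q$ hypothesis only to conclude that $f$ is not a.e.\ zero on each piece (so each $\int_{U_\alpha}|f|^p\,d\mu>0$), and then apply the same pigeonhole on the level sets $\{\alpha : c_\alpha \geq 1/k\}$ to force $\int_X|f|^p\,d\mu=\infty$. The only cosmetic difference is that you phrase it as a contradiction with $f\in L^p(X)$, whereas the paper concludes the divergence directly.
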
  

\begin{proof}
Let $(U_s)_{s\in S}$ be an uncountable family of pairwise disjoint non-empty open sets. Since $f\vert_{U_s}$ is not in $L^q(U_s)$ for any $U_s$, then $f$ does not vanish on $U_s$. Thus for each $0<p<\infty$ and each $s\in S$,  $\Vert f|_{U_s}\Vert_p>0$. Fix $0<p<\infty$. Since $S$ is uncountable, then at least one of the sets  
$$
S_n\doteq\left\{s\in S: \int_{U_s}|f|^pd\mu\geq\frac1n\right\}
$$  
is uncountable. Hence $\int_X|f|^pd\mu=\infty$.
\end{proof}

The next natural step is pose the following: \\

\Problem Suppose that $X$ has the ccc, and that $\mu$ is a positive Borel measure on $X$ with full support. Given $0<p<\infty$, does there exist a Borel $p$-integrable function $f:X\to\R$ which is nowhere $q$-integrable for $q>p$?\\

We provide a partial answer to the problem above, through a consistency result. Recall first that the product of two spaces with the ccc do not need to have the ccc, however this statement is independent of ZFC. Under Martin's axiom, the product of two ccc spaces has the ccc, but in some models of ZFC there exists a topological space called \emph{Suslin line}, which has the ccc but its square does not have the ccc (cf. \cite{kunen}).

\begin{theorem}
It is consistent with ZFC that there is a topological space $X$ satisfying the ccc such that, for any positive Borel measure $\mu$ on $X$ with full support and any $0<p<\infty$, there is no Borel function $f:X\to\R$ in $L^p(\mu)$ but nowhere $L^q(\mu)$ for $q>p$.
\label{consistth}
\end{theorem}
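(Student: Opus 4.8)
The plan is to realise $X$ as a \emph{Suslin line} $S$: a densely ordered, non-separable, ccc linearly ordered topological space, whose existence is consistent with ZFC (it follows, for instance, from $\diamondsuit$). This is exactly the right test space here: a countable $\pi$-base forces separability (pick one point in each basic set), so a Suslin line is precisely a ccc space that fails the standing hypothesis of Theorem \ref{mainth}, and exhibiting it establishes that ccc alone is insufficient. That $X=S$ has the ccc is built into the definition, so the entire content is to show that $S$ admits \emph{no} function of the desired type. I would reduce this to a single clean assertion: \emph{a densely ordered non-separable LOTS supports no finite Borel measure with full support.} To connect the two, suppose toward a contradiction that $\mu$ has full support and that $f\in L^p(\mu)$ is nowhere $L^q$. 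Since $\mu$ may be infinite (or not $\sigma$-finite), I pass to the auxiliary measure $\nu\doteq|f|^p\mu$, defined by $\nu(B)=\int_B|f|^p\,d\mu$. It is \emph{finite} because $f\in L^p$, and it still has \emph{full support}: for any nonempty open $U$, nowhere-$L^q$-ness forces $f$ to be non-null on $U$, whence $\nu(U)=\int_U|f|^p\,d\mu>0$. (Note this only uses that $f$ does not vanish a.e.\ on open sets, not the precise value of $q$.) Thus a hypothetical $f$ manufactures a finite full-support Borel measure on $S$, and it suffices to rule those out.

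The crux is an elementary ``cumulative distribution function'' argument that embeds $S$ order-isomorphically into $\R$. Define $\phi:S\to\R$ by $\phi(x)\doteq\nu(\{t\in X:t<x\})$. Monotonicity is immediate, and full support together with density of the order makes $\phi$ \emph{strictly} increasing: for $x<x'$ the half-open set $\{t:x\le t<x'\}$ contains the nonempty open interval $(x,x')$, so $\phi(x')-\phi(x)=\nu(\{t:x\le t<x'\})\ge\nu((x,x'))>0$. A strictly increasing map is an injective order-embedding, so $\phi$ is an order-isomorphism of $S$ onto $\phi(S)\subseteq\R$, with values confined to $[0,\nu(S)]$ precisely because $\nu$ is finite. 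Pulling back a countable dense set now produces a countable dense subset of $S$: for each pair of rationals $r<s$ with $\phi^{-1}((r,s))\neq\emptyset$ choose a point $d_{r,s}\in\phi^{-1}((r,s))$. Given any nonempty basic interval $(a,b)_S$, pick $x$ in it and rationals $\phi(a)<r<\phi(x)<s<\phi(b)$; then $\phi^{-1}((r,s))$ is nonempty and, by injectivity and monotonicity of $\phi$, contained in $(a,b)_S$, so $d_{r,s}\in(a,b)_S$. Hence $\{d_{r,s}:r,s\in\q\}$ is countable and dense, making $S$ separable — contradicting the choice of $S$.

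This contradiction finishes the proof, so no such $f$ (and in fact no finite full-support Borel measure) can exist on $S$, and the theorem holds in any model containing a Suslin line. The argument is deliberately elementary and, notably, does not invoke the failure of ccc in $S\times S$ nor any structure theory of measure algebras; the only features used are density of the order, finiteness of $\nu$, and non-separability. The one point requiring care — and the place where the hypotheses genuinely bite — is the reduction step: because the given $\mu$ need not be finite, the CDF argument cannot be applied to $\mu$ directly, and the whole scheme hinges on first distilling from $f$ the finite full-support measure $\nu$. I expect that packaging ``$S$ carries no finite full-support Borel measure'' as a standalone lemma, and then recording the verification that $\nu$ indeed has full support, will be the only steps needing a few lines of detail; the order-embedding itself is routine once set up.
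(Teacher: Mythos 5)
Your proof is correct, and it takes a genuinely different route from the paper's. The paper also starts from the consistency of a Suslin line $S$, but argues through the square: from a hypothetical $f\in L^p(\mu)$ nowhere $L^q$ it forms $\tilde f(x,y)=f(x)f(y)$ on $S^2$, checks via Fubini (using the $\sigma$-finiteness of the support of $f$) that $\tilde f\in L^p(\mu\times\mu)$ while $\int_{U\times V}|\tilde f|^q=\infty$ for all basic open boxes, and then gets a contradiction from Proposition \ref{propnecccc} together with the classical fact that the square of a Suslin line fails the ccc. You instead distill from $f$ the finite Borel measure $\nu=|f|^p\mu$, note it is strictly positive because nowhere-$L^q$-ness forbids $f$ from vanishing a.e.\ on any nonempty open set, and prove the standalone lemma that a densely ordered non-separable linearly ordered space carries no finite strictly positive Borel measure, via the cumulative-distribution embedding $\phi(x)=\nu(\{t<x\})$ and a rational pullback producing a countable dense set. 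That lemma and your verification of it are sound; the only detail left implicit is the case of basic open rays rather than bounded intervals, which reduces to bounded intervals since $S$ is densely ordered without endpoints. As for what each approach buys: yours is more elementary (no Fubini, no $\sigma$-finiteness bookkeeping, no combinatorics of ccc in products), it uses only that $f$ is non-null on every open set rather than the full nowhere-$L^q$ hypothesis, and it proves something strictly stronger --- a Suslin line supports no finite full-support Borel measure at all. The paper's route, on the other hand, deliberately runs through Proposition \ref{propnecccc} and the ccc-in-products phenomenon that frames Section \ref{secfinal} (under Martin's Axiom products of ccc spaces are ccc, so the Suslin-square failure is precisely the consistency ingredient being showcased). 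The two arguments are in fact cousins: a strictly positive finite measure on $X$ induces the strictly positive measure $\mu\times\mu$ on $X^2$ and hence ccc of the square, so your lemma can be read as replacing the combinatorial fact about Suslin squares with a direct separability argument.
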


\begin{proof}
It is consistent with ZFC that there exists a Suslin line $X$. Suppose that there is a Borel function $f:X\to\R$ in $L^p(\mu)$ but nowhere $L^q(\mu)$ for $q>p$. Let $\tilde{f}:X^2\to\R$ be defined by $\tilde{f}(x,y)=f(x)f(y)$. Clearly $\tilde{f}$ is Borel, and since $supp\, f$ is $\sigma$-finite, so is $supp\,\tilde{f}=(supp\,f)^2$. Fubini's theorem then implies that 
$$
\int_{X^2}\vert\tilde{f}\vert^pd(\mu\!\times\!\mu)=\Vert f\Vert_p^{2p},
$$ 
and for any two nonvoid open sets $U,V\subset X$  and $q>p$ we have that
$$
\int_{U\times V}\vert\tilde{f}\vert^qd(\mu\!\times\!\mu)=\int_U\vert f\vert^q\int_V\vert f\vert^q=\infty.
$$
Hence $\tilde{f}$ is a Borel function in $L^p(\mu\times\mu)$ but nowhere $L^q(\mu\times\mu)$ for $q>p$ and $\mu\times\mu$ is a positive Borel measure with full support. Since $X^2$ does not have the ccc, we get a contradiction.
\end{proof}

Finally we turn our attention to the presence of countable $\pi$-bases. First, note that there exist topological spaces $X$ with countable $\pi$-bases but admitting  no countable bases, and with positive Borel measures with full support defined on them: take for example the Sorgenfrey line (the set of real numbers with the topology generated by intervals of the form $[a,b)$) with the Lebesgue measure. The Sorgenfrey line $\R_S$ has a countable $\pi$-basis thus we can apply Theorem \ref{mainth} to show that $S_p(\R_S)$ is $\ell_p$-spaceable, but any basis  of the Sorgenfrey line has cardinality  $\mathfrak{c}$. 

It turns out that the presence of a countable $\pi$-basis in $X$ is also not necessary for the existence of nowhere $q$-integrable functions in $L^p(X)$. In fact, we have more.

\begin{theorem}
Let $X$ be a topological space with a countable $\pi$-basis. 
Suppose that $\mu$ is an atomless and outer-regular Borel probability measure with full support. Assume that $\kappa$ is an uncountable cardinal number. Let $Y=X^\kappa$ be the Tychonoff product of $\kappa$ many copies of $X$, and let $\lambda=\mu^\kappa$ be the product of $\kappa$ many copies of $\mu$. Then the $S_p(Y)$ is spaceable in $Y$.   
\end{theorem}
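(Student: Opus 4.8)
The plan is to deduce this from Theorem \ref{mainth}(a) by transporting the whole spaceable structure on $X$ up to $Y$ along a single-coordinate projection. The point worth stressing is that Theorem \ref{mainth} cannot be applied to $Y$ directly: for uncountable $\kappa$ the product $X^\kappa$ has no countable $\pi$-base, since any $\pi$-base must refine basic open sets constraining each of the $\kappa$ coordinate directions. Nevertheless, fix an index $0\in\kappa$, let $\pi_0:Y\to X$ be the corresponding projection, and define $\Phi:L^p(X)\to L^p(Y)$ by $\Phi(f)\doteq f\circ\pi_0$. Because $\mu$ and $\lambda=\mu^\kappa$ are probability measures, Fubini's theorem gives $\|\Phi(f)\|_p=\|f\|_p$, so $\Phi$ is a linear isometry; in particular it is injective and, being an isometry, maps a closed (hence complete) subspace onto a complete, therefore closed, subspace of $L^p(Y)$.

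The heart of the argument is to check that $\Phi$ preserves $S_p$, i.e. that $\Phi(f)\in S_p(Y)$ whenever $f\in S_p(X)$. So let $f\in S_p(X)$ be nonzero, let $W\subseteq Y$ be nonvoid open, and fix $p<q\le\infty$; I must show $\Phi(f)|_W\notin L^q(W)$. Since the product topology is generated by sets constraining only finitely many coordinates, $W$ contains a basic open set $B=\{y: y_i\in V_i,\ i\in F'\}$ with $F'\subseteq\kappa$ finite and each $V_i$ a nonvoid open subset of $X$, and it suffices to work over $B$. Splitting $Y=X^{F'}\times X^{\kappa\setminus F'}$ and applying Fubini, I distinguish whether $0\in F'$: if so, $\int_B|\Phi(f)|^q\,d\lambda=\big(\int_{V_0}|f|^q\,d\mu\big)\prod_{i\in F'\setminus\{0\}}\mu(V_i)$, and otherwise $\int_B|\Phi(f)|^q\,d\lambda=\big(\int_X|f|^q\,d\mu\big)\prod_{i\in F'}\mu(V_i)$. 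In both cases the leading factor is infinite because $f$ is nowhere $L^q$ (so $f\notin L^q(V_0)$, respectively $f\notin L^q(X)$), while the finite product is strictly positive because $\mu$ has full support, the infinitely many unconstrained coordinates contributing the factor $1$ thanks to the probability normalization. The case $q=\infty$ is identical, replacing the divergence of the integral by the fact that $\{y\in B:|\Phi(f)(y)|>M\}$ has positive $\lambda$-measure for every $M$, since $f$ is not essentially bounded on $V_0$ (respectively on $X$). Hence $\Phi(f)$ is nowhere $L^q$ on $Y$, as required.

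To conclude, I would invoke Theorem \ref{mainth}(a): since $(X,\mu)$ satisfies its hypotheses, $S_p(X)\cup\{0\}$ contains a closed, infinite-dimensional (indeed $\ell_p$-isometric) subspace $W_0$ of $L^p(X)$. Then $\Phi(W_0)$ is a closed, infinite-dimensional subspace of $L^p(Y)$ contained in $S_p(Y)\cup\{0\}$, which is exactly the spaceability of $S_p(Y)$. The main obstacle is the $S_p$-preservation step: the care needed is in the case split on the distinguished coordinate and, above all, in recognizing which hypotheses do the work. Atomlessness and outer regularity enter only through the application of Theorem \ref{mainth}(a) to the single factor $X$, whereas it is precisely the \emph{probability} normalization of $\mu$ that both makes the infinite product $\lambda$ a bona fide measure and ensures that the infinitely many free coordinates contribute a convergent (unit) factor in the Fubini computation; full support guarantees the positivity of the finitely many remaining factors. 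Measurability causes no trouble, as $\Phi(f)$ depends on a single coordinate and all the sets integrated over belong to the product $\sigma$-algebra.
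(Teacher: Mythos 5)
Your proof is correct and follows essentially the same route as the paper: both lift functions from $X$ to $Y$ along the projection onto a single coordinate and verify membership in $S_p(Y)$ via the same Fubini computation on basic cylinder sets, with full support giving positivity of the finitely many constrained factors and the probability normalization handling the rest. The only (harmless) difference is packaging: the paper lifts the specific disjointly supported basic sequence from the proof of Theorem \ref{mainth}(a) function by function, whereas you transport the whole closed subspace at once through the isometry $\Phi(f)=f\circ\pi_0$.
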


\begin{proof}
Let $(U_n)$ be a countable $\pi$-base in $X$.  By a construction used to prove Theorem \ref{mainth}(a) applied to $X$ and $\mu$, there is a norm-one basic sequence $f_1,f_2,\dots$ of elements of $S_p(X)$ with pairwise disjoint
supports, and each $f_i$ is of the form $\sum_{k=1}^\infty a_k\chi_{A_k}$ where $A_k$ are Borel subsets of $X$. For $(x_\alpha)_{\alpha<\kappa}\in X^\kappa$, put $\tilde{f}_i((x_\alpha)_{\alpha<\kappa})\doteq f_i(x_0)$. Then $(\tilde{f}_i)$ is a norm-one basic sequence with pairwise disjoint
supports. We need to show that each of them is in $S_p(Y)$. Note that $\tilde{f}_i$ is of the form $\sum_{k=1}^\infty a_k\chi_{\tilde{A}_k}$, where $\tilde{A}_k\doteq A_k\times\prod_{1\leq\alpha<\kappa}X$.

Let $V$ be an nonempty open subset of $Y$. We may assume that $V$ is of the form $\prod_{\alpha<\kappa}W_\alpha$ where $W_\alpha$ are nonempty open subsets of $X$ and there is finite set $F\subset\kappa$ such that $W_\alpha=X$ if $\alpha\in\kappa\setminus F$. Let $F_0=F\setminus\{0\}$. We have $V=W_0\times\prod_{1\leq\alpha<\kappa}W_\alpha$ and 
\begin{align*}
\int_{V}|\tilde{f}_i|^qd\lambda &=\int_{V}\sum_{k=1}^\infty |a_k|^q\chi_{\tilde{A}_k}d\lambda=
\sum_{k=1}^\infty |a_k|^q\lambda(\tilde{A}_k\cap V)\\
&=\sum_{k=1}^\infty |a_k|^q\lambda\left(({A}_k\cap W_0)\times\prod_{1\leq\alpha<\kappa}W_\alpha\right)
=\sum_{k=1}^\infty |a_k|^q\mu({A}_k\cap W_0)\prod_{\alpha\in F_0}\mu(W_\alpha)\\
&=\prod_{\alpha\in F_0}\mu(W_\alpha)\int_{W_0}\vert f_i\vert^q d\mu=\infty.
\end{align*}
Similarly we get that
$$
\int_{Y}\vert\tilde{f}_i\vert^pd\lambda=\int_X\vert f\vert^pd\mu<\infty,
$$
and our proof is concluded.
\end{proof}



\textbf{Acknowledgement:} we would like to thank prof. Gilles Godefroy for several comments that lead to improvements of this work.

\end{document}